\title[Bordered algebras and the wrapped Fukaya category]{Bordered algebras and the wrapped Fukaya category}
\author[Isabella Khan]{Isabella Khan}
\date{10 January 2025}
\begin{document}
	\begin{abstract}
		This paper establishes an isomorphism between endomorphism algebras from the wrapped Fukaya category of a type of punctured surface, and the class of $\A_{\infty}$-algebras related to bordered knot Floer homology, called \emph{star algebras}, which the author first constructed in~\cite{Kh2024}. By viewing the star algebras as $\A_{\infty}$-deformations of underlying associative algebras and making several calculations with Hochschild cohomology, we verify that the star algebras are unique with a given set of generators and basic $\A_{\infty}$-relations. We then make model calculations in order to establish that the endomorphism algebras have these generators and basic operations, so that the desired isomorphism follows.
	\end{abstract}
	\maketitle\blfootnote{The author was partially supported by a NSF Graduate Research Fellowship.}
	\tableofcontents
	
	\thispagestyle{empty}
	
	\section{Introduction}
	
		In order to effectively use knot and link invariants to solve geometric problems, we must first be able to readily compute these invariants in a variety of cases. There are several approaches to such computations. One, which is adopted in the construction of bordered knot Floer homology,~\cite{OzSzab17}, is to define bordered invariants which allow us to compute the original knot invariant as a tensor product of smaller, and often more easily computable, algebraic objects. Knot Floer homology,~\cite{OzSzabKnot},~\cite{Ras}, is a topological invariant of knots $K \subseteq S^3$ defined by choosing a particular Heegaard diagram corresponding to a given projection of $K$. Bordered knot Floer homology is defined by slicing this Heegaard diagram horizontally (that is, along planes perpendicular to the plane of the projection), and assigning algebraic objects to the different portions of the cut diagram in such a way that when we take the tensor product of these objects, we retrieve the knot Floer homology of $K$. Such a construction is possible because the moduli spaces of holomorphic disks, in the cut diagram, which go out to the boundary, are more or less reasonable. In the bordered knot Floer constructions, the behavior of these moduli spaces of disks going out to the boundary is then captured in the algebraic structure of the bordered algebras and modules that arise.

		In more general computational settings, however, the bordered  approach becomes more difficult, because it requires a way to express the more complicated behavior of moduli spaces of objects which go out to the boundary, which is not usually readily available. In these cases, another option is to construct an isomorphism or quasi-isomorphism between two structures, computing the more difficult one in terms of another, more easily computable one. To construct such equivalences between invariants, we first need to know which of the basic algebraic building blocks can be identified, in order to motivate our constructions. These basic identifications are particular interest when one or both of the related algebraic objects has a straightforward combinatorial formulation; by increasing the ease with which the two algebraic objects can be computed, such a combinatorial formulation has the potential to increase the number of possible applications in the future. 
 
 		\begin{wrapfigure}{r}{3.5cm}\vspace{-0.5cm}
			\includegraphics[width = 3.5cm]{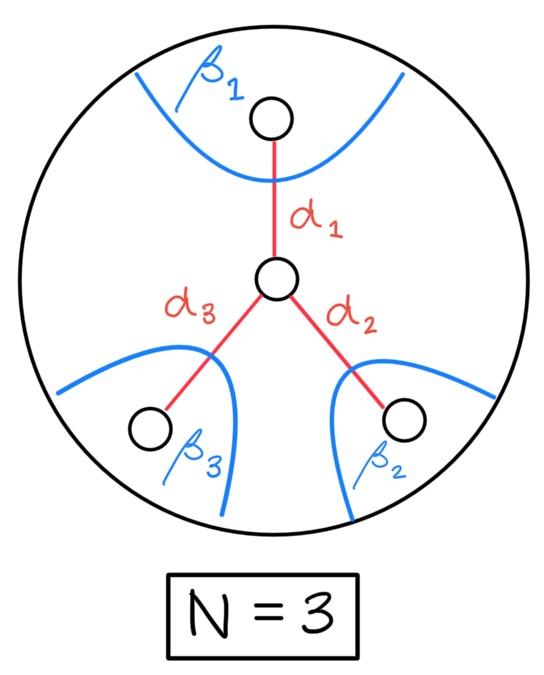} \vspace{-15pt}
	
			\caption{}\label{intro1} \vspace{-.5cm}
		\end{wrapfigure}

		This paper constructs an isomorphism connecting the combinatorially constructed bordered algebras derived from bordered knot Floer homology,~\cite{Kh2024}, and endomorphism algebras associated to the wrapped Fukaya category (see e.g.~\cite{AurFuk10},~\cite{LekPer11},~\cite{Sei08}), of a manifold with boundary associated to the construction of the algebras from~\cite{Kh2024}. In more detail, the bordered algebras in question are a pair of Koszul dual \emph{$\A_{\infty}$-algebras} called \emph{star algebras} (defined in Section~\ref{algs}, below) whose algebraic structure reflected the holomorphic curve theory of a particular class of bordered diagram called a \emph{star diagram}, pictured in Figure~\ref{intro1}. The star diagrams from Figure~\ref{intro1} can also be viewed as symplectic manifolds which are conical near the boundary, in which the red $\alpha$-arcs $\{\alpha_1, \ldots, \alpha_N\}$ and the blue $\beta$-arcs $\{\beta_1, \ldots, \beta_N\}$ can each be viewed as a set of Lagrangians. We can then construct endomorphism algebras having to do with the \emph{wrapped Fukaya category} for this set-up. With this background, the result is as follows. 

		\begin{theorem}\label{fukaya}
			Let $\alpha_1, \ldots, \alpha_N$, $\beta_1, \ldots \beta_N$ denote the $\alpha$- and $\beta$-arcs, respectively. Let $\W(M)$ denote the Fukaya category of the star diagram with $N$ boundary circles, as defined in Section~\ref{fukayaDef}. Let $\A$ and $\B$ be the $\A_{\infty}$-algebras for this star diagram, as constructed in Section~\ref{algs}. Then there exist $\A_{\infty}$-isomorphisms
			\begin{equation}\label{fukaya1}
				\A \cong \End_{\W(M)}(\alpha_1\oplus \cdots \oplus \alpha_N),
			\end{equation}
			and
			\begin{equation}\label{fukaya2}
				\B \cong \End_{\W(M)}(\beta_1 \oplus \cdots \oplus \beta_N).
			\end{equation}
		\end{theorem}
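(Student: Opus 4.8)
The plan is to follow the strategy sketched in the abstract: realize each star algebra as an $\A_{\infty}$-deformation of an explicit associative algebra, show by a Hochschild cohomology computation that such a deformation is pinned down by a finite package of data, and then extract exactly that data on the Fukaya side by direct model computations.

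\emph{Step 1: the underlying associative algebra.} First I would strip $\A$ of its higher products to obtain an associative algebra $\A_{0}$ (the pair $(\A,\mu_{2})$, or an associated graded thereof along the natural length/weight filtration), together with a distinguished generating set and the list of ``basic'' higher operations $\mu_{k}$ that appear in its $\A_{\infty}$-relations; symmetrically one obtains $\B_{0}$ from $\B$. By construction $\A$ is an $\A_{\infty}$-structure on $\A_{0}$ with $\mu_{1}=0$ and the prescribed $\mu_{2}$, and likewise for $\B$.

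\emph{Step 2: uniqueness via Hochschild cohomology.} $\A_{\infty}$-structures on $\A_{0}$ extending the given $\mu_{2}$ are Maurer--Cartan elements of the reduced Hochschild cochain complex of $\A_{0}$, and two of them are $\A_{\infty}$-isomorphic over $\A_{0}$ precisely when they are gauge equivalent. I would grade the Hochschild complex by the internal grading coming from the weight filtration and record that $\mu_{k}$ must have internal degree $2-k$. The key lemma is then a rigidity statement: for all $k$ beyond the order of the basic operations, the relevant graded piece of $HH^{k}(\A_{0})$ in internal degree $2-k$ vanishes (or is exhausted by coboundaries reachable by gauge transformations compatible with the basic operations). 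Granting this, an inductive obstruction argument shows that any two $\A_{\infty}$-structures on $\A_{0}$ sharing the same generators, the same $\mu_{2}$, and the same basic operations are $\A_{\infty}$-isomorphic; in particular $\A$ is characterized by this finite package of data, and the same holds for $\B$.

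\emph{Step 3: the Fukaya side, and conclusion.} Since $M$ is a surface that is conical near its $N$ boundary circles, $\W(M)$ is combinatorially accessible: after a cofinal wrapping of the $\alpha$-arcs, $\End_{\W(M)}(\alpha_{1}\oplus\cdots\oplus\alpha_{N})$ has a basis indexed by wrapped intersection points of the arcs with their images (equivalently, by Reeb chords / strand diagrams at the boundary), and the $\A_{\infty}$-operations $\mu_{k}$ count immersed $(k+1)$-gons with boundary on the wrapped arcs. I would (i) match these generators with those of $\A$, (ii) count immersed triangles to identify $\mu_{2}$ with the associative product of $\A_{0}$, and (iii) count the finitely many polygons needed to identify the basic higher operations with those of $\A$. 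The endomorphism algebra then carries exactly the characterizing package of Step 2, so \eqref{fukaya1} follows; \eqref{fukaya2} follows by the same argument with the $\beta$-arcs, or by invoking Koszul duality on both sides. I expect the main obstacle to be the Hochschild vanishing lemma of Step 2, since $\A_{0}$ is a path-algebra-with-relations of some intricacy and one must control $HH^{k}(\A_{0})$ in precisely the internal degrees $2-k$; the natural fallbacks are to exploit Koszulity of $\A_{0}$ (so $HH^{*}$ is computed from the Koszul dual $\B_{0}$) and to bound cochain internal degrees by a length argument, reducing the claim to a finite check. The Fukaya-side polygon counts of Step 3 are then only finitely many, and, while delicate to set up (choice of wrapping, perturbation data, and sign/orientation conventions), should be routine once the model is fixed.
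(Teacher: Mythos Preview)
Your proposal is correct and matches the paper's approach almost exactly: the paper defines $\A_{0},\B_{0}$, computes the relevant bigraded pieces $H^{n,-1}$ and $H^{n,-2}$ of (a unital subcomplex of) the Hochschild complex via the identifications $\tCob(\A_{0})\simeq\B_{0}$ and $\tCob(\B_{0})\simeq\A_{0}$ (precisely the ``Koszulity'' fallback you anticipate), deduces the uniqueness statement (Proposition~\ref{uni1}), and then does the model polygon counts (Lemmas~\ref{end1}--\ref{end4}) you outline in Step~3. The only small deviation is that the paper handles~\eqref{fukaya2} by repeating the argument with the $\beta$-arcs rather than by invoking Koszul duality on both sides.
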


		For example calculations demonstrating how to explicitly express operations from $\A$ and $\B$ in  terms of these endomorphisms, please see Lemmas~\ref{end1},~\ref{end2},~\ref{end3}, and~\ref{end4}, which are model calculations essential to the proof of Theorem~\ref{fukaya}. The figures in the proofs of these lemmas indicate how the result of Theorem~\ref{fukaya} provides another combinatorial (and geometric) way of representing the higher algebra operations. 

		The proof of Theorem~\ref{fukaya} is largely algebraic. First, we review the original construction of the $\A_{\infty}$-algebras $\A$ and $\B$ from~\cite{Kh2024}. The treatment here has two differences from the original construction. First, $\A$ and $\B$ slightly modified from the original algebras used in~\cite{Kh2024}, so that they are over a smaller ground ring, and do not include weighted operations. Second, we view $\A$ and $\B$ as $\A_{\infty}$-deformations of underlying associative algebras $\A_0$ and $\B_0$. In the second half of the paper, we use Lemma~\ref{uni2}, which says that the cobar algebra of $\A_0$ is $\B_0$, and the cobar algebra of $\B_0$ is $\A_0$, to compute the Hochschild cohomologies of the two associative algebras. This allows us to conclude that $\A$ and $\B$ are the unique $\A_{\infty}$-deformations of $\A_0$ and $\B_0$, respectively, with certain basic $\A_{\infty}$-operations (Proposition~\ref{uni1}). We then finish the proof with model calculations (Lemmas~\ref{end1} through~\ref{end4}) which show that the endomorphism algebras can be viewed as as a deformation of $\A_0$ and $\B_0$, respectively, with the desired higher operations.

		This paper should provide the basis for a planned future work which will involve the original Koszul dual \emph{weighted $\A_{\infty}$-algebras} discussed in~\cite{Kh2024}. In particular, the reader will note that the $\A_{\infty}$-algebras $\A$ and $\B$ discussed in this paper are actually stripped down versions of the original weighted algebras constructed and discussed in~\cite{Kh2024}, which include certain higher operations (see Propositions~\ref{alg4} and~\ref{alg8}), but no weighted operations. This is by design. There is, at the moment, no well-defined notion of the endomorphism algebras from Theorem~\ref{fukaya} which could be used for the weighted case; therefore a version of Theorem~\ref{fukaya} for the original Koszul dual weighted $\A_{\infty}$-algebras from~\cite{Kh2024} is not currently known. The author expects, however, that by viewing the original algebras from~\cite{Kh2024} as weighted deformations of the algebras $\A$ and $\B$ discussed in this paper, it will be possible to modify the construction of the endomorphism algebras from Theorem~\ref{fukaya} so as to  analogous equivalence for the original algebras.

		The paper is structured as follows. In Sections~\ref{defs} and~\ref{fukayaDef}, we review the basic definitions and constructions used in this paper, namely $\A_{\infty}$-algebras, Hochschild cohomology, and the wrapped Fukaya category. In Section~\ref{algs}, we review the construction of the underlying associative algebras $\A_0$ and $\B_0$. In Section~\ref{deformations}, we compute the Hochschild cohomology of $\A_0$ and $\B_0$ in order to verify $\A$ and $\B$ are the unique $\A_{\infty}$-deformations of $\A_0$ with the desired basic higher operations. Finally, in Section~\ref{proof}, we use model calculations in the wrapped Fukaya category setting to show that we can apply this uniqueness result to conclude Theorem~\ref{fukaya}.

	\begin{ackno}
		\emph{I would like to thank Peter Ozsv\'ath and Zolt\'an Szab\'o for a great deal of advice and support throughout the preparation of this paper. I would also like to thank Denis Auroux and Robert Lipshitz for many helpful comments, and for pointing out a mistake in an earlier draft of this work.} 
	\end{ackno}

	\section{Algebraic Definitions}\label{defs}
		The aim of this section is to introduce the basic algebraic concepts which will be used in the following sections. None of the definitions or results cited here are original to this paper, and relevant sources are cited in each case.

		\subsection{$\A_{\infty}$-algebras}
			
			Let $R$ be a ring. In this paper, $R$ will usually be $\F_2[V_0, V_{N + 1}]$ for some fixed $N$. An \emph{$\A_{\infty}$-algebra} $\A$ is an $R$-module equipped with maps $\mu_n: \A^{\otimes n} \to \A$ ($n \in \Z_{> 0}$) which satisfy the following conditions: for each $n \geq 0$ and $a_1, \ldots a_n$,
			\begin{equation}\label{def1}
				\sum_{1 \leq r \leq n} \sum_{k = 1}^{n - r + 1} \mu_{n - r + 1} (a_1, \ldots, a_{k - 1}, \mu_r(a_k, \ldots, a_{k + r - 1}), a_{k + r}, \ldots, a_n)  = 0
			\end{equation}
			The combined set of equalities captured by~\eqref{def1} are called the \emph{$\A_{\infty}$-relations for $\A$}. 

			\begin{remark}\label{def2}
				\emph{Note that this definition excludes both $\mu_0$ (curvature) operations, which we are assuming for the purposes of this paper to be identically zero, and any notion of \emph{weighted operations}, as discussed in~\cite{Kh2024} or~\cite{LOT2021}. This is by design: the main result of this paper deals with only the unweighted versions of the algebras from~\cite{Kh2024}, which have neither weighted operations nor non-vanishing $\mu_0$ operations. }
			\end{remark}

			Additionally, given an associative algebra $\A_0$ over $R$, an \emph{$A_n$-deformation} of $\A_0$ is an $\A_{\infty}$-algebra having the same elements and simple multiplication as $\A_0$, but is also equipped with (possibly vanishing) $\mu_1, \mu_3, \mu_4, \ldots, \mu_n$ satisfying the $\A_{\infty}$-relations~\eqref{def1}. 

		\subsection{The cobar complex}\label{CobDef}

		An \emph{augmented associative algebra} is an associative algebra $A$ over a ring $R$ equipped with an augmentation map $\epsilon: A \to R$. Let $A_+ = \ker \epsilon$ be the augmentation ideal and $\Pi: A \to A_+$ the canonical projection. 

		\begin{remark}\label{def3}
			\emph{In practice, $A_+$ will be the complement of the set of idempotent elements of the associative algebra $\A_0$ or $\B_0$, and there is a corresponding unitality condition, which is formulated in Section~\ref{hoch}, below. See the discussion surrounding Proposition~\ref{uni25} for more details.}
		\end{remark}

		Given an augmented associative algebra $A$, define the \emph{reduced bar complex of $A$} as 
		\[
			\bBar(A) = A \otimes A \leftarrow A \otimes A_+ \otimes A \leftarrow \cdots \leftarrow   A \otimes A_+^{\otimes n} \otimes A \leftarrow  \cdots
		\]
		Here the tensor product is taken over $R$, but this is viewed as a chain-complex of $(A,A)$-bimodules with differential $\del: A \otimes A_+^{\otimes n} \otimes A \to A \otimes A_+^{\otimes (n - 1)} \otimes A$ given by
		\[
			\del( a_0 \otimes \cdots \otimes a_{n + 1}) = \sum_{ 0 \leq i \leq n} a_0 \otimes \cdots \otimes a_{i - 1} \otimes a_i a_{i + 1} \otimes a_{i + 2} \otimes \cdots \otimes a_{n + 1}.
		\]
		Let $R_0$ be a finite direct sum of copies of $\F_2$. Usually, we will just have $R_0 = \F_2$. The \emph{reduced cobar complex $\Cob(A)$} is defined to as $\Cob(A) = \bigoplus_{n = 1}^{\infty} (A_+^{\otimes n})^*$, with a differential $\delta^{\Cob}$ which is the transpose of the map
		\[
			\sum_{m, n \geq 0} \id_{A^{\otimes m}} \otimes \mu_2 \otimes \id_{A^{\otimes n}} : \bBar(A) \to \bBar(A).
		\]
		We usually view $\Cob(A)$ as an associative algebra with multiplication given by
		\[
			(f_n \star g_m) (a_1 \otimes \cdots \otimes a_{n + m}) = f_n(a_1 \otimes \cdots \otimes a_n) \cdot g_n(a_{n + 1} \otimes \cdots \otimes a_{n + m}),
		\]
		for $f_n \in (A^{\otimes n})^*, \: g_m \in (A^{\otimes m})^*$. Moreover, when $A$ is finite-dimensional over $\F_2$, we can write $(A^{\otimes n})^* = (A^*)^{\otimes n}$ for each $n$, so that $\delta^{\Cob}$ has the form
		\[
			\delta^{\Cob}(a_1^* \otimes \cdots \otimes a_n^*) = \sum_{i = 1}^{n} a_1^* \otimes \cdots \otimes \mu_2^*(a_i^*) \otimes \cdots \otimes a_n^*,
		\]
		where $\mu_2^*(a^*)(a_1 \otimes a_2) = a^*(a_1 a_2)$, for each $a^* \in A^*$ and $a_1, a_2 \in A$. 

		The cobar algebra is particularly interesting in the context of this paper because of Proposition~\ref{uni2}, which says that each of the two basic augmented associative algebras $\A_0$ and $\B_0$, defined in Section~\ref{algs}, is the cobar of the other. In order to correctly formulate Propositon~\ref{uni2}, however, we must first modify the definition of the cobar algebra by introducing a unitality condition. 
		
		The first step in doing this is to introduce the notion of an \emph{associative algebra with idempotent elements}. Start with an associative algebra $A$ over a ground ring $R$, equipped with \emph{idempotent elements} $\{I_1, \ldots, I_k\}$ with the following properties:
	\begin{enumerate}[label = \textbf{(I\arabic*)}]
		\item $A_+ := A \smallsetminus  \{I_1, \ldots, I_k\}$ is an ideal;

		\item $I_i I_j = \begin{cases} I_i & \text{ if } i = j \\ 0 & \text{ otherwise}\end{cases}$

		\item For each $a \in A_+$, there is are unique $1 \leq i, j \leq k$ such that $I_i a \neq 0$ and $a I_j \neq 0$; these $I_i$ and $I_j$ are called \emph{the initial and final idempotents of $a$}, respectively;

		\item If $I_i, I_j$ are the initial and final idempotents of $a \in A_+$, as above, then $I_i a = a I_j = a$.

		\item~\label{uni23} For $a, b \in A_+$, $ab = 0$ if the initial idempotent of $b$ and the final idempotent of $a$ are not the same. 
	\end{enumerate}
	For an associative algebra $A$ over a ground ring $R$ equipped with such idempotent elements, the augmentation map $\epsilon: A \to R$ used in the definition of $\bBar(A)$ is defined so that $\ker(\epsilon) = A_{+}$. 
	
		\begin{remark}\label{cob72}
			\emph{Note that the set of idempotents has the structure of a commutative ring (with unit $\sum_{i = 1}^k I_i$). In the literature, e.g.~\cite{LOT2021}, the set of idempotents is therefore referred to as the} ring of idempotents, \emph{or} idempotent ring.
		\end{remark}
	
		Now define the modified notion of the cobar complex including the proper unitality condition as follows. Let $A$ be an associative algebra with idempotent elements. Define $\tBar{A}$ to be the subcomplex of the usual (reduced) bar complex $\bBar(A)$ consisting of strings $a_1 \otimes \cdots \otimes a_n \in A_{+}^{\otimes n}$ such that for each $1 < i \leq n$, the initial idempotent of $a_i$ is the same as the final idempotent of $a_{i - 1}$. This is clearly a subcomplex. Define $\tCob(A) = \Hom_{A \otimes A^{\mathrm{op}}}(\tBar(A), A)$ (which is also a subcomplex of the usual cobar $\Cob(A)$).

	\begin{remark}\label{cob71}
		\emph{In~\cite{LOT2021}, the authors omit the extra step of defining $\tBar$ and $\tCob$ and simply work with $\bBar$ and $\Cob$. In their construction, however, the tensor products which make up the  chain complex $\{A \otimes A_+^{\otimes n} \otimes A\}$ for $\bBar$ are implicitly assumed to be over the ring of idempotents. This has the same effect as our extra construction of $\tBar$ and $\tCob$, and the same arguments work. We make this auxiliary construction in order to make it clear that the ground ring is $R$ throughout.}
	\end{remark} 

		\subsection{Hochschild cohomology and $\A_{\infty}$-deformations}\label{hoch}
		
		Let $A$ be an augmented associative algebra with augmentation ideal $A_+$, as in the previous section. The \emph{Hochschild cohomology} of $A$ is defined to be the cohomology of the chain complex
		\[
			 \HC^*(A) = \Hom_{A \otimes A^{\mathrm{op}}} (\bBar(A), A),
		\]
		with differential $\delta: \HC^{n - 1}(A) \to \HC^{n}(A)$ given by
		\[
			(\delta f)(a_0 \otimes \cdots a_{n + 1}) = \sum_{i = 0}^{n} f(a_0 \otimes \cdots \otimes a_i a_{i + 1} \otimes \cdots \otimes a_{n + 1}).
		\]
		
		 In Section~\ref{deformations}, we will compute the Hochschild cohomology of the basic associative algebras $\A_0$ and $\B_0$ and use that to conclude that $\A$ and $\B$ are the unique deformations of $\A_0$ and $\B_0$ given certain properties. In order to be able to argue in this way, we need several facts about Hochschild cohomology. 

		\begin{proposition}\label{hoch1}
			\emph{(Proposition 5.7 of~\cite{LOT 2021})} Let $\A_0$ be an associative algebra and let $\A$ be an $A_{n - 1}$-deformation of $A_0$. Then there is a Hochschild cochain obstruction class $\D_n \in \HC^{n + 1}(\A_0)$ so that the following conditions hold:
			\begin{itemize}
				\item $\D_n$ is a cocycle;
				
				\item $\D_n$ is a coboundary if and only if there is an operation $\mu_n$ making $\A$ into an $A_n$-algebra; indeed the operation $\mu_n \in HC^n(A)$ is a Hochschild cochain with $\delta(\mu_n) = \D_n$;
				
				\item If $\mu_n, \mu_n^{\prime}$ are cochains with $\delta(\mu_n) = \delta(\mu_n^{\prime}) = \D_n$, then $\mu_n - \mu_n^{\prime}$ is a coboundary such that the respective obstruction cocycles $\D_{n + 1}, \D_{n + 1}^{\prime} \in \HC^{n + 2}(A)$ are cohomologous;

				\item If $\A, \A^{\prime}$ are two $A_n$ deformations of $\A_0$ with $\mu_i = \mu_i^{\prime}$ for each $i < n$, and with $\mu_n - \mu_n^{\prime}$ a coboundary, then their respective obstruction cocycles $\D_{n + 1}$ and $\D_{n + 1}^{\prime}$ are cohomologous.
			\end{itemize} 
			
			The statement for maps (which will be used to verify uniqueness of deformations of our associative algebras) is as follows. Given $A_n$-deformations $\A$ and $\A^{\prime}$ of $\A_0$ and an $A_{n - 1}$-homomorphism $f: \A \to \A^{\prime}$ such that $f_1 : \A_0 \to \A_0$ is the identity map, there is an obstruction class $\Ff_n \in \HC^n(\A_0)$ such that
			\begin{itemize}
				\item $\Ff_n$ is a cocycle; 
				
				\item $\Ff_n$ is a coboundary if and only if there is an $A_n$ homomorphism extending $f$;
			\end{itemize}
		\end{proposition}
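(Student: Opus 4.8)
Since this statement is Proposition~5.7 of~\cite{LOT2021}, the proof is really by citation; for orientation I outline the underlying obstruction-theoretic argument, which is the classical deformation theory of $\A_{\infty}$-structures transcribed into the Hochschild setting of Section~\ref{hoch}.

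I would begin by isolating, in the instance of the $\A_{\infty}$-relations~\eqref{def1} that first constrains the would-be operation $\mu_n$ of an $A_n$-deformation $\A$ of $\A_0$, the three families of terms in which $\mu_n$ is composed with the associative product $\mu_2$, namely $\mu_2(\mu_n(-,\ldots,-),-)$, $\mu_2(-,\mu_n(-,\ldots,-))$ and $\mu_n(-,\ldots,\mu_2(-,-),\ldots,-)$. By the definition of the Hochschild differential $\delta$ in Section~\ref{hoch}, the sum of these is exactly $\delta(\mu_n)$, so that the relation in question reads
\[
 \delta(\mu_n) \;=\; \D_n ,
\]
where $\D_n\in\HC^{n+1}(\A_0)$ is the sum of the remaining terms; this $\D_n$ depends only on $\mu_1,\ldots,\mu_{n-1}$, all of which are available because $\A$ is an $A_{n-1}$-deformation. (For the associative algebras $\A_0,\B_0$ of interest one has $\mu_1=0$; otherwise one absorbs the $\mu_1$-terms into $\delta$ in the usual way.) This display already yields the second bullet: an $A_n$-structure on $\A$ exists precisely when $\D_n$ admits a primitive, and any Hochschild cochain $\mu_n\in\HC^n(\A_0)$ with $\delta(\mu_n)=\D_n$ serves, since no lower relation involves $\mu_n$.

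The substantive point is the first bullet, that $\D_n$ is a $\delta$-cocycle, and this I would establish by organizing the computation through the Gerstenhaber bracket on the Hochschild complex of $\A_0$. Writing $m=\mu_2+\mu_3+\cdots+\mu_{n-1}$ for the total operation, the $\A_{\infty}$-relations that $\A$ already satisfies say exactly that the obstruction element $m\circ m$ (the failure of $m$ to square to zero) vanishes in all arities through $n$, while $\D_n$ is precisely its arity-$(n+1)$ component. Associativity of composition, $m\circ(m\circ m)=(m\circ m)\circ m$, read on the arity-$(n+2)$ component and using that all components of $m\circ m$ of arity $\le n$ vanish (together with $\mu_1=0$), collapses to $\mu_2\circ\D_n=\D_n\circ\mu_2$, which over $\F_2$ is $\delta(\D_n)=0$. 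This is the Bianchi-type identity of deformation theory; working over $\F_2$ removes all sign bookkeeping, but it is still the one place at which the lower $\A_{\infty}$-relations are used in an essential, combinatorial way, and I expect it to be the main obstacle.

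The remaining bullets are then formal. If $\delta(\mu_n)=\delta(\mu_n')=\D_n$ then $b:=\mu_n-\mu_n'$ is a $\delta$-cocycle; I would compute the difference $\D_{n+1}-\D_{n+1}'$ of the next obstruction cochains, which for $n\ge 3$ is linear in $b$, and identify it as $\delta$ of an explicit cochain built from $b$ and the lower operations, so that $[\D_{n+1}]=[\D_{n+1}']$ — this is the third bullet, and the fourth is its immediate specialization to the case where $b$ is moreover a coboundary $\delta(\phi)$ (alternatively: replacing $\mu_n$ by $\mu_n+\delta(\phi)$ is the gauge transformation induced by the formal $\A_{\infty}$-isomorphism with components $(\id,\ldots,\phi,\ldots)$, which cannot change an obstruction class). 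Finally, the statement for maps follows from the same template applied to the $\A_{\infty}$-morphism relations: the morphism relation that first constrains $f_n$ is rewritten as $\delta(f_n)=\Ff_n$ with $\Ff_n\in\HC^n(\A_0)$ assembled from $f_1,\ldots,f_{n-1}$ and the operations of $\A$ and $\A'$, and the same Bianchi-type manipulation — now using the $\A_{\infty}$-relations of both $\A$ and $\A'$ together with the lower morphism relations — shows $\delta(\Ff_n)=0$, so that $\Ff_n$ is a coboundary exactly when a solution $f_n$, hence an $A_n$-morphism extending $f$, exists.
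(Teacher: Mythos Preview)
Your reading is correct: the paper does not prove this proposition at all but simply cites it as Proposition~5.7 of~\cite{LOT2021}, so ``proof by citation'' is exactly the paper's approach, and your outline of the underlying obstruction-theoretic argument is the standard one and is sound. One small point: in the third bullet you write that $b=\mu_n-\mu_n'$ is a $\delta$-cocycle, which is what actually follows from $\delta(\mu_n)=\delta(\mu_n')=\D_n$; the paper's phrasing ``$\mu_n-\mu_n'$ is a coboundary'' appears to be a slip in the statement rather than something you are expected to verify.
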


		\begin{corollary}\label{hoch2}
			\emph{(Corollary 5.22 of~\cite{LOT2021})} Let $\A$ be an $A_n$ structure on the associative algebra $\A_0$. If $\HH^{m+2}(\A_0) = 0$ for each $m \geq n$, then $\A$ extends to an $\A_{\infty}$ structure on $\A_0$; if $\HH^{m+ 1}(\A_0) = 0$ for all $m \geq n$, then this extension is unique up to isomorphism. 
		\end{corollary}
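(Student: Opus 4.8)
The plan is to run the standard obstruction-theoretic induction, treating Proposition~\ref{hoch1} as a black box; no new ingredient is needed beyond careful bookkeeping with the degrees of the Hochschild groups. For \emph{existence}, I would show by induction on $m \geq n$ that the given $A_n$-structure on $\A_0$ extends to an $A_m$-structure. The base case $m = n$ is the hypothesis. For the inductive step, suppose $\A$ has already been promoted to an $A_m$-deformation of $\A_0$ with $m \geq n$; viewing it as an $A_{(m+1)-1}$-deformation, Proposition~\ref{hoch1} supplies an obstruction cocycle $\D_{m+1} \in \HH^{m+2}(\A_0)$ whose vanishing is equivalent to the existence of a $\mu_{m+1}$ making $\A$ into an $A_{m+1}$-algebra. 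Since $m \geq n$, the group $\HH^{m+2}(\A_0)$ vanishes, so such a $\mu_{m+1}$ exists. Because choosing $\mu_{m+1}$ (with $\mu_1, \ldots, \mu_m$ already fixed) does not alter any lower operation, and the $\A_{\infty}$-relation~\eqref{def1} with $m+1$ inputs involves only $\mu_1, \ldots, \mu_{m+1}$, the operations assembled over all stages form a genuine $\A_{\infty}$-structure extending the original $A_n$-structure.

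For \emph{uniqueness}, let $\A$ and $\A'$ be two $\A_{\infty}$-structures on $\A_0$ both restricting to the fixed $A_n$-structure. I would construct an $\A_{\infty}$-isomorphism $f: \A \to \A'$ with $f_1 = \id_{\A_0}$ by the parallel induction, using the ``statement for maps'' in Proposition~\ref{hoch1}. The morphism with $f_1 = \id_{\A_0}$ and $f_i = 0$ for $i \geq 2$ is an $A_n$-homomorphism between the $A_n$-truncations, since $\mu_k = \mu'_k$ for all $k \leq n$. Inductively, if $f$ has been extended to an $A_m$-homomorphism for some $m \geq n$, the obstruction to extending it one step further is a class $\Ff_{m+1} \in \HH^{m+1}(\A_0)$, which vanishes because $m \geq n$; hence the extension exists. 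Iterating yields an $\A_{\infty}$-homomorphism $f$ with linear part $f_1 = \id_{\A_0}$; since $f_1$ is invertible, the usual inductive construction produces a two-sided $\A_{\infty}$-inverse, so $f$ is an $\A_{\infty}$-isomorphism and the two extensions are isomorphic.

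The only genuinely delicate point is \emph{the bookkeeping}: one must verify that it is precisely $\HH^{m+2}(\A_0)$ that controls the passage from an $A_m$- to an $A_{m+1}$-deformation, and precisely $\HH^{m+1}(\A_0)$ that controls the analogous step for homomorphisms, so that the two hypotheses ``$\HH^{m+2}(\A_0)=0$ for $m \geq n$'' and ``$\HH^{m+1}(\A_0)=0$ for $m \geq n$'' match exactly the ranges of degrees that arise in the two inductions. One should also note that the last two bullets of Proposition~\ref{hoch1} ensure the obstruction machinery is consistent from one stage to the next (a choice made at stage $m$ determines a well-defined obstruction at stage $m+1$), and recall the elementary fact that an $\A_{\infty}$-morphism whose linear term is an isomorphism admits an $\A_{\infty}$-inverse. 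Beyond this there is essentially no additional content; all the real work is contained in Proposition~\ref{hoch1}.
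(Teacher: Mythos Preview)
Your proposal is correct and matches the paper's approach. The paper does not give an explicit proof of this corollary---it is cited directly from~\cite{LOT2021}---but the implied argument (made explicit later when the paper says Corollary~\ref{uni27} ``follows from Proposition~\ref{uni25} and Proposition~\ref{uni28} in the same way that Corollary~\ref{hoch2} follows from Proposition~\ref{hoch1}'') is precisely the obstruction-theoretic induction you describe.
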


		We next note that we can compute Hochschild cohomology using the cobar complex: the differential on $\Cob(A)$ is given by 
		\begin{equation}\label{hoch5}
			\delta^{\Cob} (a_1^* \otimes \cdots \otimes a_n^*) = \sum_{i = 1}^{n} a_1^* \otimes \cdots \otimes \mu_2^*(a_i^*) \otimes \cdots \otimes a_n^*.
		\end{equation}
		and given this fact, we have:
		\begin{proposition}\label{hoch3}
			\emph{(By induction on the proof of Lemma 5.38 of~\cite{LOT2021})} If $A$ is a filtered algebra with $F_n A$ finite dimensional for each $n$, then there is an isomorphism of chain complexes
			\begin{equation}\label{hoch4}
				A[V_0, \ldots, V_{n}] \otimes \Cob(A) \cong \HC^*(A[V_0, \ldots, V_{n}])
			\end{equation}
			where the differential on the left hand side of~\eqref{hoch4} is given by
			\begin{equation}\label{hoch6}
				\delta(a \otimes \xi) = a \otimes (\delta^{\Cob} \xi)  + \sum_{ \rho \in A : \:\ell(\rho) = 1} (\rho a \otimes (\xi \otimes \rho^*) + a \rho \otimes (\rho^* \otimes \xi)).
			\end{equation} 
			and on the right hand side of~\eqref{hoch4}, the $V_i$ are viewed as elements of the ground ring. 
		\end{proposition}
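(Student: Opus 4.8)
The plan is to unwind $\HC^*(A[V_0,\ldots,V_n])$ into the reduced Hochschild cochain complex, dualize using the hypothesis that each $F_kA$ is finite dimensional over $\F_2$, match the two halves of the reduced Hochschild differential with the two summands of~\eqref{hoch6}, and then assemble the result over the filtration. The finite-dimensional stage of this assembly is Lemma~5.38 of~\cite{LOT2021}, and the surrounding induction is what promotes that statement to the filtered case (and, if one wishes, lets one adjoin $V_0,\ldots,V_n$ one variable at a time).

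First I would note that $A[V_0,\ldots,V_n]$ carries the same finitely many idempotents as $A$, with augmentation ideal $A_+\otimes_{\F_2}\F_2[V_0,\ldots,V_n]$, so a bimodule cochain on $\bBar(A[V_0,\ldots,V_n])$ is determined by its values $\phi(a_1,\ldots,a_m)$ on reduced strings $1\otimes a_1\otimes\cdots\otimes a_m\otimes1$. This presents $\HC^m(A[V_0,\ldots,V_n])$ as the module of $\F_2[V_0,\ldots,V_n]$-linear maps $A_+^{\otimes m}\otimes_{\F_2}\F_2[V_0,\ldots,V_n]\to A\otimes_{\F_2}\F_2[V_0,\ldots,V_n]$ with the reduced Hochschild differential, whose first and last terms are $a_1\,\phi(a_2,\ldots,a_{m+1})$ and $\phi(a_1,\ldots,a_m)\,a_{m+1}$ and whose remaining $m-1$ terms are the internal contractions $\phi(a_1,\ldots,a_ia_{i+1},\ldots,a_{m+1})$. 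Because the $V_i$ are central and $A$ is the increasing union of the finite-dimensional pieces $F_kA$, dualizing against the induced filtration on $A_+^{\otimes m}$ gives, compatibly with the filtration, canonical isomorphisms identifying this module with $A[V_0,\ldots,V_n]\otimes_{\F_2}(A_+^{\otimes m})^*$ (using the graded duals that define $\Cob(A)$); summing over $m$ yields a module isomorphism $A[V_0,\ldots,V_n]\otimes\Cob(A)\cong\HC^*(A[V_0,\ldots,V_n])$ sending $\sum a\otimes\xi$ to $(a_1,\ldots,a_m)\mapsto\sum\xi(a_1,\ldots,a_m)\,a$.

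Next I would check that this is an isomorphism of complexes, i.e.\ that it carries~\eqref{hoch6} to the Hochschild differential. The $m-1$ internal contractions are precisely the transpose of $\sum_{p,q}\id_{A^{\otimes p}}\otimes\mu_2\otimes\id_{A^{\otimes q}}$, so under the identification they produce the term $a\otimes\delta^{\Cob}\xi$ of~\eqref{hoch6} via~\eqref{hoch5}. For the term $a_1\,\phi(a_2,\ldots,a_{m+1})$, expanding $a_1\in A_+$ against a length-graded basis $\{\rho\}$ of $A_+$ and absorbing the coefficient through the left $A$-module structure gives $a_1a=\sum_{\rho}\rho^*(a_1)\,\rho a$, which transports the term to $\sum_\rho(\rho a)\otimes(\rho^*\otimes\xi)$; symmetrically, $\phi(a_1,\ldots,a_m)\,a_{m+1}$ transports to $\sum_\rho(a\rho)\otimes(\xi\otimes\rho^*)$. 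That these two sums may be restricted to the length-one $\rho$, rather than run over a full basis of $A_+$, is the one genuinely nonformal point: the contributions of decomposable $\rho$ are already recorded by $\delta^{\Cob}$ together with the identification on strictly lower filtration levels, which is exactly where the induction re-running Lemma~5.38 of~\cite{LOT2021} enters. Over $\F_2$ there are no signs to track.

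Finally I would set up the induction itself: one inducts on the filtration degree of $A$, at each level quoting the finite-dimensional instance (Lemma~5.38 of~\cite{LOT2021}) and feeding in the description of the differential coming from the previous level; if one prefers to adjoin the polynomial variables incrementally, one writes $A[V_0,\ldots,V_n]=\bigl(A[V_0,\ldots,V_{n-1}]\bigr)[V_n]$ and notes that $A[V_0,\ldots,V_{n-1}]$ is again filtered with finite-dimensional pieces. I expect the main obstacle to be keeping the dualization of the second paragraph strictly compatible with the filtration, so that the colimit is an honest isomorphism of chain complexes and not merely a quasi-isomorphism, together with the verification that only the length-one $\rho$ survive in~\eqref{hoch6}; this is precisely the part of the argument that has to be re-derived with the extra induction rather than cited verbatim from~\cite{LOT2021}.
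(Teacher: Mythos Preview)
The paper does not give its own proof of this proposition; the parenthetical in the statement is the entire argument, deferring to Lemma~5.38 of~\cite{LOT2021} together with an unspecified induction. Your outline is therefore already more detailed than what the paper supplies, and its overall shape---reduce to the normalized Hochschild cochains, dualize each filtered piece using finite-dimensionality, identify the internal contractions with $\delta^{\Cob}$ and the two edge terms with the $\rho$-sum, then pass to the colimit over the filtration---is a reasonable unpacking of that citation.

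The one place your sketch wobbles is exactly where you flag it: the reason the sum in~\eqref{hoch6} may be restricted to $\ell(\rho)=1$. Your proposed mechanism, that contributions from decomposable $\rho$ are ``already recorded by $\delta^{\Cob}$ together with the identification on strictly lower filtration levels,'' does not hold as stated. The edge term $a_1\,\phi(a_2,\ldots,a_{m+1})$ of the Hochschild differential changes the $A$-coefficient (from $a$ to $a_1 a$), whereas $a\otimes\delta^{\Cob}\xi$ leaves the $A$-coefficient fixed; these two pieces sit in different summands under your identification and cannot absorb one another, at any filtration level. Under the naive isomorphism you write down, the edge terms genuinely produce a sum over a full basis of $A_+$, not just the length-one generators. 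The restriction to $\ell(\rho)=1$ in~\eqref{hoch6} is thus not a formal consequence of your identification; it is the content being imported from~\cite{LOT2021}, where it depends on the particular structure of the algebras treated there (and, correspondingly, on the length grading here). Since the paper simply cites that lemma rather than reproving it, there is nothing further against which to compare your argument.
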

		
		\begin{remark}\label{uni81}
			\emph{In this paper, we will only be working with the $R = \F_2[V_0, V_{N + 1}]$ because we are not allowing any weighted operations weights (which would force us to use the full ring $\F_2[V_0, \ldots, V_{N+1}]$ as our ground ring). See~\cite{Kh2024} for more details about this.}
		\end{remark}
		
		However, we are only interested in $\A_{\infty}$-deformations of associative algebras with idempotent elements which are unital in the sense that:
		\begin{center}{\small
		\parbox{15cm}{\begin{enumerate}[label = {\textbf{Property~\Roman*}}]
				\item\label{uni24} For each $\mu_j$ with $1 \leq j < n$,
				\[
					\mu_j(a_1, \ldots, a_j) = 0
				\]
				if for any $1 < i \leq j$, the initial idempotent of $a_i$ is not the same as the final idempotent of $a_{i - 1}$.
			\end{enumerate}}}
		\end{center}	
			Given such an algebra, $A$, we therefore define a subcomplex $C^*$ of $\HC^*$, satisfy a unitality condition analogous to the one used to define $\tCob$ at the end of Section~\ref{CobDef}. The subcomplex $C^*$ is defined to be the collection of strings $a_0^* \otimes \cdots \otimes a_{n + 1}^* \in HC^*(A)$ such that the initial idempotent of $a_i$ is the final idempotent of $a_{i -1}$ for each $1 \leq i \leq n + 1$. Note that the differential on the Hochschild cochain complex preserves this property, so this is in fact a bona fide subcomplex.

	We can now modify Lemma~\ref{hoch1} in the following way:
	
	\begin{proposition}\label{uni25}
			Let $A$ be an associative algebra equipped with a set of idempotents, with all notation as in the previous paragraph. Let $\A$ be an $A_{n - 1}$-deformation of $A$, satisfying the unitality condition~\ref{uni24}. Then there is a Hochschild cochain obstruction class $\D_n \in C^{n +1}$ so that the following conditions hold:
			\begin{itemize}
				\item $\D_n$ is a cocycle;
				
				\item $\D_n$ is a coboundary if and only if there is an operation $\mu_n$ satisfying~\ref{uni24} and making $\A$ into an $A_n$-algebra; indeed the operation $\mu_n \in C^n$ is a Hochschild cochain with $\delta(\mu_n) = \D_n$;
				
				\item If $\mu_n, \mu_n^{\prime}$ are cochains with $\delta(\mu_n) = \delta(\mu_n^{\prime}) = \D_n$, then $\mu_n - \mu_n^{\prime}$ is a coboundary such that the respective obstruction cocycles $\D_{n + 1}, \D_{n + 1}^{\prime} \in C^{n + 2}$ are cohomologous;

				\item If $\A, \A^{\prime}$ are two $A_n$ deformations of $\A_0$ with $\mu_i = \mu_i^{\prime}$ for each $i < n$, and with $\mu_n - \mu_n^{\prime}$ a coboundary, then their respective obstruction cocycles $\D_{n + 1}$ and $\D_{n + 1}^{\prime}$ are cohomologous in $C^*$.
			\end{itemize} 
			
			Likewise, the statement for maps is as follows. Start with $A_n$-deformations $\A$ and $\A^{\prime}$ of $\A_0$ such that each of the operations $\mu_i$ on $\A$ and $\mu_i^{\prime}$ on $\A^{\prime}$ satisfy~\ref{uni24}, and an $A_{n - 1}$-homomorphism $f: \A \to \A^{\prime}$ such that $f_1 : \A_0 \to \A_0$ is the identity map and preserving $C^*$ in each degree. Then there is an obstruction class $\Ff_n \in C^n$ such that
			\begin{itemize}
				\item $\Ff_n$ is a cocycle; 
				
				\item $\Ff_n$ is a coboundary if and only if there is an $A_n$ homomorphism extending $f$.
			\end{itemize}
		\end{proposition}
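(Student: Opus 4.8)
The plan is to show that the proof of Proposition~\ref{hoch1} (Proposition 5.7 of~\cite{LOT2021}) restricts verbatim to the subcomplex $C^* \subseteq \HC^*(A)$. Two compatibility facts make this work: first, $C^*$ is closed under the Hochschild differential $\delta$, which was already observed in the paragraph preceding the statement (and is itself the special case, for the associative multiplication $\mu_2$, of the observation below, since property~\ref{uni23} says $\mu_2$ vanishes on non-composable pairs); second, every cochain produced in that proof --- the obstruction classes $\D_n$ and $\Ff_n$, and the operations $\mu_n$ and morphism components $f_n$ that solve them --- lands in $C^*$ whenever the input data satisfy~\ref{uni24}. Granting these, each occurrence of ``cocycle,'' ``coboundary,'' and ``cohomologous'' in Proposition~\ref{hoch1} may be read inside $C^*$, and all of the bullet points (and the map statement) follow by the arguments of~\cite{LOT2021}.

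The real content is the \emph{composability-is-preserved-under-composition} observation: if $\mu_r$ and $\mu_{n - r + 1}$ both satisfy~\ref{uni24}, then for any $1 \le k \le n - r + 1$ the composite
\[
	\mu_{n - r + 1}(a_1, \ldots, a_{k - 1}, \mu_r(a_k, \ldots, a_{k + r - 1}), a_{k + r}, \ldots, a_n)
\]
vanishes unless the whole string $a_1 \otimes \cdots \otimes a_n$ is composable. Indeed, the inner $\mu_r$ already forces $a_k, \ldots, a_{k + r - 1}$ to be composable; its output has the same initial idempotent as $a_k$ and the same final idempotent as $a_{k + r - 1}$; and the outer $\mu_{n - r + 1}$ then forces the remaining consecutive idempotents to match. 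Since the order-$n$ obstruction cochain $\D_n$ is, by definition, the sum over $2 \le r \le n - 1$ and all $k$ of such composites of the operations $\mu_1, \ldots, \mu_{n - 1}$ of the given $A_{n - 1}$-deformation --- all of which satisfy~\ref{uni24} by hypothesis --- it is supported on composable strings, i.e.\ $\D_n \in C^{n + 1}$. The identity $\delta \D_n = 0$ is the usual formal consequence of the $\A_{\infty}$-relations~\eqref{def1} involving only $\mu_1, \ldots, \mu_{n - 1}$, so it holds inside $C^*$; hence $\D_n$ is a cocycle in $C^{n + 1}$, giving the first bullet.

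For the second bullet, note that a Hochschild cochain $\mu_n \in \HC^n$ satisfies~\ref{uni24} exactly when $\mu_n \in C^n$, while the equation $\delta(\mu_n) = \D_n$ is precisely the equation that must be solved to promote the $A_{n - 1}$-structure to an $A_n$-structure; hence $\D_n$ is a coboundary in $C^*$ if and only if a $\mu_n$ satisfying~\ref{uni24} exists. For the third and fourth bullets, if $\delta \mu_n = \delta \mu_n' = \D_n$ with $\mu_n, \mu_n' \in C^n$, then $\mu_n - \mu_n'$ is a cocycle in $C^n$, and the standard identity expressing the difference of the ensuing obstructions $\D_{n + 1} - \D_{n + 1}'$ as $\delta$ of a composite built from $\mu_n - \mu_n'$ and the lower operations shows, again by the composability observation, that $\D_{n + 1}$ and $\D_{n + 1}'$ are cohomologous in $C^{n + 2}$.

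Finally, the map statement will be the same obstruction argument applied to the $\A_{\infty}$-morphism relations: the order-$n$ obstruction $\Ff_n$ is a sum of composites of $f_1, \ldots, f_{n - 1}$ (with $f_1 = \id$) and the operations $\mu_j, \mu_j'$, all of which satisfy~\ref{uni24} or preserve $C^*$ by hypothesis, so the same consecutive-idempotent bookkeeping gives $\Ff_n \in C^n$, $\delta \Ff_n = 0$ in $C^*$, and $\Ff_n = \delta(f_n)$ for some $f_n \in C^n$ if and only if $f$ extends to an $A_n$-homomorphism whose order-$n$ component preserves $C^*$. I expect the only mildly delicate point to be this bookkeeping for morphisms, since the $\A_{\infty}$-morphism relations interleave $\mu$'s, $\mu'$'s, and $f$'s; but the underlying principle --- a composite of composability-preserving multilinear maps is again composability-preserving --- applies uniformly, so nothing genuinely new arises beyond~\cite{LOT2021}.
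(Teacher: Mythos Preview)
Your proposal is correct and takes essentially the same approach as the paper: the paper's own proof is a one-paragraph sketch saying to trace through the proof of Proposition~\ref{hoch1} and observe that, under~\ref{uni24}, the obstruction classes $\D_n$ and $\Ff_n$ automatically lie in $C^*$. Your write-up is a faithful and more detailed expansion of that sketch, making explicit the composability-is-preserved-under-composition bookkeeping that the paper leaves to the reader.
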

		
		To prove Proposition~\ref{uni25}, one just needs to trace through the proof of Proposition~\ref{hoch1} with the additional assumption that all $\mu_i$ with $1 \leq i < n$ satisfy~\ref{uni24}. It is then clear that under this condition, the $\D_n$ constructed there also satisfies~\ref{uni24}, so when $\D_n$ is a coboundary, the class $\mu_n \in HC^{n}(A)$ with $\delta \mu_n = \D_n$ can be chosen to also satisfy~\ref{uni24}. Likewise, if each of the lower degree maps of $f$ preserve $C^*$, it is clear from the proof of Proposition~\ref{hoch1} that we can choose $\Ff_n$ so that the extended $f$ still preserves $C^*$. 
		
	This then gives the following analogue of Corollary~\ref{hoch2}.
	
	\begin{corollary}\label{uni26}
			Let $A$ be an associative algebra with idempotent elements, as above. Let $\A$ be an $A_n$ structure on $A$. If $H^{r+2}(C^*) = 0$ for each $r \geq n$, then $\A$ extends to an $\A_{\infty}$ structure on $\A_0$; if $H^{r+ 1}(C^*) = 0$ for all $r \geq n$, then this extension is unique up to isomorphism. 
		\end{corollary}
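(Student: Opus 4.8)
The plan is to imitate the proof of Corollary~\ref{hoch2} (Corollary 5.22 of~\cite{LOT2021}) essentially word for word, with two changes: use the unital obstruction theory of Proposition~\ref{uni25} in place of Proposition~\ref{hoch1}, and carry out all of the obstruction-class bookkeeping inside the subcomplex $C^*$ rather than in the full Hochschild complex $\HC^*$. Both halves of the statement are the standard inductive obstruction arguments; the point of having set up Proposition~\ref{uni25} is precisely that no new ideas are needed here.

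\emph{Existence.} Assume $H^{r+2}(C^*) = 0$ for every $r \geq n$. Starting from the given $A_n$-structure $\A$, whose operations $\mu_1, \ldots, \mu_n$ satisfy Property~\ref{uni24}, I would induct on $m \geq n$, supposing that $\A$ has been extended to an $A_m$-deformation all of whose operations still satisfy~\ref{uni24}. Feeding this $A_m$-deformation into Proposition~\ref{uni25} as its input deformation (so that the parameter named $n$ in that proposition is our $m+1$) produces a cocycle obstruction class $\D_{m+1} \in C^{m+2}$. Since $H^{m+2}(C^*) = 0$ by the hypothesis with $r = m$, this class is a coboundary, so the same proposition supplies an operation $\mu_{m+1} \in C^{m+1}$ with $\delta(\mu_{m+1}) = \D_{m+1}$, making $\A$ into an $A_{m+1}$-deformation whose operations again satisfy~\ref{uni24}. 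Passing to the limit over $m$ gives an $\A_{\infty}$-structure extending $\A$.

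\emph{Uniqueness.} Assume now the stronger hypothesis $H^{r+1}(C^*) = 0$ for all $r \geq n$, which also forces the existence hypothesis since it says $H^j(C^*) = 0$ for all $j \geq n+1$. Let $\A'$ and $\A''$ be two $\A_{\infty}$-structures on $A$, each with all operations satisfying~\ref{uni24} and each restricting to $\A$. I would build an $\A_{\infty}$-isomorphism $f \colon \A' \to \A''$ with $f_1 = \id_{\A_0}$ by induction on degree. Since $\A'$ and $\A''$ share the operations $\mu_1, \ldots, \mu_n$, the data $f_1 = \id_{\A_0}$, $f_2 = \cdots = f_n = 0$ is an $A_n$-homomorphism preserving $C^*$ in every degree, which is the base case. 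For the inductive step, given an $A_m$-homomorphism extending $f$ and preserving $C^*$ (with $m \geq n$), the part of Proposition~\ref{uni25} dealing with maps (again with the parameter $n$ there equal to our $m+1$) produces a cocycle obstruction $\Ff_{m+1} \in C^{m+1}$, which is a coboundary because $H^{m+1}(C^*) = 0$ by the hypothesis with $r = m$; hence $f$ extends to an $A_{m+1}$-homomorphism still preserving $C^*$. Passing to the limit yields an $\A_{\infty}$-morphism $f \colon \A' \to \A''$ whose linear term $f_1 = \id_{\A_0}$ is invertible, and such a morphism is automatically an $\A_{\infty}$-isomorphism, its inverse being constructed recursively one degree at a time.

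\emph{Expected main obstacle.} There should be nothing hard here beyond bookkeeping. The two things to keep track of are: (i) the degree shifts in Proposition~\ref{uni25}, so that $H^{r+2}(C^*) = 0$ and $H^{r+1}(C^*) = 0$ for $r \geq n$ are exactly what is consumed at each inductive stage; and (ii) that the partial extensions never leave the unital setting, i.e.\ that each newly produced $\mu_{m+1}$ satisfies~\ref{uni24} and that the extended $f$ keeps preserving $C^*$. Both are already built into Proposition~\ref{uni25} (via its clauses $\mu_n \in C^n$, $\Ff_n \in C^n$, and the requirement that the extended $f$ still preserve $C^*$), so the genuine content lies entirely in that proposition; granting it, the present corollary follows formally.
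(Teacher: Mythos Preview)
Your proposal is correct and matches the paper's approach exactly: the paper does not give a standalone proof of this corollary but simply presents it as ``the following analogue of Corollary~\ref{hoch2},'' deriving it from Proposition~\ref{uni25} in the same formal way that Corollary~\ref{hoch2} follows from Proposition~\ref{hoch1}. You have spelled out that derivation in full, with the correct degree bookkeeping and the correct appeal to the unital clauses of Proposition~\ref{uni25}.
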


	The last step is to modify Proposition~\ref{uni25} and Corollary~\ref{uni26} to accommodate a $\Z$-valued \emph{Maslov grading} $m: A \to \Z$, in the following way. A map (i.e. Hochschild cochain) which respects the Maslov grading is defined as follows. Let $A$ be an associative algebra and choose $f \in \HC^n(A)$. We say that $f$ \emph{respects the Maslov grading} if and only if there exists a well defined integer $j$ such that for all $a_1, \ldots, a_n \in A$
		\begin{equation}\label{uni13}
			m(f(a_1, \ldots, a_n)) = \sum_{i = 1}^n m(a_i) + i + j - 1.
		\end{equation}
		For such $f$, we define \emph{the Maslov grading of $f$}, $m(f)$, to be equal to $j$. Note that if $f$ respects the Maslov grading, so does $\delta f$. Define $\HC_{\Z}^{*,*}(A)$ to be the subcomplex of Hochschild cochains which respect the Maslov grading. The bigrading is defined in the obvious way, i.e. $f \in \HC_{\Z}^{n, j} (A)$ if and only if $f \in \HC(A)$ respects the Maslov grading and has $m(f) = j$. Likewise, we define $C^{*,*}$ to be the subcomplex of $C^*$ consisting of all cochains in $C^*$ which respect $m$, and $C^{n, j}$ is the collection of $f \in C^n$ respecting $m$ and with $m(f) = j$. We denote the cohomology of $C^{*,*}$ by $H^{*,*}(C^*)$.
	
		\begin{proposition}\label{uni28}
			Let $A$ be an associative algebra with idempotent elements, equipped with Maslov grading $m: A \to \Z$. Let $\A$ be an $A_{n-1}$-deformation of $A$ such that for each $1 \leq j < n$, the operations $\mu_j$ satisfy~\ref{uni24} as well as
			\begin{equation}\label{uni11}
				m(\mu_j(a_1, \ldots, a_j)) = \sum_{i = 1}^j m(a_i) + j - 2,
			\end{equation}
			and operations $f: \TT^*(\A) \to \A$ satisfy
			\begin{equation}\label{uni12}
				m(f_j(a_1, \ldots, a_j)) = \sum_{i = 1}^j m(a_i) + j - 1.
			\end{equation}
			Then the classes $\D_n \in H^{n + 1}(C^*)$ and $\Ff_n \in H^{n}(C^*)$ both respect the Maslov grading, and are in Maslov gradings $-2$ and $-1$ respectively. 
		\end{proposition}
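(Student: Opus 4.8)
The plan is to take the explicit cocycle representatives of $\D_n$ and $\Ff_n$ produced in the proof of Proposition~\ref{hoch1} (and transported to the subcomplex $C^*$ by Proposition~\ref{uni25}) and to verify, term by term, that under the grading hypotheses~\eqref{uni11} and~\eqref{uni12} every term shifts the total Maslov grading by one and the same amount. Recall that $\D_n \in C^{n+1}$ is represented by the sum of Gerstenhaber-type composites
\[
	\sum_{i + j = n + 2}\ \sum_{k}\ \mu_i\bigl(a_1, \dots, a_{k-1}, \mu_j(a_k, \dots, a_{k+j-1}), a_{k+j}, \dots, a_{n+1}\bigr)
\]
built from the lower operations $\mu_{\le n-1}$ (the terms that would involve $\mu_n$ assemble into the Hochschild differential $\delta\mu_n$ and are split off), and that $\Ff_n \in C^n$ is represented by a sum whose terms are each either a ``target'' composite $\mu_j^{\A^{\prime}}\bigl(f_{i_1}(\cdots), \dots, f_{i_j}(\cdots)\bigr)$ with $i_1 + \cdots + i_j = n$, or a ``source'' composite $f_l\bigl(a_1, \dots, \mu_m(a_k, \dots, a_{k+m-1}), \dots, a_n\bigr)$ with $l + m = n+1$; in either case exactly one structure map occurs, the remaining factors being components of $f$. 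The conventions~\eqref{uni11}--\eqref{uni13} are set up precisely so that each such elementary composite is Maslov-homogeneous and Maslov degrees add under substitution.

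The first observation to record is that, by~\eqref{uni11}, each $\mu_j$ viewed as a Hochschild $j$-cochain respects the Maslov grading with $m(\mu_j) = -1$ (matching~\eqref{uni13} at arity $j$), while by~\eqref{uni12} each $f_j$ respects the Maslov grading with $m(f_j) = 0$. The second is the elementary additivity: if $g$ is a Hochschild $p$-cochain with $m(g) = \alpha$ and $h$ a $q$-cochain with $m(h) = \beta$, then the substitution $g\circ_k h$ is a $(p+q-1)$-cochain with $m(g\circ_k h) = \alpha + \beta$, which is a one-line computation from~\eqref{uni13} (and iterating it handles composites with several inner insertions). Combining the two: every term of $\D_n$ is a composite $\mu_i\circ_k\mu_j$ of two structure maps, hence is Maslov-homogeneous of degree $(-1) + (-1) = -2$; every target term of $\Ff_n$ is a composite of the one structure map $\mu_j^{\A^{\prime}}$ (degree $-1$) with $j$ components $f_{i_s}$ (each degree $0$), and every source term of $\Ff_n$ is a composite of one component $f_l$ (degree $0$) with one structure map $\mu_m$ (degree $-1$); in both cases the total Maslov degree is $-1$.

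Since each representing cocycle is a sum of terms all of one Maslov degree, it respects the Maslov grading in the sense of~\eqref{uni13}, with $m(\D_n) = -2$ and $m(\Ff_n) = -1$; as these cocycles already lie in $C^{n+1}$ and $C^n$ respectively by Proposition~\ref{uni25}, they lie in $C^{n+1,-2}$ and $C^{n,-1}$, and passing to cohomology gives $\D_n \in H^{n+1,-2}(C^*)$ and $\Ff_n \in H^{n,-1}(C^*)$, which is the assertion. The argument is bookkeeping rather than anything deep --- over $\F_2$ there are no signs to track --- and the only steps that need a moment's care are: verifying that splitting off the $\delta\mu_n$ part (respectively the $\delta f_n$ part) and restricting to the unital subcomplex $C^*$ preserves homogeneity, which holds because $\delta$ respects the Maslov grading and the truncation to $C^*$ only deletes terms; and noting that a differential $\mu_1$, should the deformation carry one, is also covered, since~\eqref{uni11} at $j=1$ forces $m(\mu_1) = -1$ and the corresponding composites still contribute $-2$ (respectively $-1$). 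Finally I would remark that this is exactly what makes the grading hypotheses self-propagating: once $\D_n$ is known to sit in Maslov degree $-2$, the cochain $\mu_n$ with $\delta\mu_n = \D_n$ provided by Proposition~\ref{uni25} may be chosen in Maslov degree $-1$, i.e.\ satisfying~\eqref{uni11}, and similarly $f_n$ may be chosen satisfying~\eqref{uni12}.
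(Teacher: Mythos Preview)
Your proof is correct and follows the same approach as the paper, which simply asserts that the result ``follows directly from~\eqref{uni13},~\eqref{uni11}, and~\eqref{uni12}''; you have supplied the bookkeeping that makes this explicit, namely that $\mu_j$ and $f_j$ are cochains of Maslov degree $-1$ and $0$ respectively and that Maslov degree is additive under Gerstenhaber-type insertion. Your extra remarks about the $\mu_1$ case and the self-propagation of the grading hypotheses are accurate and worth keeping, though the paper does not spell them out.
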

		
		This follows directly from~\eqref{uni13},~\eqref{uni11}, and~\eqref{uni12}. As a result, we have the following condition identifying precisely when we can (uniquely) deform a Maslov graded associative algebra with idempotent elements to get an $\A_{\infty}$-algebra where all operations satisfy~\ref{uni24}.
		\begin{corollary}\label{uni27}
			Let $A$ be an associative algebra with idempotent elements, equipped with Maslov grading $m$. Let $\A$ be an $A_n$-deformation of $A$ equipped with Maslov grading, and such that for each $1 \leq j < n$, the operations $\mu_j$ satisfy~\ref{uni24} as well as~\eqref{uni11} from Proposition~\ref{uni28}. Then:
			\begin{itemize}
				\item \emph{(Existence of deformation)} If $H^{r, -2} (C^*) = 0$ for each $r \geq n + 1$, then $\A$ extends to an $\A_{\infty}$-algebra structure on $A$ such that each $\mu_j$ satisfies~\ref{uni24} and~\eqref{uni11};
				
				\item \emph{(Uniqueness of deformation)} If $H^{r, -1}(C^*) = 0$ for each $m \geq n$, then any two $\A_{\infty}$-algebra structures on $A$ extending $\A$, such that each operation in either structure satisfies~\ref{uni24} and~\eqref{uni11}, are isomorphic as $\A_{\infty}$-algebras.
			\end{itemize}
		\end{corollary}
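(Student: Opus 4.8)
The plan is to run the same obstruction-theoretic induction that proves Corollaries~\ref{hoch2} and~\ref{uni26}, but to carry it out inside the Maslov-graded subcomplexes, using Proposition~\ref{uni25} to produce the obstruction cocycles and Proposition~\ref{uni28} to pin down the Maslov degree in which each obstruction lives. Throughout, the inductive hypothesis will record not just that we have an $A_r$-deformation (resp. an order-$r$ homomorphism), but that \emph{all} of its operations (resp. components) satisfy~\ref{uni24} together with~\eqref{uni11} (resp.~\eqref{uni12}); this is exactly what is needed in order to keep re-applying Proposition~\ref{uni28} at the next stage.

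For the existence statement I would induct on $r \geq n$, the claim being that $\A$ extends to an $A_r$-deformation all of whose operations satisfy~\ref{uni24} and~\eqref{uni11}. For the base case $r=n$: if $\mu_n$ does not already meet these conditions, Proposition~\ref{uni25} provides the obstruction cocycle $\D_n$, which by Proposition~\ref{uni28} sits in $H^{n+1,-2}(C^*)$; since this group vanishes by hypothesis, $\D_n = \delta(\mu_n')$ for a cochain $\mu_n'$ which, being a bounding cochain inside the Maslov-graded subcomplex, automatically lies in $C^{n}$ and in the appropriate Maslov grading, hence satisfies~\ref{uni24} and~\eqref{uni11}; we replace $\mu_n$ by $\mu_n'$. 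For the inductive step, given an $A_r$-deformation with all operations satisfying~\ref{uni24} and~\eqref{uni11}, Proposition~\ref{uni25} produces a cocycle $\D_{r+1}\in C^{r+2}$ whose vanishing in cohomology is equivalent to the existence of a $\mu_{r+1}$ satisfying~\ref{uni24} and extending to an $A_{r+1}$-algebra; Proposition~\ref{uni28} places $[\D_{r+1}]$ in $H^{r+2,-2}(C^*)$, which vanishes by hypothesis (as $r\geq n$ forces $r+2\geq n+1$). A bounding cochain for $\D_{r+1}$ can again be chosen inside the Maslov-graded subcomplex, so the resulting $\mu_{r+1}$ also satisfies~\eqref{uni11}; by Proposition~\ref{uni25} the cohomology class of the next obstruction is independent of this choice, so the induction is well-posed. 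Because $\mu_{r+1}$ has arity $r+1$, adjoining it leaves $\mu_1,\ldots,\mu_r$ and all lower $\A_{\infty}$-relations untouched, so the tower of $A_r$-deformations stabilizes in each arity and assembles to an $\A_{\infty}$-structure on $A$ of the required form.

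For the uniqueness statement — where ``$m\geq n$'' should read ``$r\geq n$'' — I would take two $\A_{\infty}$-structures $\A'$ and $\A''$ on $A$ that extend $\A$ with all operations satisfying~\ref{uni24} and~\eqref{uni11}, and build an $\A_{\infty}$-isomorphism $f:\A'\to\A''$ by induction on the order of a homomorphism, starting from the identity $A_n$-homomorphism (legitimate since $\A'$ and $\A''$ agree through arity $n$), with $f_1=\id$ preserving $C^*$. At the inductive step, Proposition~\ref{uni25} produces the obstruction cocycle $\Ff_{r}\in C^{r}$ to extending an $A_{r-1}$-homomorphism to an $A_r$-homomorphism, and Proposition~\ref{uni28} (via~\eqref{uni12}) places $[\Ff_r]$ in $H^{r,-1}(C^*)$; this vanishes by hypothesis, and, choosing the bounding cochain in the Maslov-graded subcomplex, the new component $f_r$ can be taken so that~\eqref{uni12} persists and $C^*$ is still preserved. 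As before the process converges, since $f_r$ does not disturb the lower components, and the resulting $\A_{\infty}$-homomorphism is an isomorphism because $f_1=\id$.

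I do not anticipate a genuinely hard step: the mathematical content is entirely contained in Propositions~\ref{uni25} and~\ref{uni28}, and the corollary is their formal consequence, in exact parallel with how Corollary~\ref{hoch2} follows from Proposition~\ref{hoch1}. The only thing needing care — the ``main obstacle'', such as it is — is the bookkeeping: one must maintain, as part of each inductive hypothesis, that the partially constructed operations and homomorphism components satisfy~\ref{uni24} and~\eqref{uni11} (resp.~\eqref{uni12}), so that Proposition~\ref{uni28} stays applicable and the obstruction classes are guaranteed to land in $H^{r,-2}(C^*)$ and $H^{r,-1}(C^*)$ rather than merely in $H^r(C^*)$; and one must track the index shifts (an $A_r$-deformation has its next obstruction in cohomological degree $r+2$, an order-$(r-1)$ homomorphism in degree $r$) to confirm that the stated ranges $r\geq n+1$ and $r\geq n$ are precisely what the induction consumes.
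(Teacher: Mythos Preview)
Your proposal is correct and takes exactly the same approach as the paper: the paper's proof consists of the single sentence ``This follows from Proposition~\ref{uni25} and Proposition~\ref{uni28} in the same way that Corollary~\ref{hoch2} follows from Proposition~\ref{hoch1},'' and your write-up simply unpacks that obstruction-theoretic induction in detail.
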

		This follows from Proposition~\ref{uni25} an Proposition~\ref{uni28} in the same way that Corollary~\ref{hoch2} follows from Proposition~\ref{hoch1}.
		
		Finally, in order to effectively compute the relevant cohomology groups $H^{*,*}(C^*)$ in Section~\ref{HochComp}, we need to modify Proposition~\ref{hoch3} to involve $H^*(C^*)$ instead of the full Hochschild cochain complex. 
		
		 With this modification, the desired result is as follows:
		\begin{proposition}\label{uni29}
			Let $A$ be an associative algebra with idempotent elements, as above. Suppose $A$ is equipped with a filtration such that $F_n A$ is finite dimensional for each $n$. Then there is an isomorphism of chain complexes
			\begin{equation}\label{uni30}
				A[V_0, \ldots, V_{N + 1}] \otimes \tCob(A) \cong C^*[V_0, \ldots, V_{N + 1}]
			\end{equation}
			where the differential on the left hand side of~\eqref{uni30} is given by
			\begin{equation}\label{uni31}
				\delta(a \otimes \xi) = a \otimes (\delta^{\tCob} \xi)  + \sum_{ \rho \in A : \:\ell(\rho) = 1} (\rho a \otimes (\xi \otimes \rho^*) + a \rho \otimes (\rho^* \otimes \xi)).
			\end{equation} 
			and on the right hand side of~\eqref{uni30}, the $V_i$ are viewed as elements of the ground ring. 
		\end{proposition}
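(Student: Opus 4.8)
The plan is to deduce Proposition~\ref{uni29} from Proposition~\ref{hoch3} by checking that the chain isomorphism constructed there restricts to the relevant unital subcomplexes. Write $\vec{V} = (V_0, \ldots, V_{N+1})$ and let $\Phi \colon A[\vec{V}] \otimes \Cob(A) \xrightarrow{\sim} \HC^*(A[\vec{V}])$ denote the isomorphism of Proposition~\ref{hoch3}, in which the source carries the differential~\eqref{hoch6}. The first step is to record the explicit form of $\Phi$ in enough detail to track idempotents: on a generator $a \otimes (a_1^* \otimes \cdots \otimes a_n^*)$ it produces the bimodule-homomorphism-valued Hochschild cochain $b_1 \otimes \cdots \otimes b_n \mapsto \bigl(\prod_i a_i^*(b_i)\bigr)\,a$, and one reads off which string $b_1 \otimes \cdots \otimes b_n$ it is supported on from the support of $a_1^* \otimes \cdots \otimes a_n^* \in \Cob(A)$.

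The second step is the key bookkeeping observation. By definition $\tCob(A) \subseteq \Cob(A)$ is spanned by those $a_1^* \otimes \cdots \otimes a_n^*$ supported on \emph{composable} strings $b_1 \otimes \cdots \otimes b_n$ (final idempotent of $b_i$ equal to initial idempotent of $b_{i+1}$), and by definition $C^* \subseteq \HC^*(A)$ is spanned by the Hochschild cochains supported on \emph{exactly the same} composable strings. Hence $\Phi$ carries $A[\vec{V}] \otimes \tCob(A)$ into $C^*[\vec{V}]$, and since $\Phi^{-1}$ visibly preserves the composability support condition as well, this is an isomorphism of graded modules onto $C^*[\vec{V}]$. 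Equivalently — and this is the way I would actually phrase it — one can invoke Remark~\ref{cob71}: $\tBar$, $\tCob$ and $C^*$ are precisely the versions of $\bBar$, $\Cob$ and $\HC^*$ in which all tensor products are taken over the idempotent ring $\bigoplus_i \F_2 I_i$ rather than over $\F_2$, so re-running the inductive argument behind Proposition~\ref{hoch3} (i.e.\ Lemma~5.38 of~\cite{LOT2021}), filtration level by filtration level, with the tensor products over the idempotent ring produces~\eqref{uni30} directly.

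The third step is to check that the differential~\eqref{hoch6} preserves the subcomplex $A[\vec{V}] \otimes \tCob(A)$ and there reduces to~\eqref{uni31}. The term $a \otimes (\delta^{\Cob}\xi)$ restricts to $a \otimes (\delta^{\tCob}\xi)$ because $\tCob(A)$ is by construction a subcomplex of $\Cob(A)$. For the length-one terms $\rho a \otimes (\xi \otimes \rho^*)$ and $a\rho \otimes (\rho^* \otimes \xi)$: when $\xi$ is supported on composable strings, appending $\rho^*$ at the appropriate end again yields composable strings unless the outermost idempotent of $\xi$ fails to match the relevant idempotent of $\rho$; but in precisely that case the idempotent relation~\ref{uni23}, together with the unitality of the cochains in $C^*$ (the analogue of~\ref{uni24}), forces $\rho a = 0$ (resp.\ $a\rho = 0$). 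Thus every surviving term of~\eqref{hoch6} lies in $A[\vec{V}] \otimes \tCob(A)$ and the resulting formula is~\eqref{uni31}. Combining the three steps, $\Phi$ restricts to a chain isomorphism $A[\vec{V}] \otimes \tCob(A) \xrightarrow{\sim} C^*[\vec{V}]$ with the stated differentials.

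The main obstacle I expect is the bookkeeping in the third step: making the idempotent relations perform exactly the cancellation needed for~\eqref{hoch6} to descend to~\eqref{uni31}, given that the coefficients $a$ live in the polynomial ring $A[\vec{V}]$ and hence do not carry a single initial/final idempotent until one decomposes $a$ into homogeneous idempotent components. Handling this carefully, and confirming that the inductive, filtration-level-by-filtration-level construction underlying Proposition~\ref{hoch3} is compatible with passing to the composable subcomplexes at every stage, is where the real work lies; the rest is unwinding the definitions of $\tBar$, $\tCob$ and $C^*$.
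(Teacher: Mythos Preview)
Your proposal is correct and matches the paper's approach: the paper simply states ``This is obvious from the definitions,'' and what you have written is precisely the unwinding of that claim---restricting the isomorphism of Proposition~\ref{hoch3} to the idempotent-compatible subcomplexes via Remark~\ref{cob71}. Your third step and closing caveat about decomposing $a$ into idempotent-homogeneous pieces are exactly the bookkeeping the paper is suppressing.
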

		
		This is obvious from the definitions. In Section~\ref{HochComp}, we will use this fact, along with Lemma~\ref{uni2}, which states that each of the algebras $\A_0$ and $\B_0$ is quasi-isomorphic to the cobar of the other, to compute Hochschild cohomology of $\A_0$ and $\B_0$.
		
		Finally, note that the Maslov grading on a tensor product is defined in the obvious way, that is, as
		\begin{equation}\label{uni14}
			m(a \otimes b) = m(a) + m(b).
		\end{equation}
		If $A$ is an associative algebra equipped with Maslov grading $m: A \to \Z$, then the on $\tCob(A)$ is given by
		\begin{equation}\label{uni32}
			m(a^*) = - m(a) - 1
		\end{equation}
		extended to tensor products via~\eqref{uni14}.

	\section{The Wrapped Fukaya category}\label{fukayaDef}
	
		The goal of this section is to present the general definitions and results related to the \emph{wrapped Fukaya category} $\W(M)$ of a particular class of symplectic manifold $(M, \omega)$. None of these are original to this paper -- see e.g.~\cite{LOT2021} and~\cite{AurFuk1} -- and the exposition mostly follows Section 6 of~\cite{LOT2021}. 
	
		\subsection{Definitions}\label{fukaya3}
	
		Let $(M, \omega)$ be a symplectic manifold with conical, cylindrical ends. Let $H$ be a Hamiltonian on $M$, such that near each boundary component of $M$, $H$ is of the form $H(r, x) = r^2$, where $M$ is parametrized as $(0, \infty] \otimes S^1$, near the boundary component. Let $\varphi^1$ denote the time-one flow map for $H$.  Then $\W(M)$ has:
		\begin{itemize}
			\item \textbf{Objects:} Simply connected Lagrangian submanifolds $L$ of $(M, \omega)$, which are conical near the boundary.
			\item \textbf{Homomorphisms:} For simply connected Lagrangians $L_0, L_1 \in \Ob(\W(M))$, 
			\[
				\Hom(L_0, L_1) = CF( \varphi^1(L_0), L_1),
			\] 
			that is, the set of intersections of the (once) wrapped version of $L_0$ with $L_1$; 
			\item \textbf{$\A_{\infty}$-composition map:} The $n$-fold composition map
			\[
				\Hom(L_0, L_1) \otimes \cdots \otimes \Hom(L_{n - 1}, L_n) \to \Hom(L_0, L_n)
			\]
			is an (oriented) count of $(n + 1)$-gons with boundary along
			\[
				\varphi^n (L_0), \varphi^{n - 1} (L_1), \ldots, \varphi^1(L_{n - 1}), \text{ and then } L_0.
			\]
			with the usual coherent, right-handed orientations. For examples of such $(n + 1)$-gons with proper orientation, see the figures surrounding Lemmas~\ref{end1},~\ref{end2},~\ref{end3}, and~\ref{end4}.
		\end{itemize}

		\subsection{Gradings}\label{fukaya2}
		
		The next step is to define a grading for the wrapped Fukaya category of a fixed symplectic manifold $(M, \omega)$. This grading will be constructed in two steps. First, we will construct a groupoid $\G$ using lifts of Lagrangians on $M$, and show that this can be used to define a \emph{groupoid grading} for $\W(M)$. Then, by fixing base-points will define a bona-fide group $G$ from $\G$, and use the $\G$-grading on $\W(M)$ to induce a grading by $G$.
		
		In practice, the group $G$ will be $\Z \times F_{N + 1}$ (where $F_{N + 1}$ is the free group on $(N + 1)$ elements). In Section~\ref{proof}, we will also have $M = S^2 \smallsetminus \{p_0, \ldots, p_{N + 1}\}$, and algebra elements in $\A_0$ and $\B_0$ will correspond to endomorphisms of $\W(M)$. In this case, the $\Z$-component of the grading for a given homomorphism of $\W(M)$ will indicate the Maslov grading of the corresponding algebra element.
		
		Again, the exposition here mostly follows Section 6.2 of~\cite{LOT2021}, and general results are cited without proof.
		
		First, we need to define the notion of a grading for an $\A_{\infty}$-category by a groupoid. Let $\Cc$ be an $\A_{\infty}$ category with composition operations denoted by $\mu_n$, and let $\G$ be a groupoid with central element $\lambda$ and with its operation written as multiplication. Then a \emph{grading $\gr$ of $\Cc$ by $\G$} consists of the following data:
		\begin{itemize}
			\item \textbf{Grading for objects:} For each $L \in \Ob(\Cc)$, a corresponding object $s(L) \in \G$;
			
			\item \textbf{Grading for homomorphisms:} For each pair of $L_0, L_1 \in \Ob(\Cc)$, a decomposition of the set of homomorphisms $L_0 \rightsquigarrow L_1$ as
			\[
				\Hom(L_0, L_1) = \bigoplus_{\gamma \in \Hom_{\G}(s(L_0), s(L_1))} \Hom(L_0, L_1; \gamma);
			\]
			With this notation, define $\gr(x) = \gamma$ for each homogeneous element $x \in \Hom(L_0, L_1; \gamma)$;
		\end{itemize}
		This data is required to satisfy the added property that for each $\mu_n$ and each set of homogeneous elements $x_i \in \Hom(L_i, L_{i + 1})$ ($1 \leq i \leq n$), we have
		\begin{equation}\label{fukDef4}
			\gr(\mu_n(x_1, \ldots, x_n)) = \lambda^{n - 2} \gr(x_1) \cdots \gr(x_n)
		\end{equation}
		Note, in particular, that~\eqref{uni11} is precisely~\eqref{fukDef4} for $\G = \Z$, $\lambda = 1$, and with the group operations written additively. 
		
		Now, the specific groupoid $\G$ by which we will grade $\W(M)$ is defined as follows. Let $\Lag(M) \subseteq TM$ denote the space of Lagrangian submanifolds in $M$. Note that $\Lag(M)$ is a bundle over $M$ with fiber $\lLag(n) \subseteq T_x M = \R^{2n}$ (given $\dim M = 2n$), i.e. the space of Lagrangian subspaces of $\R^{2n}$. In particular, for $M =S^2 \smallsetminus \{p_0, \ldots, p_{n + 1}\}$, the case which arises when we consider the star diagrams, we get $n = 1$, so $\Lag(M)$ is just a circle bundle. In fact, for $M =S^2 \smallsetminus \{p_0, \ldots, p_{n + 1}\}$, $\Lag(M)$ is a trivial circle bundle, and we get
		\begin{equation}\label{fukDef10}
			\Lag(M) \simeq S^1 \times \left( \bigvee_{n + 1} S^1\right),
		\end{equation}
		and choosing a basepoint $\tilde{b} \in \Lag(M)$,
		\begin{equation}\label{fukDef5}
			G := \pi_1 (\Lag(M), \tilde{b}) = \Z \times F_{N + 1}.
		\end{equation}
		
		Now, return to a general symplectic manifold $(M^{2n}, \omega)$. Note that given basepoints $b \in M, \: \tilde{b} \in \Lag(M)$, $\pi_1(M, b)$ acts freely on $\pi_1(\Lag(M), \tilde{b})$, so that $\pi_1(\Lag(M), \tilde{b})$ is a $\Z$-central extension of $\pi_1(M, b)$. We can therefore choose an element $\lambda \in \pi_1(\Lag(M), \tilde{b})$ which projects down to a generator of $\pi_1(\lLag(n))$, an element with Maslov index (in the classical sense) equal to 1. 
		
		Define $\G$ to be the groupoid whose elements consist of all the simply connected Lagrangians of $M$. Morphisms between elements $L_0$ and $L_1$ of $\G$ are defined as follows. For each Lagrangian $L \subseteq (M, \omega)$, there is a canonical lift $\tilde{L}$ of $L$ to the bundle $\Lag(M)$. For $L_0, L_1 \in \G$, let $\tilde{L}_0, \tilde{L}_1$ denote the canonical lifts to $\Lag(M)$. Define $S(L_0, L_1)$ to be the collection of paths $\{[0, 1], \{0\}, \{1\}\} \to \{\Lag(M), \tilde{L}_0, \tilde{L}_1\}$, and define the homomorphisms between $L_0$ and $L_1$ in $\G$ as $\Hom(L_0, L_1) = S(L_0, L_1)$. (We will use $S(L_0, L_1)$ in other contexts, so we want independent notation for it.) Composition 
		\[
			\Hom(L_0, L_1) \times \Hom(L_1, L_2) \to \Hom(L_0, L_2)
		\]
		in $\G$ is defined as $(\gamma_1, \gamma_2) \mapsto \gamma_1 * \eta * \gamma_2$, where $\eta$ is the (canonical, because $L_1$ is simply connected) path in $\tilde{L}_1$ from $\gamma_1(1)$ to $\gamma_2(0)$.
		
		Now, define the grading $\gr$ of $\W(M)$ by $\G$ in the following way. First, since $\Ob(\W(M))$ is the set of (simply connected) Lagrangians on $M$, we just define $s(L) = L$ for each $L \in \Ob(\W(M)) = \G$. Next, we define the grading for homomorphisms of $\W(M)$; that is, given $L_0, L_1 \in \Ob(\W(M))$, and $x \in \varphi^1 (L_0) \cap L_1$, we want to define $\gr(x) \in S(L_0, L_1)$. Define the following intermediate maps:
		\begin{itemize}
			\item $\eta$: For $L \in \Ob(\W(M))$, $m, n \in \Z_{\geq 0}$ $x \in \varphi^m(L)$, let $\eta_x^{\varphi^m(L) \to \varphi^n(L)}$ be the path from $T_x\varphi^m(L)$ to $T{\varphi^{n - m}(x)}\varphi^n(L)$ induced by the Hamiltonian isotopy $H$. In practice, we will usually have $m = 0$ and $n = 1$;
			
			\item $\gamma$: For $L_0, L_1 \in \Ob(\W(M))$ and $x \in L_0 \cap L_1$, let $\gamma_x^{L_0 \to L_1}$ be the path in $\lLag(T_xM)$ from $T_x L_0$ to $T_x L_1$ which is positively oriented with respect to the underlying almost complex structure; when $\dim M = 2$, this will mean that $\gamma_x^{L_0 \to L_1}$ turns $T_x L_0$ clockwise in $\lLag(T_xM)$ until it reaches $T_x L_1$;
		\end{itemize}
		Now, for each $L_0, L_1 \in \Ob(\W(M))$, and $x \in \varphi^1 (L_0) \cap L_1$, we define
		\begin{equation}\label{fukDef6}
			\gr(x) = \eta_{\varphi^{-1}(x)}^{L_0 \to \varphi^1(L_0)} * \gamma_x^{\varphi^{1}(L_0) \to L_1}
		\end{equation}
		If (as we usually do) we want to require that for some fixed $q_0 \in L_0$, and $q_1 \in L_1$, and each $x \in \varphi^1 (L_0) \cap L_1$, $\gr(x) \in S(L_0, L_1)$ starts at $\tilde{q}_0 := T_{q_0} L_0$ and ends at $\tilde{q_1}:= T_{q_1} L_1$, we make the following additional definitions:
		\begin{itemize}
			\item $\epsilon$: Fix $L \in \Ob(\W(M))$ with lift $\tilde{L} \in \Lag(M)$, and $x, y \in L$. Define $\epsilon_{x \to y}^L$ to be path in $\tilde{L}$ from $T_x L$ to $T_y L$, which is unique up to reparametrization.
		\end{itemize}
		We then define
		\begin{equation}\label{fukDef7}
			\gr(x) = \epsilon_{q_0 \to \varphi^{-1}(x)}^{L_0} * \eta_{\varphi^{-1}(x)}^{L_0 \to \varphi^1(L_0)} * \gamma_x^{\varphi^{1}(L_0) \to L_1} * \epsilon_{x \to q_1}^{L_1}
		\end{equation}
		Moreover, $\gr$ is reasonable in the two following senses:
		\begin{proposition}\label{fukDef8}
		\emph{(Proposition 6.9 and Lemma 6.10 from~\cite{LOT2021})}
			\begin{enumerate}[label =(\alph*)]
				\item The $s: \Ob(\W(M)) \to \G$ and $\gr: \Hom_{\W(M)}(L_0, L_1) \to S(L_0, L_1)$ defined above determine a bona-fide grading; in particular, $\gr$ satisfies~\eqref{fukDef4};
				
				\item $\gr$ is well-defined up to Hamiltonian isotopy in the following sense: Let $L_0, L_n$ be two simply connected Lagrangians in $M$, and choose $x \in \varphi^1(L_0) \cap L_n$. Let $x'$ denote the point of $\varphi^n(L_0) \cap L_n$ corresponding to $x$ under dilation by the Hamiltonian $H$. Then, using~\eqref{fukDef7} to write
				\[
					\gr(x) = \epsilon_{q_0 \to \varphi^{-1}(x)}^{L_0} * \eta_{\varphi^{-1}(x)}^{L_0 \to \varphi^1(L_0)} * \gamma_x^{\varphi^{1}(L_0) \to L_n} * \epsilon_{x \to q_1}^{L_n}
				\]
				and
				\[
					\gr(x') = \epsilon_{q_0 \to \varphi^{-1}(x)}^{L_0} * \eta_{\varphi^{-1}(x)}^{L_0 \to \varphi^n(L_0)} * \gamma_{x'}^{\varphi^{1}(L_0) \to L_1} * \epsilon_{x' \to q_1}^{L_1}
				\]
				(using the fact that $\varphi^{-1}(x) = \varphi^{-n}(x')$), we have
				\begin{equation}\label{fukDef9}
					\gr(x) = \gr(x').
				\end{equation}
			\end{enumerate}
		\end{proposition}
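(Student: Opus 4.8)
The plan is to reduce both parts of Proposition~\ref{fukDef8} to homotopy-class statements about loops and paths in the Lagrangian bundle $\Lag(M)$, using throughout that the homotopy class rel endpoints of a path in $\Lag(M)$ is unchanged under a continuous deformation of the underlying geometric configuration. For part (a) the bookkeeping is immediate: $s(L) = L$ lands in $\Ob(\G)$, and for $x \in \varphi^1(L_0) \cap L_1$ formula~\eqref{fukDef7} produces a path in $\Lag(M)$ from $\tilde q_0 = T_{q_0} L_0$ to $\tilde q_1 = T_{q_1} L_1$, i.e.\ an element of $S(L_0, L_1) = \Hom_{\G}(s(L_0), s(L_1))$, so that $\Hom_{\W(M)}(L_0, L_1)$ decomposes over $\Hom_{\G}(s(L_0),s(L_1))$ as the definition of a $\G$-grading requires. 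The content is~\eqref{fukDef4}. Fix homogeneous $x_i \in \Hom(L_{i-1}, L_i)$ and an immersed $(n{+}1)$-gon $u$ contributing to $\mu_n(x_1, \ldots, x_n)$, with output corner $x_0$ at $L_0$; its boundary arcs lie successively on $\varphi^n(L_0), \varphi^{n-1}(L_1), \ldots, \varphi^1(L_{n-1})$ and finally on $L_0$, meeting at $x_1, \ldots, x_n, x_0$. Reading around $\partial u$, I would assemble a loop $\ell$ in $\Lag(M)$ by concatenating along each arc the canonical $\epsilon$-path of tangent planes to the relevant wrapped Lagrangian, and at each corner the clockwise turning path $\gamma$ of~\eqref{fukDef6}, interpolating at the jumps in wrapping level between adjacent arcs with the $\eta$-paths of the Hamiltonian isotopy (one such $\eta$ already appears in~\eqref{fukDef7}). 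Since $u$ is a disk, $\ell$ bounds in $M$, hence represents an element of $\ker(\pi_1(\Lag(M)) \to \pi_1(M)) \cong \pi_1(\lLag(n)) \cong \Z$, i.e.\ a power $\lambda^k$.

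The crux is to identify $k = n - 2$. This is a Gauss--Bonnet/Maslov-index count on the disk $u$: the total turning of the Lagrangian tangent frame around $\partial u$ is controlled by the Euler characteristic of the disk, and the clockwise convention built into $\gamma$ makes the $n{+}1$ corners contribute so that the net class is $\lambda^{n-2}$; the sanity checks are $n = 2$ (a triangle, exponent $0$, matching that $\mu_2$ is graded-multiplicative) and $n = 1$ (a bigon, exponent $-1$, matching that $\mu_1$ lowers the grading by one). Cutting $\ell$ open at $\tilde q_0$ and at the corners, and using that composition in $\G$ already inserts the canonical connecting $\eta$-path inside each simply connected $\tilde L_i$, I would then rearrange $\ell \simeq \mathrm{const}$ into $\gr(\mu_n(x_1, \ldots, x_n)) = \lambda^{n-2}\,\gr(x_1)\cdots\gr(x_n)$. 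I expect this local index computation to be the main obstacle: all the orientation and wrapping-level conventions have to match exactly both for the loop $\ell$ to close up and for the exponent to come out as $n - 2$ rather than some other affine function of $n$.

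For part (b) I would interpolate through the family $\{\varphi^t(L_0)\}_{t \in [1, n]}$. Near the boundary, where $\varphi^t$ is a dilation, the intersection point $x_t \in \varphi^t(L_0) \cap L_n$ deforming from $x_1 = x$ is $\varphi^{t-1}(x)$, so it moves continuously to $x_n = \varphi^{n-1}(x) = x'$, while its $L_0$-preimage $\varphi^{-t}(x_t) = \varphi^{-1}(x)$ stays fixed (consistently with $\varphi^{-1}(x) = \varphi^{-n}(x')$). Feeding this family into~\eqref{fukDef7} yields a continuous one-parameter family of paths in $\Lag(M)$, all from $\tilde q_0 = T_{q_0} L_0$ to $\tilde q_1 = T_{q_1} L_n$, in which the $\epsilon^{L_0}$-prefix is constant while the $\eta$- and $\gamma$-terms and the initial point of the $\epsilon^{L_n}$-suffix vary continuously in $t$. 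Being paths with fixed endpoints in a one-parameter family, they all represent the same element of $S(L_0, L_n)$, which is precisely~\eqref{fukDef9}. The only obstacle here is to make the phrase ``corresponding point under dilation'' precise, i.e.\ to check that this intersection point persists and varies continuously over the whole isotopy $t \in [1, n]$; granted that, homotopy-invariance of the path class is automatic.
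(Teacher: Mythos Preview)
The paper does not actually prove Proposition~\ref{fukDef8}: immediately after the statement it remarks that the result ``holds in an entirely general setting'' and says ``We therefore do not repeat the proof here,'' deferring entirely to Proposition~6.9 and Lemma~6.10 of~\cite{LOT2021}. So there is no proof in the paper to compare your proposal against.

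That said, your sketch is essentially the standard argument (and is in outline what~\cite{LOT2021} does). For part~(a), assembling the loop $\ell$ in $\Lag(M)$ around $\partial u$, observing that its projection to $M$ is null-homotopic via the disk $u$ so that $[\ell]$ lies in $\pi_1(\lLag(n))\cong\Z$, and then identifying the exponent as $n-2$ via the Maslov index of a rigid $(n{+}1)$-gon, is exactly the right strategy; your sanity checks at $n=1,2$ are the correct ones. You are also right to flag the exponent computation as the one place where the orientation and corner conventions have to be tracked carefully --- that is indeed where the work is. For part~(b), the one-parameter homotopy through $\{\varphi^t(L_0)\}_{t\in[1,n]}$ with fixed endpoints in $\Lag(M)$ is precisely the argument, and your observation that $\varphi^{-t}(x_t)=\varphi^{-1}(x)$ stays fixed is the reason the $\epsilon^{L_0}$-prefix is constant. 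The only genuine caveat is the one you already name: one must know that the intersection point persists transversely along the whole isotopy, which in the conical-end model (where $\varphi^t$ acts by dilation near the boundary) is immediate.
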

		
		This proposition holds in an entirely general setting (provided we only consider simply connected Lagrangians, and that $(M, \omega)$ satisfies the conditions from Section~\ref{fukaya3}). We therefore do not repeat the proof here.
		
		Proposition~\ref{fukDef8} shows that the grading $\gr$ behaves reasonably. It now remains to define a group $G$ related to $\G$, such that $\gr$ induces a $G$ grading on $\W(M)$. As noted above, we define $G$ as
		\[
			G = \pi_1(\Lag(M), \tilde{b}),
		\]
		where $\tilde{b} \in \Lag(M)$ is a fixed basepoint. The $G$-grading is defined as follows. For each simply connected Lagrangian $L \subseteq M$ with canonical lift $\tilde{L} \in \Lag(M)$, choose a basepoint $\tilde{b}_L \in \tilde{L}$. Choose also a fixed anchor path $\eta_L \subseteq \Lag(M)$ for $L$, which goes from $\tilde{b}$ to $\tilde{b}_L$. For each $L_0, L_1 \in \Ob(\W(M))$, we can now make the identification:
		\begin{center}
		\begin{tikzcd}
			f: S(L_0, L_1) \arrow[r, "\cong"] & G
		\end{tikzcd}
		\end{center}
		in the following way: write $\nu_i$ ($i = 0, 1$) for paths in $\tilde{L}_i$ from $\tilde{b}_{L_i}$ to $\gamma(i)$, and then define
		\[
			f(\gamma) = \eta_{L_0} * \nu_0 * \gamma * \overline{\nu_1} * \overline{\eta_{L_1}}
		\]
		where as usual $\overline{\nu_1}, \overline{\eta_{L_1}}$ are the reverse of these paths. So $f(\gamma)$ is:
		\begin{itemize}
			\item $\eta_{L_0}: \tilde{b} \rightsquigarrow \tilde{b}_{L_0}$ then
			\item $\nu_0: \tilde{b}_{L_0} \rightsquigarrow \gamma(0)$ along $\tilde{L}_0$, then
			\item $\gamma: \gamma(0) \rightsquigarrow \gamma(1)$, then
			\item $\overline{\nu_1}: \gamma(1) \rightsquigarrow \tilde{b}_{L_1}$, along $\tilde{L}_1$, then lastly
			\item $\overline{\eta_{L_1}}: \tilde{b}_{L_1} \rightsquigarrow \tilde{b}$.
		\end{itemize}
		Clearly, this defines a bijection between $S(L_0, L_1)$ and $G$. Moreover, since each $L_i$ is simply connected, $f$ is independent of choice of $\nu_0, \nu_1$. Therefore the $\G$-grading $\gr$ on $\W(M)$ induces a $G$-grading, such that for each $x \in \Hom_{\W(M)}(L_0, L_1) = \varphi^1(L_0) \cap L_1$,
		\begin{equation}\label{fukDef11}
			\gr(x) = \eta_{L_0} * \epsilon_{\tilde{b}_{L_0} \to \varphi^{-1}(x)}^{L_0} * \eta_{\varphi^{-1}(x)}^{L_0 \to \varphi^1(L_0)} * \gamma_x^{\varphi^1(L_0) \to L_1} * \epsilon_{x \to \tilde{b}_{L_1}}^{L_1} * \overline{\eta_{L_1}}.
		\end{equation}
		
		Moreover, by Proposition~\ref{fukDef8}(a) and~\eqref{fukDef6}, the $\Z$-component of the grading of~\eqref{fukDef11} satisfies

	\section{Constructions with $\A$ and $\B$}\label{algs}
		The constructions in this section follow those in Sections 6 and 7 of~\cite{Kh2024}, except for two points. First, neither of the $\A_{\infty}$-algebras $\A$ and $\B$ constructed here include \emph{weighted $\A_{\infty}$-operations}. Weighted $\A_{\infty}$-operations will not be discussed further in this paper, except to note how the proofs of $\A_{\infty}$-relations for $\A$ and $\B$ can be adapted from the ones given in~\cite{Kh2024}; therefore we do not define them here. The second key difference is exposition that in constructing $\A$ and $\B$, we first define associative algebras $\A_0$ and $\B_0$, and then construct $\A$ and $\B$ as deformations of $\A_{0}$ and $\B_0$, respectively. 

For the remainder of the paper, fix $N > 2$, and a single ground ring $R = \F_2[V_0, V_{N + 1}]$.

		\subsection{The $\A_{\infty}$-algebra $\A$}\label{alphaAlg}

		First, define $\A_0$ to be an associative $R$-algebra with the following generators:
		\begin{itemize}
			\item $I_i$, $i = 1, \ldots, N$

			\item $U_i$, $i = 1, \ldots, N$;
	
			\item $s_i$, $i = 1, \ldots, N$
		\end{itemize}
		
		$\A_0$ is equipped with a \emph{Maslov grading} $m$ and an \emph{Alexander grading} $A$, which are defined as follows. The Maslov grading $m$ is a map $\A_0 \to \Z$ defined by:
		\begin{align*}
			m(a) &= 0 \text{ for all } a \in \A \\
			m(V_0) &= 2N - 2 \\
			m(V_{N + 1}) &= 2
		\end{align*}
		The Alexander grading is a \emph{homological grading}; that is, for a choice of $2N$ distinct points $p_1, \ldots p_{2N}$ arranged in counterclockwise orientation in $S^1$, $A$ is a map $\A_0 \to H_0(S^1 \smallsetminus \{p_1, \ldots, p_{2N}\}; \Z) \cong\Z^{2N}$. Label the generators of $H_0(S^1 \smallsetminus \{p_1, \ldots, p_{2N} \}; \Z)$ as $\h_1, \ldots, \h_N$. Then define
		\begin{align*}
			A(I_j) = A(U_j) &= \h_{2j - 1} \text{ for each } 1 \leq j \leq N \\
			A(s_j) &= \h_{2j} \text{ for each } 1 \leq j \leq N \\			A(V_0) &= \sum_{j = 1}^{2N} \h_j \\
			A(V_{N + 1}) &= \sum_{j  = 1}^{N} \h_{2j}
		\end{align*}

		With generators as above, we define $\mu_2$ in the following way (writing the the $\mu_2$ as common multiplication):
		\begin{align*}
			I_i I_j &= \begin{cases} I_i & \text{ if } i = j \\ 0 & \text{ otherwwise} \end{cases} \\
			I_j U_i = U_i I_j &= \begin{cases} U_i & \text{ if } i = j \\ 0 & \text{ otherwise} \end{cases} \\
			I_j s_i &= \begin{cases} s_i & \text{ if } i = j \\ 0 & \text{ otherwise} \end{cases} \\
			s_i I_j &= \begin{cases} s_i & \text{ if } i = j - 1 \\ 0 & \text{ otherwise} \end{cases} \\
			U_i U_j &= 0 \text{ unless } i = j \\
			U_i s_j &= s_j U_i = 0 \text{ for all } i, \: j \\
			s_i s_j &= 0 \text{ unless } i = j - 1; 
		\end{align*}
		The $I_j$ are called \emph{idempotents}. Fix $1 \leq i < j \leq N$. Define
		\begin{equation}\label{alg1}
			s_{ij} = s_i s_{i + 1} \cdots s_{j - 1}
		\end{equation}
		Define $s_{N1} = s_N$. In the notation of~\eqref{alg1}, $s_i = s_{i(i+1)}$, for each $1 \leq i \leq N$, provided $(i +1)$ is counted $\emm N$. For $1 \leq j \leq i \leq N$, define
		\begin{equation}\label{alg2}
			s_{ij} = s_{iN} \cdot s_{N} \cdot s_{1j}
		\end{equation}
		Alternately, definitions~\eqref{alg1} and~\eqref{alg2} may be summarized by saying that for each $1 \leq i, j \leq N$, 
		\begin{equation}\label{alg3}
			s_{ij} := s_i s_{i  +1} \cdots s_{j - 1},
		\end{equation}
		where all indices are counted $\emm N$. However, definitions~\eqref{alg1} and~\eqref{alg2} are, perhaps, clearer.
		
		Additionally, define
		\[
			U_4 = \sum_{i = 1}^N s_{ii},
		\]
		so $U_4$ has one component starting / ending in each idempotent. Now $\{I_{i}\}_{i = 1}^N$ is an idempotent set in the sense of Section~\ref{CobDef}, and we define $\A_+ = \A_0 \smallsetminus \{I_i\}_{i = 1}^N$, and the augmentation map $\epsilon: \A_0 \to R$ to have $\ker \epsilon = \A_+$.

		Next, fix a non-zero element $a \in \A_0$. Because $a$ is a product of $I_j^{\prime}$s, $U_j^{\prime}$s, and $s_j^{\prime}$s, there exists unique $1 \leq j_1, j_2 \leq N$ such that 
		\begin{align*}
			I_{j_1} a &\neq 0, \\
			a I_{j_2} &\neq 0.
		\end{align*}
		Define the \emph{initial idempotent of $a$} to be $I_{j_1}$, and define the \emph{final idempotent of $a$} to be $I_{j_2}$. Often, by abuse of notation, we will simply refer to $j_1$ and $j_2$, respectively, as the initial and final idempotents of $a$.

		Next we define $\A$ as a deformation of $\A_0$; that is, as an $\A_{\infty}$-algebra which is equal to $\A_0$ as an associative $R$-algebra (and is equipped with the same Maslov and Alexander grading as $\A_0$) but now has higher operations.  Higher operations on $\A$ are defined in Sections 4.2 and 4.3 of~\cite{Kh2024}. Non-zero higher operations are defined to be in one-to-one correspondence with \emph{allowable unweighted planar graphs}. However, we will not use these graphs further in this paper, so we simply summarize the key results about $\A_{\infty}$-operations on $\A$ below. 
		
		To do this, we need a few definitions. For $a \in A$, suppose 
		\[
			A(a) = \sum_{j = 1}^{2N} k_j \h_j.		\]
		Then define the \emph{length}, $\ell$, of $a$ as: 
		\begin{equation}\label{alg61}
			\ell(a) = \sum_{j = 1}^{2N} k_j.
		\end{equation}
		Note that for each $a \in A$, $\ell(a) > 0$, and that $\ell(a) = 0$ if and only if $a$ is one of the basic elements $U_i, s_i$. Next, fix a sequence $(a_1, \ldots, a_n) \in \A$, $n > 2$. Define
		\begin{equation}\label{alg62}
			A(a_1, \ldots, a_n) = \sum_{i = 1}^n A(a_i).
		\end{equation}
		This sequence is defined to be \emph{centered} 
		\begin{equation}\label{alg6}
			\sum_{i = 1}^n \ell(a_i) = j \cdot 2N,
		\end{equation}
		for some $j \in \N$, \emph{left-extended} if we can write $a_1 = a_1^{\prime} a_1^{\prime \prime}$ such that $(a_1^{\prime \prime}, a_2, \ldots, a_n)$ is a centered sequence, and \emph{right-extended} if we can write $a_n = a_n^{\prime \prime}a_n^{\prime}$ such that $(a_1, \ldots, a_{n - 1}, a_n^{\prime \prime})$ is a centered sequence. Note that when $(a_1, \ldots, a_n)$ is left-extended, the decomposition $a_1 = a_1^{\prime} a_1^{\prime \prime}$ is unique with the property that $(a_1^{\prime \prime}, a_2, \ldots, a_n)$ is centered. Likewise when $(a_1, \ldots, a_n)$ is right-extended, the decomposition $a_n = a_n^{\prime \prime} a_n^{\prime}$ is unique with the property that $(a_1, \ldots, a_{n - 1}, a_n^{\prime prime})$ is centered.

		We are now ready to discuss higher operations. The following lemma is adapted from Theorem 4.1 of~\cite{Kh2024}:
		\begin{lemma}\label{alg4}
			Fix $a_1, \ldots, a_n \in \A$, for $n \geq 2$. Then $\mu_n(a_1, \ldots, a_n) \neq 0$ if and only if the following conditions hold:
			\begin{enumerate}[label = (\roman*)]
				\item \emph{(The idempotents match up)} The initial idempotent of $a_i$ is the final idempotent of $a_{i - 1}$ for each $1 \leq i \leq n$;

				\item \emph{(The Maslov grading works out)} $n = j(2N - 2) + 2$, for $j \in \N$;

				\item \emph{(Extension)} The sequence $(a_1, \ldots, a_n)$ is either centered, or it is left- or right-extended, but not both;
				
				\item \emph{(The Alexander grading works out)} The condition varies depending on whether the sequence $(a_1, \ldots, a_n)$ is centered, left- or right-extended.
				\begin{itemize}
					\item	If centered: $A(a_1, \ldots, a_n) = j \cdot \sum_{i = 1}^{2N} \h_i$;
					
					\item If left-extended: Write $a_1 = a_1^{\prime} a_1^{\prime \prime}$ such that $(a_1^{\prime \prime}, a_2, \ldots, a_n)$ is centered. Then $A(a_1, \ldots, a_n) = A(a_1^{\prime}) + j \cdot \sum_{i = 1}^{2N} \h_i$ (for the same $j$ as in (ii));

					\item If right-extended: Write $a_n = a_n^{\prime \prime} a_n^{\prime}$ such that $(a_1, \ldots, a_{n - 1}, a_n^{\prime \prime})$ is centered. Then $A(a_1, \ldots, a_n) = A(a_n^{\prime}) + j \cdot \sum_{i = 1}^{2N} \h_i$, (again, for the same $j$ as in (ii));
				\end{itemize} 
			\end{enumerate}
		\end{lemma}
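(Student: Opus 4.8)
The plan is to deduce Lemma~\ref{alg4} from Theorem 4.1 of~\cite{Kh2024}, which characterizes the non-vanishing higher operations of the original, weighted algebra in terms of \emph{allowable unweighted planar graphs}, and then to check that passing to the smaller ground ring $R = \F_2[V_0, V_{N+1}]$ and discarding the weighted operations leaves the characterization of the unweighted $\mu_n$ intact. Concretely, I would first recall that for $n > 2$ the operation $\mu_n(a_1, \ldots, a_n)$ is, by construction, a count of allowable unweighted planar graphs whose boundary reads off $a_1, \ldots, a_n$ in cyclic order; such a graph can be drawn only when consecutive inputs are composable in $\A_0$, which is exactly condition~(i), so (i) is simultaneously necessary and part of the data defining a candidate graph. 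The content of the lemma is then that (ii)--(iv) describe precisely when an allowable graph with the given inputs exists and contributes.

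The second step is grading bookkeeping, which yields~(ii) and~(iv). The relevant facts are: $\mu_n$ shifts the Maslov grading by $n - 2$ (equation~\eqref{uni11}) and preserves the Alexander grading (equation~\eqref{alg62}); every generator $I_i$, $U_i$, $s_i$ has Maslov grading $0$; and the output of an unweighted higher operation is a power $V_0^{\,j}$, possibly multiplied on one side by the part of the first or last input not consumed by the ``centered core'' of the graph, with $m(V_0) = 2N - 2$ and $A(V_0) = \sum_{i=1}^{2N}\h_i$. Since the inputs all have Maslov grading $0$, comparing Maslov gradings of the two sides forces $j(2N - 2) = n - 2$, which is~(ii); comparing Alexander gradings then forces the appropriate case of~(iv), with the leftover factor identified as $a_1'$ (left-extended) or $a_n'$ (right-extended), and with the same $j$ as in~(ii).

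The remaining work is condition~(iii), which I expect to be the main obstacle, since it is the one genuinely combinatorial point rather than bookkeeping. It comes from the structure of an allowable planar graph: such a graph has a distinguished centered core whose boundary has length a positive multiple of $2N$ --- matching $\sum_{i} \ell(a_i) = j\cdot 2N$, as $\ell$ records the total Alexander degree and hence the number of boundary segments --- together with the option of attaching extra algebra at exactly one of its two ends, never at both. Verifying that an allowable graph is exactly one of centered, left-extended, or right-extended, and in particular ruling out simultaneous left- and right-extension, is a transcription of the corresponding argument in~\cite{Kh2024}, made simpler here because there are no weighted vertices to track. To finish, I would observe that restricting the ground ring to $\F_2[V_0, V_{N+1}]$ discards exactly those operations whose output involves $V_1, \ldots, V_N$, which in~\cite{Kh2024} are precisely the weighted ones; hence no unweighted $\mu_n$ is lost, and the list (i)--(iv) is exactly the unweighted part of Theorem 4.1 of~\cite{Kh2024}.
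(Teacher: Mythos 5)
Your proposal takes essentially the same route as the paper: the paper's own ``proof'' is simply the observation that Lemma~\ref{alg4} follows from Theorem 4.1 of~\cite{Kh2024}, adapted to the unweighted setting over the smaller ground ring $\F_2[V_0, V_{N+1}]$, and your write-up just fills in the details of that adaptation (planar-graph characterization, grading bookkeeping for (ii) and (iv) via~\eqref{alg7} and Alexander-grading preservation, and the centered/extended trichotomy for (iii)).
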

		
		Lemma~\ref{alg4} follows from Theorem 4.1 of~\cite{Kh2024}. It also follows from this theorem that each nonzero higher operation has the following output:
		\begin{equation}\label{alg5}
			\mu_n(a_1, \ldots, a_n) = \begin{cases}V_0^j &\text{ if } (a_1, \ldots, a_n) \text{ is centered;} \\ a_1^{\prime} V_0^j &\text{ if } (a_1, \ldots, a_n) \text{ is left-extended;} \\ V_0^j a_n^{\prime} & \text{ if } (a_1, \ldots, a_n) \text{ is right-extended;} \end{cases}
		\end{equation}
		with $n = j (2N -2) + 2$, and notation for left- and right-extended sequences of elements as in the statement of Lemma~\ref{alg4}. 
		
		Now, we have:
		\begin{proposition}\label{alg5}
			\emph{(Theorem 4.3 of~\cite{Kh2024})} With operations as specified in Lemma~\ref{alg4}, $\A$ satisfies $\A_{\infty}$-relations, and is a bona fide $\A_{\infty}$-algebra.
		\end{proposition}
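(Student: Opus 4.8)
The plan is to deduce this from Theorem~4.3 of~\cite{Kh2024}, which establishes the \emph{weighted} $\A_{\infty}$-relations for the full weighted algebra — call it $\A^{w}$ — over the larger ground ring $\F_2[V_0, \ldots, V_{N+1}]$. Since the $\A$ of this paper is obtained from $\A^{w}$ by (i) discarding all operations of positive weight and (ii) restricting scalars along $\F_2[V_0, V_{N+1}] \hookrightarrow \F_2[V_0, \ldots, V_{N+1}]$, what has to be checked is that the ordinary relation~\eqref{def1} is precisely the weight-$0$ component of the weighted relations, and that this component involves only operations and outputs that survive the restriction of scalars.

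First I would recall from~\cite{Kh2024} that the weighted $\A_{\infty}$-relations are graded by total weight: each weighted operation carries a nonnegative weight, the weight of a composite is the sum of the weights of its constituents, and the weighted relation splits as a sum of identities indexed by this total weight. Because weights are nonnegative, the weight-$0$ summand involves only weight-$0$ operations on both sides; but the weight-$0$ operations are, by definition, exactly the $\mu_n$ described in Lemma~\ref{alg4} (and $\mu_0 = 0$ throughout~\cite{Kh2024}). Hence the weight-$0$ summand of the weighted $\A_{\infty}$-relation for $\A^{w}$ is literally~\eqref{def1} for $\A$, which gives the relations.

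Second I would check compatibility with the change of ground ring. By the output formula~\eqref{alg5}, every nonzero $\mu_n$ with $n > 2$ outputs $V_0^{j}$, $a_1' V_0^{j}$, or $V_0^{j} a_n'$, and $\mu_2$ is the multiplication of $\A_0$, which involves no $V_i$; in particular no operation ever produces a monomial containing $V_1, \ldots, V_N$. So the $\F_2[V_0, V_{N+1}]$-span of the generators $I_i, U_i, s_i$ together with their products with $V_0, V_{N+1}$ is closed under all $\mu_n$, and $\A$ is a well-defined $\A_{\infty}$-algebra over $R = \F_2[V_0, V_{N+1}]$ whose relations are those extracted from $\A^{w}$.

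The main obstacle is the bookkeeping in the second step above: one must confirm that the correspondence between ``weight-$0$ weighted operations of $\A^{w}$'' and the operations listed in Lemma~\ref{alg4} is exact, so that no weight-$0$ term of the weighted relation is silently absorbing a contribution from a positive-weight operation. Since the weight-$0$ operations of~\cite{Kh2024} are precisely those counted by the allowable \emph{unweighted} planar graphs, and Lemma~\ref{alg4} is adapted directly from Theorem~4.1 of~\cite{Kh2024}, this matches, but tracing the combinatorial dictionary of~\cite{Kh2024} to verify it is the real work. As a fully self-contained alternative, one could instead verify~\eqref{def1} directly: fix $(a_1, \ldots, a_n)$ with $n = j(2N-2)+2$, enumerate the terms $\mu_{n-r+1}(\ldots, \mu_r(a_k, \ldots, a_{k+r-1}), \ldots)$, use condition~(iii) of Lemma~\ref{alg4} (a sequence is centered, left-, or right-extended, but not both) to see that each surviving inner application forces the outer sequence into exactly one of these three types, and then match the terms in cancelling pairs according to which consecutive sub-block is centered, using~\eqref{alg5} to identify outputs; this reproduces the argument behind Theorem~4.3 of~\cite{Kh2024} but duplicates a long case analysis, so deducing it from that theorem is preferable.
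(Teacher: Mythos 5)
Your proposal is correct and follows essentially the same route the paper takes: the paper simply cites Theorem~4.3 of~\cite{Kh2024} and notes earlier (Section~\ref{algs}) that the $\A_{\infty}$-relations for $\A$ and $\B$ ``can be adapted from the ones given in~\cite{Kh2024}'' after discarding weighted operations and shrinking the ground ring, which is precisely the reduction you spell out. Your extra care about the weight-$0$ summand being closed (since weights are nonnegative and $\mu_0$ is assumed to vanish, cf.\ Remark~\ref{def2}) and about the operations landing in the $\F_2[V_0, V_{N+1}]$-span (since the output formula involves only $V_0$, cf.\ Remark~\ref{uni81}) fills in the bookkeeping the paper leaves implicit.
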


		Finally, note that all non-zero operations on $\A$ preserve the Alexander grading, and transform the Maslov grading according to
		\begin{equation}\label{alg7}
			m(\mu_n(a_1, \ldots a_n)) = \sum_{i = 1}^n m(a_i) + n - 2.
		\end{equation}

		\subsection{The $\A_{\infty}$-algebra $\B$}\label{betaAlg}

		As in the case of $\A$, we first define an associative $R$-algebra $\B_0$, and then define $\B$ as a deformation of $\B_0$. Define $\B_0$ to be an associative algebra with generators
		\begin{itemize}
			\item $I_j$, $j = 1, \ldots, N$;
			
			\item $\rho_j$, $j = 1, \ldots, N$;

			\item $\sigma_j$, $j = 1, \ldots, N$;
		\end{itemize}
		$\B_0$ is also equipped with Maslov and Alexander gradings $m: \B_0 \to \Z$ and $A: \B_0 \to H_0(S^1 \smallsetminus \{p_1, \ldots, p_{2N}\}; \Z)$ (where we use the same generators $\h_1, \ldots \h_{2N}$ for $H_0(S^1 \smallsetminus \{p_1, \ldots, p_{2N}\};\Z)$ as in Section~\ref{alphaAlg}). Now define
		\begin{align*}
			m(\rho_j) = m(\sigma_j) &= -1 \text{ for each } 1 \leq j \leq N; \\ 
			A(\rho_j) &= \h_{2j - 1} \text{ for each } 1 \leq j \leq N; \\
			A(\sigma_j) &= \h_{2j} \text{ for each } 1 \leq j \leq N
		\end{align*}
		Gradings for the $V_i$ are the same as in $\A_0$. Define multiplication ($\mu_2$) on $\B_0$ as 
		\begin{align*}
			I_i I_j &= 0 \text{ unless } i = j \\
			I_i \rho_j = \rho_j I_i &= \begin{cases} \rho_i & \text{ if } i = j \\ 0 & \text{ otherwise} \end{cases} \\
			I_i \sigma_j &= \begin{cases} \sigma_i & \text{ if } i = j \\ 0 &\text{ otherwise} \end{cases} \\
			\sigma_j I_i &= \begin{cases} \sigma_i & \text{ if } j= i - 1 \\ 0 &\text{ otherwise} \end{cases} \\
			\rho_i \rho_j &= 0 \text{ for each } 1 \leq i, j \leq N; \\
			\sigma_i \sigma_j &= 0 \text{ for each } 1 \leq i \leq N; \\
			\rho_i \sigma_j &= 0 \text{ unless } i = j \\ 
			\sigma_i \rho_j &= 0 \text{ unless } i = j - 1
		\end{align*}
		The $I_i$ are again called \emph{idempotents}. As in $\A_0$, we define the \emph{initial idempotent} $I_{j_1}$ and \emph{final idempotent} $I_{j_2}$ of a non-zero element $b \in \B_0$ as the unique idempotents such that $I_{j_1} b$ and $b I_{j_2}$ are non-zero. Again, we often refer to $1 \leq j_1, j_2 \leq N$ as the initial and final idempotents of $b$, respectively.
		
		As in $\A_0$, we define
		\[
			U_0 = \sum_{i = 1}^N \sigma_{i + N} \rho_{i + N} \cdots \sigma_{i + 1} \rho_{i + 1} \sigma_i \rho_i + \sum_{i = 1}^N \rho_i \sigma_{i + N} \rho_{i + N} \cdots \sigma_{i + 1} \rho_{i + 1} \sigma_i
		\]
		Also, we define $\B_+ = \B_0 \smallsetminus \{I_i\}_{i = 1}^N$, and the augmentation map $\epsilon: \B_0 \to R$ to have $\ker \epsilon = \B_+$. We also define the \emph{length} of an element $b \in \B$, and the Alexander grading $A(b_1, \ldots b_n)$ of a sequence $b_1, \ldots, b_n \in \B$, analogously to~\eqref{alg61} and~\eqref{alg62}, respectively.

	Next, we define $\B$ as a deformation of $\B_0$. Unlike the definition of $\B$ in Section 5.2 of~\cite{Kh2024}, we only need a single family of higher multiplications, namely:
	\begin{equation}\label{alg9}
		\mu_N(\sigma_{i+N - 1}, \ldots, \sigma_{i + 1}, \sigma_i) = V_{N + 1},
	\end{equation}
	for all $1 \leq i \leq N$, with all indices counted $\emm N$. Now,
	\begin{proposition}\label{alg8}
		With operations as above, $\B$ satisfies the $\A_{\infty}$-relations and is a bona fide $\A_{\infty}$-algebra.
	\end{proposition}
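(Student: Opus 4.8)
The plan is to verify the $\A_\infty$-relations~\eqref{def1} for $\B$ directly, exploiting the fact that the only non-vanishing structure maps are $\mu_2$ (the multiplication of $\B_0$) and the single family $\mu_N$ of~\eqref{alg9}. In the relation at arity $n$, a summand $\mu_{n-r+1}(\dots,\mu_r(a_k,\dots,a_{k+r-1}),\dots)$ can be non-zero only if $r\in\{2,N\}$ and $n-r+1\in\{2,N\}$; since $N>2$, this forces $n\in\{3,\,N+1,\,2N-1\}$, so only these three arities need to be examined. At arity $n=3$ the relation is exactly associativity of $\mu_2$, which holds because $\B_0$ is an associative algebra (a routine check on the listed generators, as in~\cite{Kh2024}). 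At arity $n=2N-1$ every summand has the shape $\mu_N(\dots,\mu_N(a_k,\dots,a_{k+N-1}),\dots)$; an inner $\mu_N$ outputs a scalar multiple of $V_{N+1}$, which lies in the idempotent ring $R\cdot\langle I_1,\dots,I_N\rangle$ and is in particular not a bare generator $\sigma_m$, so by unitality the outer $\mu_N$ kills it. Hence all terms vanish and the relation holds trivially.

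The arity $n=N+1$ relation is the substantive case. It reads
\[
	\sum_{k=1}^{N}\mu_N\bigl(a_1,\dots,\mu_2(a_k,a_{k+1}),\dots,a_{N+1}\bigr)+\mu_2\bigl(\mu_N(a_1,\dots,a_N),a_{N+1}\bigr)+\mu_2\bigl(a_1,\mu_N(a_2,\dots,a_{N+1})\bigr)=0,
\]
and by multilinearity and unitality it suffices to check it for $a_1,\dots,a_{N+1}\in\B_+$ with matching idempotents. For a summand in the first sum to be non-zero one would need $\mu_2(a_k,a_{k+1})$ to be a scalar multiple of a bare generator $\sigma_m$, with every other entry a bare $\sigma$; but a non-zero product of two elements of $\B_+$ has length $\ell(a_k a_{k+1})=\ell(a_k)+\ell(a_{k+1})\geq 2$, whereas $\ell(\sigma_m)=1$, so the entire first sum vanishes on reduced inputs. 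The relation thus reduces to the cancellation of the two $\mu_2\circ\mu_N$ terms. Here one uses that $\mu_N$ is non-zero exactly on the cyclic runs appearing in~\eqref{alg9}: if $\mu_N(a_1,\dots,a_N)\neq 0$ then $(a_1,\dots,a_N)$ is such a run with a prescribed terminal idempotent, which — together with composability of the full $(N+1)$-tuple and the identification of indices mod $N$ (so that $\sigma_{i+N}=\sigma_i$) — forces $(a_2,\dots,a_{N+1})$ to be the neighbouring run, whence the second term is also non-zero and equal to the first; over $\F_2$ they cancel, and symmetrically. The hard part is precisely this last bookkeeping: matching the two boundary terms for every composable $(N+1)$-tuple, and confirming that $\mu_2(a_k,a_{k+1})$ can genuinely never be a bare generator.

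I would present the argument above as the main proof, and record one further route as a consistency check: Proposition~\ref{alg8} can also be inherited from the $\A_\infty$-relations already established for the original weighted algebra of Section 5.2 of~\cite{Kh2024}. Passing from the ground ring $\F_2[V_0,\dots,V_{N+1}]$ to $\F_2[V_0,V_{N+1}]$ is the ring map sending $V_1,\dots,V_N\mapsto 0$, and a ring map carries a solution of the $\A_\infty$-relations to a solution; moreover the weighted $\A_\infty$-relations specialise, in weight $0$, to the ordinary relations~\eqref{def1}. So it suffices to check that after these two reductions the only surviving operations are $\mu_2$ and the family of~\eqref{alg9} — a direct inspection of the operation list in~\cite{Kh2024} — after which the relations for $\B$ follow from those for the original algebra.
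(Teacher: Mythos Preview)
Your direct verification has a genuine gap at the arity-$(N+1)$ relation. You assert that if $\mu_N(a_1,\dots,a_N)\neq 0$ (so the $a_k$ form a cyclic $\sigma$-run) then composability of the full $(N+1)$-tuple forces $(a_2,\dots,a_{N+1})$ to be the neighbouring run. But composability only pins down the \emph{idempotent} of $a_{N+1}$, not the element itself. Concretely, take the tuple whose first $N$ entries form a cyclic $\sigma$-run and whose $(N+1)$-st entry is $\rho_i$ for the unique $i$ with matching idempotent. Then $\mu_2\bigl(\mu_N(a_1,\dots,a_N),a_{N+1}\bigr)=V_{N+1}\rho_i\neq 0$, while $\mu_N(a_2,\dots,a_{N+1})=0$ since $\rho_i$ is not a $\sigma$; and in the remaining sum every $\mu_2(\sigma,\sigma)$ vanishes, while the one surviving product $\sigma_{i-1}\rho_i$ has length $2$ and so is killed by the outer $\mu_N$. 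The relation therefore returns $V_{N+1}\rho_i$, not zero.

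So either the definition of $\mu_N$ in Section~\ref{betaAlg} is implicitly meant to include left- and right-extended operations (parallel to Lemma~\ref{alg4} for $\A$; the phrase ``non-extended $\mu_N$'' in the proof of Lemma~\ref{end4}(b) suggests this is the intent), in which case your argument must track those extra terms --- and then your claim that the first sum vanishes no longer holds, since $\mu_N(\dots,\sigma_{i-1}\rho_i)=V_{N+1}\rho_i$ is precisely the cancelling contribution --- or the paper's assertion that only the pure cyclic $\mu_N$'s are present is itself too strong. Your second route via~\cite{Kh2024} is the safer one here, but there too you should not assume that only~\eqref{alg9} survives the reduction: any extended $\mu_N$'s in the original algebra that involve neither weight nor $V_1,\dots,V_N$ will persist, and they are exactly what is needed to close the relation above. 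The paper's one-line ``trivial'' does not engage with this point either.
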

	
	The deformed algebra $\B$ is not exactly the same one constructed in~\cite{Kh2024} -- it has many fewer $\A_{\infty}$-operations -- and therefore the proof of Proposition~\ref{alg8} is trivial in this case.

	\section{Uniqueness of $\A$ and $\B$}\label{deformations}
	
	The goal of this section is to verify the following proposition. 
	\begin{proposition}\label{uni1} 
		\emph{(Uniqueness of deformations)}
		\begin{enumerate}[label = (\alph*)]
			\item Up to isomorphism, there is a unique $\A_{\infty}$-deformation of $\A_0$ over $R = \F_2[V_0, V_{N + 1}]$ satisfying:
			\begin{itemize}
				\item The deformation is bigraded by $\ell$ and $m$, and for each generating element $U_i, s_i \in \A_0$, the gradings agree with the definitions given in Section~\ref{alphaAlg};

				\item The deformation has no non-zero operations $\mu_n$ for $2 < n < 2N$, and has 
				\begin{equation}\label{uni36}
					\mu_{2N}(U_1, s_1, \ldots, U_N, s_N) = V_0,
				\end{equation}
				and likewise for each cyclic permutation of the set $\{U_1, s_2,\ldots, U_N, s_N\}$, and no other non-zero $\mu_{2N}$s. 
			\end{itemize}
			
			\item Up to isomorphism, there is a unique $\A_{\infty}$-deformation of $\B_0$ over $R$ satifying: 
			\begin{itemize}
				\item The deformation is bigraded by $m$ and $A$, and for each generating element $\rho_i, \sigma_i \in \B_0$, the gradings agree with the definitions given in Section~\ref{betaAlg};

				\item The deformation has no non-zero operations $\mu_n$ for $2 < n < N$, and has
				\begin{equation}\label{uni37}
					\mu_N(\sigma_N, \sigma_{N -1}, \ldots, \sigma_1)  = V_{N + 1}
				\end{equation}
				and likewise for each cyclic permutation of the set $\{\sigma_N, \ldots, \sigma_1\}$, and no other non-zero $\mu_N$s.
			\end{itemize}
		\end{enumerate}
	\end{proposition}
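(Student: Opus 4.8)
The plan is to deduce uniqueness from the Hochschild-cohomology machinery of Section~\ref{hoch}, specifically Corollary~\ref{uni27}, applied to the associative algebras $\A_0$ and $\B_0$. The strategy has two halves which mirror each other, so I would carry out the argument for $\A_0$ in detail and then indicate that $\B_0$ is entirely analogous (indeed, by Lemma~\ref{uni2} the two computations are dual to one another). First, for $\A_0$ I would set up the relevant $A_n$-deformation: by hypothesis the deformation has vanishing $\mu_n$ for $2 < n < 2N$, so it is literally the trivial $A_{2N-1}$-deformation of $\A_0$, and the prescribed $\mu_{2N}$ given by~\eqref{uni36} (and its cyclic permutations) is the first possibly-nonzero higher operation. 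The content of the proposition is then (a) that such a $\mu_{2N}$ exists satisfying the $\A_{\infty}$-relations and the unitality condition~\ref{uni24} and the Maslov-grading condition~\eqref{uni11}; this is guaranteed by Proposition~\ref{alg5} and the explicit formulas of Lemma~\ref{alg4}, so it suffices to cite those; and (b) that the extension to a full $\A_{\infty}$-structure, with all operations satisfying~\ref{uni24} and~\eqref{uni11}, is unique up to isomorphism. Part (b) is what the Hochschild computation supplies.

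The key steps, in order, are: (1) invoke Corollary~\ref{uni27}(uniqueness): it suffices to show $H^{r,-1}(C^*) = 0$ for all $r \geq 2N$ (for $\A_0$) and for all $r \geq N$ (for $\B_0$), where $C^{*,*}$ is the Maslov-graded, unital sub-cochain-complex of the Hochschild complex. (2) Use Proposition~\ref{uni29} to identify $C^*[V_0, V_{N+1}]$ with $\A_0[V_0,V_{N+1}] \otimes \tCob(\A_0)$ as a chain complex, with the twisted differential~\eqref{uni31}; similarly for $\B_0$. (3) Use Lemma~\ref{uni2} — that $\tCob(\A_0)$ is quasi-isomorphic to $\B_0$ and $\tCob(\B_0)$ to $\A_0$ — to replace $\tCob(\A_0)$ by $\B_0$ (up to quasi-isomorphism, which is all that is needed for the cohomology). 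So the computation of $H^{*,*}(C^*(\A_0))$ reduces to understanding the cohomology of $\A_0[V_0,V_{N+1}] \otimes \B_0$ with the twisted differential, graded by the Maslov grading, where by~\eqref{uni32} the Maslov grading on the $\B_0$-tensor-factor is $m(b^*) = -m(b) - 1$. (4) Carry out this bigraded computation explicitly: track how the Maslov grading of a generator $V_0^a V_{N+1}^b \otimes (\text{word in the }\rho_i,\sigma_i)$ is forced, using $m(V_0) = 2N-2$, $m(V_{N+1}) = 2$, $m(s_i) = m(U_i) = 0$ on the $\A_0$-side and $m(\rho_i) = m(\sigma_i) = -1$ (hence $m(\rho_i^*) = m(\sigma_i^*) = 0$) on the cobar side, and check that in homological degrees $r \geq 2N$ there are no classes in Maslov degree $-1$ surviving the differential. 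The mirror statement — $H^{r,-1}(C^*(\B_0)) = 0$ for $r \geq N$ — is handled symmetrically, using $\tCob(\B_0) \simeq \A_0$, and here the cobar side carries the $s_i^*, U_i^*$ in Maslov degree $-1$, which one checks kills the relevant classes.

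The main obstacle I expect is step (4): actually pinning down the bigraded cohomology $H^{*,*}$ of the twisted tensor-product complex $\A_0[V_0,V_{N+1}]\otimes\tCob(\A_0)$ and verifying the vanishing in the precise range of bidegrees $(r,-1)$ with $r \geq 2N$. The twisted differential~\eqref{uni31} mixes the internal cobar differential $\delta^{\tCob}$ with the length-one multiplication terms $\rho a \otimes (\xi \otimes \rho^*)$ and $a\rho \otimes (\rho^* \otimes \xi)$, so the complex is not simply a tensor product of two complexes and one cannot just apply Künneth; I would set up a spectral sequence (filtering by, say, the polynomial degree in $V_0,V_{N+1}$, so that the associated graded differential is the untwisted $\id \otimes \delta^{\tCob}$) and use the known cohomology of $\tCob(\A_0)$ — which by Lemma~\ref{uni2} is the cohomology of $\B_0$, concentrated appropriately — as the $E_1$ page, then analyze the single nontrivial higher differential. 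The bookkeeping of idempotents (restricting to $C^*$ rather than all of $\HC^*$) and of the two gradings simultaneously is where care is needed, but there is no conceptual difficulty beyond organizing the cyclic-word combinatorics of $\A_0$ and $\B_0$; the Maslov-degree constraint is quite rigid and should cut the computation down substantially. Once the two vanishing statements are in hand, Corollary~\ref{uni27} delivers both existence (the deformations $\A$, $\B$ of Section~\ref{algs} realize the structure) and uniqueness up to $\A_{\infty}$-isomorphism, which is exactly the content of Proposition~\ref{uni1}(a) and~(b).
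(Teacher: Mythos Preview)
Your plan is essentially the paper's own argument: the paper proves Proposition~\ref{uni1} as a direct application of Corollary~\ref{uni27}, with the required vanishing of $H^{r,-1}(C^*)$ (and the computation of $H^{r,-2}$) supplied by Proposition~\ref{uni5}, which in turn is proved exactly via your steps (2)--(4), i.e.\ Proposition~\ref{uni29} together with Lemma~\ref{uni2}. The one place you overestimate the difficulty is step~(4): no spectral sequence is needed, since after identifying $C^*(\A_0)$ with $\A_0[V_0,V_{N+1}]\otimes\B_0$ a straightforward Maslov-grading count (writing $\rho = \rho_0 V_0^j$ and computing $m(\rho_0 V_0^j \otimes U_0^j\tau_0) = -2j - \ell(\rho_0)$) shows there are simply no generators in Maslov degree $-1$ once the length exceeds $2$, and the sole degree~$-2$ class in length $>2$ is $V_0\otimes U_0$; the $\B_0$ case is symmetric.
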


	\subsection{Computing the cobar algebras of $\A_0$ and $\B_0$}\label{HochComp}
	
	In order to prove Proposition~\ref{uni1}, we are going to compute the cohomology of the subcomplex $C^*$ of the Hochschild cochain complex defined in the second half of Section~\ref{hoch}. First, note that $\A_0$ and $\B_0$ are both associative algebras over the ground ring $R = \F_2[V_0, V_{N + 1}]$, equipped with a Maslov grading and each with a set of idempotent elements. Therefore all of the results from Section~\ref{hoch} apply. By Proposition~\ref{uni29}, the first step is to compute $\tCob(\A_0)$ and $\tCob(\B_0)$.

	\begin{lemma}\label{uni2}
		Given $\A_0$ and $\B_0$ as in Sections~\ref{alphaAlg} and~\ref{betaAlg}, we have
		\begin{equation}\label{uni3}
			\tCob(\A_0) \simeq \B_0,
		\end{equation}
		and
		\begin{equation}\label{uni4}
			\tCob(\B_0) \simeq \A_0.
		\end{equation}
	\end{lemma}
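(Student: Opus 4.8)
The plan is to prove the two quasi-isomorphisms in~\eqref{uni3} and~\eqref{uni4} by direct computation, exploiting the fact that both $\A_0$ and $\B_0$ are finite-dimensional over $\F_2$ in each fixed Alexander grading (though infinite-dimensional in total, via the $V_i$), so that $\tCob$ is concretely expressible in terms of the dual basis. Since the definitions of $\A_0$ and $\B_0$ are symmetric in structure (one has generators $U_i, s_i$, the other $\rho_i, \sigma_i$, with closely parallel multiplication tables), I would prove~\eqref{uni3} in detail and then indicate that~\eqref{uni4} follows by an entirely analogous computation with the roles of the generators swapped.

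First I would set up notation: writing $(\A_0)_+ = \A_0 \smallsetminus \{I_i\}$, the complex $\tBar(\A_0)$ consists of composable strings $a_1 \otimes \cdots \otimes a_n$ of non-idempotent elements, and $\tCob(\A_0) = \Hom_{\A_0 \otimes \A_0^{\mathrm{op}}}(\tBar(\A_0), \A_0)$ inherits the associative multiplication $\star$ from Section~\ref{CobDef} as well as the Maslov grading via~\eqref{uni32}. The key structural observation is that, up to products of idempotents, every indecomposable element of $(\A_0)_+$ is either $U_i$, or a basic Reeb-type chord $s_i$, and longer elements $s_{ij}$ factor as products $s_i s_{i+1} \cdots s_{j-1}$; dually, $\mu_2^*$ on the generators of $\A_0^*$ splits each generator into the composable pairs whose product it is. I would then exhibit an explicit map $\Phi: \B_0 \to \tCob(\A_0)$ on generators, sending $\rho_i \mapsto U_i^*$ (the functional dual to $U_i$, extended by zero on all other basis strings of length one), $\sigma_i \mapsto s_i^*$, and $I_i \mapsto I_i^*$, extended multiplicatively using $\star$, and check: (a) $\Phi$ respects the Maslov gradings — here $m(\rho_i) = m(\sigma_i) = -1$ matches $m(a^*) = -m(a) - 1 = -1$ since $m(U_i) = m(s_i) = 0$; (b) $\Phi$ intertwines $\mu_2$ on $\B_0$ with the $\star$-product on $\tCob(\A_0)$, verifying the relations $\rho_i \sigma_j = 0$ unless $i = j$, $\sigma_i \rho_j = 0$ unless $i = j - 1$, $\rho_i \rho_j = 0$, $\sigma_i \sigma_j = 0$, and the idempotent relations, against the combinatorics of composable strings in $\A_0$; and (c) $\Phi$ intertwines $\del^{\tCob}$ on $\tCob(\A_0)$ with the (zero) differential on $\B_0$ regarded as a complex, i.e. that $\del^{\tCob}$ vanishes on the image of $\Phi$ and that $H^*(\tCob(\A_0))$ is exactly the span of the images — this is where the computation has content, as one must show that the only cocycles, modulo coboundaries, are the duals of the "short" indecomposables and their products corresponding to the relations defining $\B_0$, and that the $V_0$-direction contributes nothing extra in $\tCob$ because $V_0$ has length $2N > 0$ and hence $V_0 \notin (\A_0)_+$ has no dual of length one; the functional $V_0^*$ lives in length $2N$, and $\del^{\tCob}(V_0^*)$ must be checked to be a coboundary, consistent with the $\A_{\infty}$-operation $\mu_{2N}(U_1, s_1, \ldots, U_N, s_N) = V_0$ that will appear later.

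The main obstacle I expect is part (c): precisely identifying the homology of $\tCob(\A_0)$ rather than just producing a map into it. Concretely, I need to show the complex $\tCob(\A_0)$ — with its differential $\del^{\tCob}$ that splits each $a^*$ into $\sum \mu_2^*(a_i^*)$ — has cohomology concentrated exactly in the "reduced" strings, i.e. those dual to indecomposables in the bar-complex sense, and that this cohomology ring is isomorphic as a graded associative algebra to $\B_0$. I anticipate doing this by a filtration/spectral-sequence argument on string length, or equivalently by a direct contraction: exhibiting an explicit homotopy on $\tCob(\A_0)$ that retracts onto the sub-algebra generated by $\{U_i^*, s_i^*, I_i^*\}$, using the fact that any $a^* $ with $a$ decomposable as $a = a' a''$ with both factors in $(\A_0)_+$ is hit by the differential. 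Because $\A_0$ is, in each Alexander grading, the path algebra of a finite quiver modulo monomial relations (a cyclic $A_N$-type quiver with the $U_i$ loops and $s_i$ arrows, with relations $U_i U_j = 0$ for $i \neq j$, $U_i s_j = 0$, $s_i s_j = 0$ unless $j = i+1$), its bar/cobar homology is governed by standard Koszulity-type computations for such monomial algebras, and the answer comes out to the "Koszul dual" quiver algebra, which one reads off to be exactly $\B_0$. Carrying out this retraction carefully — including bookkeeping of the $V_0$ powers and confirming $\del^{\tCob}$ acts as stated, and then repeating with $\A_0$ and $\B_0$ interchanged to get~\eqref{uni4} — is the bulk of the work, but each individual check is a finite combinatorial verification.
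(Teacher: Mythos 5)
Your approach matches the paper's in essentials: construct explicit maps in both directions between $\B_0$ and $\tCob(\A_0)$ (your $\Phi$ is the paper's $\psi$, and the paper also writes down the reverse map $\varphi: \tCob(\A_0) \to \B_0$ with $\varphi(U_i^*) = \rho_i$, $\varphi(s_i^*) = \sigma_i$, $\varphi(a^*) = 0$ for $\ell(a) > 1$), verify $\varphi \circ \psi = \id$, and then produce an explicit chain homotopy $H$ with $\delta^{\tCob} H + H\delta^{\tCob} = \id + \psi\varphi$. Your plan of ``a filtration/spectral-sequence argument on string length, or equivalently by a direct contraction'' is exactly this; the paper's $H$ works by writing each element of $\tCob(\A_0)$ uniquely as a maximal composable prefix $a_1^*\otimes\cdots\otimes a_n^*$ of length-one duals followed by an obstructing tail $\tilde a_1^*\otimes\cdots\otimes\tilde a_m^*$, and letting $H$ merge $a_n^*$ with $\tilde a_1^*$, then checking the homotopy identity in three cases. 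So you have the right skeleton; the content you flag as ``where the computation has content'' is indeed the bulk of the paper's proof.

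One point worth correcting: you worry about ``the functional $V_0^*$ \ldots and $\delta^{\tCob}(V_0^*)$ must be checked to be a coboundary.'' In the paper's setup this concern does not arise, because the cobar computation is carried out over $\F_2$ only (Remark~\ref{uni19}); the polynomial variables $V_0, V_{N+1}$ are reintroduced afterwards via the isomorphism $A[V_0,\ldots,V_{N+1}]\otimes\tCob(A) \cong C^*[V_0,\ldots,V_{N+1}]$ of Proposition~\ref{uni29}, with its twisted differential. So $V_0$ is not treated as an element of $(\A_0)_+$ when forming $\tCob(\A_0)$, and no functional $V_0^*$ enters the computation. If you kept $V_0$ inside $\A_0$ you would have to track arbitrary powers $V_0^k$ and their duals, making the finite-dimensional bookkeeping you rely on (finite quiver, monomial relations) substantially messier. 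Adopting the paper's $\F_2$-first, then tensor-up convention is the cleaner route and is what the later Hochschild calculation in Proposition~\ref{uni5} actually uses.
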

	
	\begin{proof}
		Start with the case of $\A_0$, that is,~\eqref{uni3}. We are going to manually define a Maslov-graded quasi-isomorphism
		\[
			\varphi: \tCob(\A_0) \to \B_0
		\]
		Recall first that the Maslov grading on $\tCob(\A_0)$ is determined by~\eqref{uni32} for each element $a \in \A_0$. Since $m(a) = 0$ for each such $a$, this means that $m(a^*) = -1$ for each $a$.
		
		Now define $\varphi$ as follows:
		\begin{align*}
			\varphi(U_i^*) &= \rho_i \text{ for each } 1 \leq i \leq N; \\
			\varphi(s_i^*) &= \sigma_i \text{ for each } 1 \leq i \leq N; \\
			\varphi(a^*) &= 0 \text{ if } a \in \A \text{ with }\ell (a)  > 1;
		\end{align*}
		
		\begin{remark}\label{uni19}
			\emph{Note that although our usual ground ring is $R = \F_2[V_0, V_{N + 1}]$, we are currently working \emph{only} over $\F_2$. We will use Proposition~\ref{uni29} to deal with the case of $V_0 \in \A_0$.}
		\end{remark}
		
		We extend $\varphi$ in the obvious way, i.e. 
		\begin{equation}\label{uni20}
			\varphi(a_1^* \otimes \cdots \otimes  a_n^*) = \varphi(a_n) \cdot \varphi(a_{n - 1}) \cdots \varphi(a_1)
		\end{equation}
		(with the order reversed because multiplication in $\B_0$ is right-to-left rather than left-to-right). Now define
		\[
			\psi: \B_0 \to \tCob (\A_0)
		\]
		as 
		\begin{align*}
			\psi(\rho_i) &= U_i; \\
			\psi(\sigma_i) &= s_i;
		\end{align*}
		extended multiplicatively as 
		\begin{equation}\label{uni21}
			\psi(b_n \cdots b_1) = \psi(b_1) \otimes \cdots \otimes \psi(b_n)
		\end{equation}
		where we are using the unique factorization such that $b_i$ is a length-one element of $(\B_0)_+$ (the complement of the set of idempotents in $\B_0$). 
		
		Clearly, $\varphi \circ \psi = \id_{\B_0}$. We now need to define a chain homotopy operator $H: \tCob(\A_0) \to \tCob(\A_0)$ such that
		\begin{equation}\label{uni22} 
			\delta^{\tCob} \circ H + H \circ \delta^{\tCob} = \id_{\tCob(\A_0)} + \psi \circ \varphi.
		\end{equation}
		Note that $\psi \circ \varphi = \id_{\tCob(\A_0)}$ whenever on the complement of $\ker \varphi$. This means we just need to construct $H$ satisfying~\eqref{uni22} for $\xi \in \tCob(\A_0)$ with $\varphi(\xi) = 0$. To do this, note first that any element of $\imm \psi \subseteq  \tCob(\A_0)$ can be written uniquely in the form $a_1^* \otimes \cdots \otimes a_n^*$ for length-one elements $a_i \in \A_0$ such that $\varphi(a_n) \cdots \varphi(a_1) \in \B_0$ is non-zero. By the multiplication rules for $\B_0$, such a string will always be of the form
		\begin{enumerate}[label = (\Alph*)]
			\item $U_i^* \otimes s_i^* \otimes \cdots \otimes s_{i + n - 1}^* \otimes  U_{i + n}^*$;
			
			\item $U_i^* \otimes s_i^* \otimes \cdots  \otimes  U_{i + n}^* \otimes s_{i + n}^*$;
			
			\item $s_{i - 1}^* \otimes U_i^* \otimes \cdots \otimes s_{i + n - 1}^* \otimes  U_{i + n}^*$;
			
			\item $s_{i - 1}^* \otimes U_i^* \otimes \cdots \otimes  U_{i + n}^* \otimes s_{i + n}^*$;
		\end{enumerate}
		where all indices are counted $\emm N$. This means that each element of $\tCob(\A_0)$ can be written uniquely in the form 
		\begin{equation}\label{uni33}
			\xi = a_1^* \otimes \cdots \otimes a_n^* \otimes \tilde{a}_1^* \otimes \cdots \otimes \tilde{a}_m^*
		\end{equation}
		where each $a_1^* \otimes \cdots \otimes a_n^*$ a string of one of the forms (A)-(D), and $\tilde{a}_1^*$ is the first functional in the sequence such that either $\ell(\tilde{a}_1) > 1$ or $\ell(\tilde{a}_1) = 1$ and $ \varphi(\tilde{a}_1)\varphi(a_n) = 0$. A string $\xi$ of the form in~\eqref{uni33} has $\varphi(\xi) = 0$ if and only if $m > 0$. 
		
		Define
		\[
			H(a_1^* \otimes \cdots \otimes a_n^* \otimes \tilde{a}_1^* \otimes \cdots \otimes \tilde{a}_m^*) = \begin{cases} a_1^* \otimes \cdots \otimes a_{n - 1}^* \otimes (a_n \tilde{a}_1)^* \otimes \: \tilde{a}_2^* \otimes \cdots \otimes \tilde{a}_m^* & \text{ if } n > 0 \\0 &\text{ otherwise} \end{cases}
		\]
		Note first that for $\xi$ as in~\eqref{uni33}, $H(\xi) = 0$ whenever $\varphi \neq 0$. It just remains to verify~\eqref{uni22} for each $\xi \in \tCob(\A_0)$, written as in~\eqref{uni33}. The cases are as follows:
		\begin{enumerate}[label = {Case \arabic*:}]
			\item $m = 0$, i.e. $\xi = a_1^* \otimes \cdots \otimes a_n^* \in \imm \varphi$. Then $H(\xi) = 0$ and $\delta^{\tCob} \xi = 0$, and 
			\[
				(\psi \circ \varphi) (\xi) = \xi,
			\]
			as in~\eqref{uni22}.
			
			\item $n = 0$, or $m, n > 0$ with $a_n\tilde{a}_1 = 0$. In either of these cases, $H(\xi) = 0$, and $\varphi(\xi) = 0$. We want to show that $H \circ \delta^{\tCob} = \id_{\tCob(\A_0)}$. 
			
			If $n = 0$, i.e. $\xi = \tilde{a}_1^* \otimes \cdots \otimes \tilde{a}_m^*$ then $\ell(\tilde{a}_1) > 1$, and writing $\tilde{a}_1$ as a product $a_{11} \otimes a_{1\ell}$ of basic elements, we get that 
			\[
				\delta^{\tCob}(\xi) = \sum_{1 \leq i \leq (\ell - 1)} (a_{11}\cdots a_{1i})^* \otimes (a_{1(i + 1)} \cdots a_{\ell})^* \otimes \tilde{a}_2^* \otimes \cdots \otimes \tilde{a}_m^* + A,
			\]
			where $A$ is a sum of terms with first tensor factor $\tilde{a}_1^*$, and hence, with $H(A) = 0$. This means that
			\begin{align*}
				H \delta^{\tCob}(\xi) &= \sum_{1 \leq i \leq (\ell - 1)} H((a_{11}\cdots a_{1i})^* \otimes (a_{1(i + 1)} \cdots a_{\ell})^* \otimes \tilde{a}_2^* \otimes \cdots \otimes \tilde{a}_m^*) \\
				&= H(a_{11}^* \otimes (a_{12} \cdots a_{1\ell})^* \otimes \tilde{a}_2^* \otimes \cdots \otimes \tilde{a}_m^*) \\
				&= \xi.
			\end{align*}
			That is,~\eqref{uni22} holds.
			
			If $m, n > 0$ with $a_n \tilde{a}_1 = 0$. First note that we cannot have $\ell(\tilde{a}_1) = 0$. Because of the way we chose $\tilde{a}_1$, we would then have $\varphi(\tilde{a}_1) \varphi(a_n) = 0$; but because $\xi \in C^*$, we know the final idempotent of $a_n$ matches the initial idempotent of $\tilde{a}_1$; this means that either $\varphi(\tilde{a}_1) \varphi(a_n) = 0$ or $a_n \tilde{a}_1 = 0$, but not both. We can therefore assume in this case that $\ell(\tilde{a}_1) > 1$. Then the argument is analogous to the one for $n = 0$ above.
			
			\item $m, n > 0$ with $a_n \tilde{a}_1 \neq 0$. Then because we are working with $\xi \in C^*$, we get $\varphi(\tilde{a}_1) \varphi(a_n) = 0$, so $\varphi(\xi) = 0$, and we want to verify that
			\begin{equation}\label{uni34}
				(H \circ \delta^{\tCob} + \delta^{\tCob} \circ H)(\xi) = \xi.
			\end{equation}
			For this, just note that 
			\begin{align*}
				\delta^{\tCob} \xi &= a_1^* \otimes \cdots \otimes a_n^* \otimes \left( \sum_{1 \leq j \leq \ell - 1}  (a_{11} \cdots a_{1j})^* \otimes (a_{1(j + 1)} \otimes a_{1r})^*\right) \otimes \tilde{a}_2^* \otimes \cdots \otimes \tilde{a}_m^* \\
				&\qquad \qquad + a_1^* \otimes \cdots \otimes a_n^* \otimes \tilde{a}_1^* \otimes \delta^{\tCob}(\tilde{a}_2^* \otimes \cdots \otimes \tilde{a}_m^*)
			\end{align*}
			while
			\[
				H \xi = a_1^* \otimes \cdots \otimes a_{n - 1}^* \otimes (a_n \tilde{a}_1)^* \otimes \tilde{a}_2^* \otimes \cdots \otimes \tilde{a}_m^*,
			\]
			so that 
			\begin{align*}
				H \delta^{\tCob} \xi &= a_1^* \otimes \cdots \otimes a_{n - 1}^*  \otimes \left( \sum_{1 \leq j \leq \ell - 1}  (a_n a_{11} \cdots a_{1j})^* \otimes (a_{1(j + 1)} \otimes a_{1r})^*\right) \otimes \tilde{a}_2^* \otimes \cdots \otimes \tilde{a}_m^* \\
				&\qquad \qquad + a_1^* \otimes \cdots \otimes a_{n - 1}^* \otimes (a_n \tilde{a}_1)^* \otimes \delta^{\tCob}(\tilde{a}_2^* \otimes \cdots \otimes \tilde{a}_m^*)
			\end{align*}
			while $\delta^{\tCob} H \xi$ is $\xi$ plus the two terms from $H \delta^{\tCob} \xi$, above. This verifies~\eqref{uni34}
		\end{enumerate}

		The proof that $\tCob \B_0 \simeq \A_0$ is a straightforward modification of the proof that $\tCob \A_0 \simeq \B_0$. above.
	\end{proof}
	
	\begin{remark}\label{uni18}
		\emph{The identification of the cobar is related to Koszul duality, but in this case, we can prove it directly, without referring to the duality claim. A result analogous to Lemma~\ref{uni2} becomes more difficult when we are considering the cobars of bona fide $\A_{\infty}$-algebras }.
	\end{remark}

		Additionally, a key fact to note is that:
		\begin{lemma}\label{uni6}
			Both $\A_0$ and $\B_0$ are filtered by the length grading $\ell$, and $F_n \A_0$, $F_n \B_0$ are finite dimensional for each $n$.
		\end{lemma}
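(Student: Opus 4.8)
The plan is to check that $\ell$ is a nonnegative additive grading on each of $\A_0$ and $\B_0$ — so that it induces an exhaustive algebra filtration — and then to bound the number of monomials of each length directly from the multiplication tables. The whole argument is elementary, so I will be brief.

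First I would observe that $\ell$ is additive. It is obtained from the Alexander grading $A$ by summing coefficients, and $A$ is a grading of $\A_0$ by $H_0(S^1\smallsetminus\{p_1,\dots,p_{2N}\};\Z)$, so $A(ab)=A(a)+A(b)$ whenever $ab\neq 0$, whence $\ell(ab)=\ell(a)+\ell(b)$; the same holds for $\B_0$. Every generator ($I_i,U_i,s_i$ for $\A_0$, and $I_i,\rho_i,\sigma_i$ for $\B_0$) and each of $V_0,V_{N+1}$ has $A$-value with nonnegative coefficients, so both algebras are concentrated in nonnegative $\ell$-degrees. Defining $F_n\A_0$ to be the $R$-span of the homogeneous elements of length $\le n$, additivity gives $F_m\A_0\cdot F_n\A_0\subseteq F_{m+n}\A_0$ and $F_0\A_0\subseteq F_1\A_0\subseteq\cdots$, and since a monomial $V_0^aV_{N+1}^b w$ has finite length $2Na+Nb+\ell(w)$, every element lies in some $F_n\A_0$; so the filtration is exhaustive. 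The same works for $\B_0$.

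Next I would bound $\dim_{\F_2}F_n$. Reading off the multiplication rules for $\A_0$: idempotents only select or annihilate monomials, while $U_iU_j=0$ for $i\neq j$, $U_is_j=s_jU_i=0$, and $s_is_j=0$ unless $j\equiv i+1\pmod N$; hence every nonzero monomial in $I_i,U_i,s_i$ is an idempotent $I_i$, a power $U_i^k$ ($k\ge 1$), or a cyclic run $s_is_{i+1}\cdots s_{i+k-1}$ ($k\ge 1$, indices mod $N$), and there are at most $3N$ of these of any fixed length. Likewise, for $\B_0$ one has $\rho_i\rho_j=\sigma_i\sigma_j=0$, $\rho_i\sigma_j=0$ unless $i=j$, and $\sigma_i\rho_j=0$ unless $j\equiv i+1\pmod N$, so the nonzero monomials are the idempotents together with the alternating strings $\cdots\rho_i\sigma_i\rho_{i+1}\sigma_{i+1}\cdots$, again at most $3N$ of each length. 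Since $\A_0$ is spanned over $R=\F_2[V_0,V_{N+1}]$ by its monomials and $\ell(V_0)=2N>0$, $\ell(V_{N+1})=N>0$, the level $F_n\A_0$ is spanned over $\F_2$ by the finite set of products $V_0^aV_{N+1}^b w$ with $w$ a monomial and $2Na+Nb+\ell(w)\le n$; hence $F_n\A_0$ is finite dimensional, and the identical argument gives finiteness of $F_n\B_0$. (Equivalently, if one regards $\A_0,\B_0$ as $\F_2$-algebras and adjoins the $V_i$ afterwards as in Proposition~\ref{uni29}, then $F_n$ is simply the span of the finitely many monomials of length $\le n$.)

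I do not expect a real obstacle here: the proof is bookkeeping with the multiplication tables together with the fact that the generators and all of $V_0,V_{N+1}$ have positive length. The only point worth flagging is that the wrap-around relations $s_Ns_1\neq 0$ and $\sigma_N\rho_1\neq 0$ make the $s$- and $\rho\sigma$-runs arbitrarily long, so $\A_0$ and $\B_0$ each contain infinitely many monomials overall — but only finitely many in any fixed length, which is exactly what the finite dimensionality of $F_n$ requires.
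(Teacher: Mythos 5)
The paper simply asserts Lemma~\ref{uni6} with no proof, presenting it as an immediate consequence of the definitions; so there is no argument in the text to compare yours against, and the verification you supply is precisely the bookkeeping the paper is implicitly relying on. Your overall strategy and conclusion are sound: nonzero monomials in $\A_0$ are idempotents, powers $U_i^k$, or cyclic $s$-runs, and in $\B_0$ they are idempotents or alternating $\rho\sigma$-strings, so there are at most $O(N)$ monomials in any fixed length, and since $\ell(V_0),\ell(V_{N+1})>0$ each $F_n$ is an $\F_2$-span of finitely many elements.

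One small slip worth fixing: you justify additivity of $\ell$ by saying that $A$ is a grading, so $A(ab)=A(a)+A(b)$ whenever $ab\neq0$. With the paper's conventions this fails on idempotents, since $A(I_j)=\h_{2j-1}$ yet $I_j^2=I_j$, so $A(I_j I_j)=A(I_j)\neq 2A(I_j)$ (indeed the paper is internally inconsistent about $\ell(I_j)$, asserting both $A(I_j)=\h_{2j-1}$ and elsewhere that only $U_i,s_i$ have minimal length). This does not endanger your conclusion: for the filtration axiom you only need $F_mF_n\subseteq F_{m+n}$, and this still holds because $I_j a\in\{0,a\}$ always has $\ell(I_j a)\le \ell(I_j)+\ell(a)$ trivially, while for $a,b\in\A_+$ (or $\B_+$) multiplication is genuinely $A$-additive. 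It would be cleaner to either adopt the convention $\ell(I_j)=0$ explicitly, or phrase the additivity claim as holding on the augmentation ideal and treat idempotent multiplication separately. With that caveat noted, the proof is correct and is the natural one.
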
		 

		Having noted all this, we can apply Lemma~\ref{hoch3} to ascertain the necessary facts about the Hochschild cohomologies of $\A_0$ and $\B_0$. Write $C^{*,*}(\A_0)$ and $C^{*,*}(\B_0)$ for the subcomplexes of $\HC^{*,*}(\A_0)$ and $\HC^{*,*}(\B_0)$, respectively, which were discussed in Section~\ref{hoch}, and write $H^{*,*}(\A_0), H^{*,*}(\B_0)$ for the (bigraded) cohomology groups corresponding to these chain complexes.

	\subsection{Computing the Hochschild cohomology}\label{uniPunch}

		The last step before we can apply Corollary~\ref{uni27} to is to compute $H^{*, -1}$ an $H^{*,-2}$ of $\A_0$ and $\B_0$. In order to make this computation, we are going to combine~\eqref{uni30} from Proposition~\ref{uni30} with~\eqref{uni3} and~\eqref{uni4} from Lemma~\ref{uni2} in order to compute $H^{*,*}(\A_0)$ using the chain complex $\A_0[V_0, \ldots, V_{N + 1}] \otimes \B_0$, and compute $H^{*,*}(\B_0)$ using the chain complex $\B_0[V_0, \ldots, V_{N + 1}] \otimes \A_0$. The result is as follows. 
		\begin{proposition}\label{uni5}
			With notation as above,
			\begin{equation}\label{uni7}
				H^{n, -2}(\A_0) = \begin{cases} R & n = 2N \\ 0 & \text{ otherwise}\end{cases}
			\end{equation}
			and
			\begin{equation}\label{uni15}
				H^{n, -1}(\A_0) = 0 \text{ for all } n \geq 2.
			\end{equation}
			For $\B_0$, we have
			\begin{equation}\label{uni16}
				H^{n, -2}(\B_0) = \begin{cases} R & n = N \\ 0 & \text{ otherwise} \end{cases}
			\end{equation}
			and
			\begin{equation}\label{uni17}
				H^{n, -1}(\B_0) = 0 \text{ for all } n.
			\end{equation}
		\end{proposition}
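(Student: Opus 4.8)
The plan is to compute the relevant homology groups by replacing the Hochschild complex with the concrete model of Proposition~\ref{uni29} and then using Lemma~\ref{uni2} to make the cobar factor explicit. By Lemma~\ref{uni6} both $\A_0$ and $\B_0$ are filtered with finite-dimensional pieces, so Proposition~\ref{uni29} applies: writing $\A_0^{\flat}$ for the $\F_2$-algebra obtained from $\A_0$ by setting $V_0 = V_{N+1} = 0$, we get $C^*(\A_0) \cong \A_0[V_0, V_{N+1}] \otimes \tCob(\A_0^{\flat})$ with the differential~\eqref{uni31}. By Lemma~\ref{uni2} the internal cobar differential on $\tCob(\A_0^{\flat})$ is acyclic onto $\B_0$ with zero differential, and the maps $\varphi, \psi, H$ exhibiting this are built from the multiplication of $\A_0^{\flat}$, hence are compatible with the bimodule terms of~\eqref{uni31}; so $C^*(\A_0)$ is quasi-isomorphic to $\A_0[V_0, V_{N+1}] \otimes \B_0$ with differential
\[
	\delta(a \otimes b) = \sum_{i=1}^{N}\bigl((U_i a)\otimes(\rho_i b) + (a U_i)\otimes(b\rho_i) + (s_i a)\otimes(\sigma_i b) + (a s_i)\otimes(b\sigma_i)\bigr),
\]
using $\varphi(U_i^*) = \rho_i$, $\varphi(s_i^*) = \sigma_i$. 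The mirror argument gives that $C^*(\B_0)$ is quasi-isomorphic to $\B_0[V_0, V_{N+1}] \otimes \A_0$ with the analogous differential.

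Next I would fix the gradings. In this model the cohomological degree of $a\otimes b$ is the word length of $b$ in the generators $\rho_i,\sigma_i$ (the tensor length on the cobar side), and by~\eqref{uni32}, together with $m(U_i) = m(s_i) = 0$, $m(\rho_i) = m(\sigma_i) = -1$, $m(V_0) = 2N-2$, $m(V_{N+1}) = 2$, the Maslov degree of a monomial $V_0^{p}V_{N+1}^{q} w \otimes b$ is $p(2N-2) + 2q - \|b\|$. Fixing the Maslov degree to be $-1$ or $-2$ and using the parity of $p(2N-2)+2q$ already restricts which cohomological degrees can carry homology, but — crucially — it does not by itself force concentration (e.g. in Maslov degree $-2$ a string of length $4N$ paired with $V_0^2 V_{N+1}$ also has the right degree), so the homology computation itself is needed. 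I would carry that out by decomposing the complex along the idempotent/quiver structure: $\A_0$ is essentially the path algebra of the cyclic quiver with a loop $U_i$ and an arrow $s_i\colon i \to i+1$ at each of $N$ vertices, $\B_0$ is its Koszul dual (this is the content of Lemma~\ref{uni2}), and $\A_0[V]\otimes\B_0$ splits as a direct sum of \emph{local} subcomplexes indexed by pairs of idempotents, each of which is a closed-up version of the Koszul complex of the dual pairs $\{U_i,\rho_i\}$ and $\{s_i,\sigma_i\}$. A K\"unneth/Koszul acyclicity computation then shows each local piece is acyclic except for the class produced by a path that wraps once all the way around the cycle, and tracking $\ell$ and $m$ identifies this with $V_0$ in cohomological degree $2N$, Maslov degree $-2$ (represented by $V_0 \otimes b$ for $b$ the length-$2N$ cycle $\sigma_N\rho_N\cdots\sigma_1\rho_1$ and its rotations, i.e. the cobar dual of the cyclic strings $(U_1,s_1,\ldots,U_N,s_N)$). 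This yields~\eqref{uni7} and~\eqref{uni15}; the same computation on $\B_0[V]\otimes\A_0$ gives~\eqref{uni16} and~\eqref{uni17}, with surviving class $V_{N+1}\otimes(s_1s_2\cdots s_N$ and its rotations$)$ in cohomological degree $N$ and Maslov degree $-2$ — note that here the factor $V_{N+1}$ of Maslov degree $2$ against a length-$N$ string of Maslov degree $-N$ is what lands the class in Maslov degree $-2$, so the $V$-weights must be kept in the accounting.

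I expect the main obstacle to be precisely the homology of these local subcomplexes. Because the quiver is cyclic rather than linear, the relevant Koszul-type complexes are closed up and not acyclic; the single surviving class in each is what produces $V_0$ (resp. $V_{N+1}$) above, and the real work is to show that nothing else survives in \emph{any} Maslov degree — in particular that the ``wrap around $k$ times'' contributions for $k \geq 2$ are all boundaries, which morally happens because in homology $V_0$ \emph{equals} the wrap-once class (this is exactly the $\A_{\infty}$ relation $\mu_{2N}(U_1,s_1,\ldots) = V_0$ reappearing), so that wrapping twice is just $V_0^2$ and contributes nothing new. A secondary, more routine point to nail down is the compatibility claim that the quasi-isomorphism of Lemma~\ref{uni2} may be tensored into the model of Proposition~\ref{uni29} compatibly with the bimodule terms of~\eqref{uni31}; this should follow from the fact that the homotopy $H$ of Lemma~\ref{uni2} is a partial multiplication in $\A_0$, but it must be checked. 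Once Proposition~\ref{uni5} is in hand, feeding~\eqref{uni15} and~\eqref{uni17} into the uniqueness clause of Corollary~\ref{uni27}, and~\eqref{uni7},~\eqref{uni16} into its existence clause (the nonzero group being exactly where $\mu_{2N}$, resp.\ $\mu_N$, must be placed), gives Proposition~\ref{uni1}.
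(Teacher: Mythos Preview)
Your setup is exactly the paper's: pass to the model $\A_0[V_0,V_{N+1}]\otimes\B_0$ via Proposition~\ref{uni29} and Lemma~\ref{uni2}, and symmetrically for $\B_0$. Where you diverge is in how much work the remaining computation requires, and this is because you are missing one constraint that the paper exploits.

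The paper does not carry out any Koszul-type homology computation. Instead it observes that, once one restricts to the subcomplex of Hochschild cochains that also respect the \emph{length} (equivalently Alexander) grading --- which is forced, since Proposition~\ref{uni1} concerns deformations bigraded by $\ell$ and $m$ --- the relevant chain groups are already concentrated. Two remarks make this work. First, $V_{N+1}$ annihilates $\A_0$ (the paper records this at the start of its proof), so your proposed counterexample $V_0^2 V_{N+1}\otimes(\text{length-}4N\text{ string})$ is simply zero; on the $\A_0$ side one may write $a = a_0 V_0^{j}$ with no $V_{N+1}$. Second, the length constraint forces $\ell(a)=\ell(b)$ in the model, so $n=\ell(b)=\ell(a)=2jN+\ell(a_0)$ and hence
\[
m(a\otimes b)\;=\;j(2N-2)-n\;=\;-2j-\ell(a_0).
\]
Thus $m=-1$ forces $(j,\ell(a_0))=(0,1)$, giving $n=1$; and $m=-2$ with $n>2$ forces $(j,\ell(a_0))=(1,0)$, giving $n=2N$. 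All that remains is to check that the single class $V_0\otimes U_0$ is a cocycle and not a coboundary, which the paper does in two lines. The $\B_0$ case is symmetric, with $V_{N+1}\otimes U_{N+1}$ in degree $N$. Your ``wrap around $k$ times'' worry dissolves: those classes sit in Maslov degree $-2k$, not $-2$.

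So your K\"unneth/Koszul decomposition would presumably succeed, but it is attacking a problem the extra grading has already solved for free. The issue in your plan is not an error but a missed simplification: you set out to compute the Maslov-graded cohomology when you only need its length-preserving part, and in that part the answer is visible before taking homology.
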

		
		\begin{proof}
			We compute $H^{n,j}(\A_0)$ first. Recall that $n$ is the length-grading, and $j$ is the Maslov grading. Also note that we are now working over the full ground ring $R = \F_2[V_0, V_{N + 1}]$, with the understanding that for $\rho \in \A_0$, $1 \leq i \leq N + 1$, $V_i \rho = \rho V_i = 0$. For each $\rho \in \A_0$, we can write $\rho$ uniquely in the form 
			\begin{equation}\label{uni9}
				\rho = \rho_0 \cdot V_0^j
			\end{equation}
			where $\rho_0$ contains no factors of $V_0$. Now, for $\rho \in \A_0$,
			\begin{equation}\label{uni8}
				m(\rho) = \begin{cases} j(2N - 2) & \text{ if } j > 0 \text{ in the representation from~\eqref{uni9};} \\ 0 & \text{ otherwise.} \end{cases} 
			\end{equation}
			
			Now, for $\rho = \rho_0 V_0^j$, any non-zero element of $\rho \otimes \tau \in \A_0[V_0, \ldots, V_{N + 1}] \otimes \B$ will be of the form
			\[
				\rho_0 V_0^j \otimes U_0^j \tau_0,
			\]
			where $\tau_0 \in \B_0$ is the element dual to $\rho_0 \in \A_0$ according to the procedure from the proof of Lemma~\ref{uni2}. This means that 
			\begin{align*}
				m(\rho_0 V_0^j \otimes U_0^j\tau_0) &= 2jN + m(U_0^j) + m(\tau_0) \\
				&= j(2N - 2) - 2jN - \ell (\tau_0) \\
				&= - 2j - \ell (\tau_0) \\
				&= -2j - \ell(\rho_0).
			\end{align*}
			Note that we have just shown that there are no allowable $\rho \otimes \tau \in \A_0[V_0,\ldots, V_{N + 1}]$ with $m(\rho \otimes \tau) = -1$ and length $> 2$. This verifies~\eqref{uni15}. 
			
			This calculation also means that $m(\rho) = -2$ if and only if $j = 1$ and $\rho_0$ is trivial, i.e. $\rho = V_0$, or $j = 0$ and $\ell(\rho_0) = 2$. But we are only interested in $\HH_{\Z}^{n,*}(\A_0)$ for $n > 2$, so this latter case is not relevant here. In order to verify~\eqref{uni7}, we just need to show that 
			\begin{enumerate}[label = (\roman*)]
				\item $\del (V_0 \otimes U_0) = 0$;
				
				\item $V_0 \otimes U_0$ is not a coboundary. 
			\end{enumerate}
			For (i), recall that
			\[
				U_0 = \sum_{i = 1}^N \sigma_{i + N - 1} \rho_{i + N - 1} \cdots \sigma_i \rho_i + \sum_{i = 1}^N \rho_i \sigma_{i + N - 1} \cdots \rho_{i + 1} \sigma_i
			\]
			so using multiplication rules on $\B_0$, we get
			\begin{align*}
				\del (V_0 \otimes U_0) &= \sum_{i = 1}^N ( U_i V_0 \otimes \rho_i \sigma_{i + N - 1} \cdots \rho_{i + 1} \sigma_i \cdot \rho_i + V_0 U_i \otimes \rho_i \cdot \sigma_{i + N - 1} \rho_{i + N - 1} \cdots \sigma_i \rho_i) \\
				&\qquad + \sum_{i = 1}^N (s_{i - 1} V_0 \otimes \sigma_{i + N - 1} \rho_{i + N - 1} \cdots \sigma_i \rho_i \sigma_{i - 1} + V_0 s_i \otimes \sigma_i \cdot \rho_i \sigma_{i + N - 1} \cdots \rho_{i + 1} \sigma_i) \\
				&= 0
			\end{align*}
			For (ii), note that any $\rho \otimes \tau \in \A_0[V_0, \ldots, V_{N + 1}] \otimes \B_0$ with $\del (\rho \otimes \tau) = V_0 \otimes U_0$ would have $m(\rho \otimes \tau) = - 3$ and $\ell(\rho) = \ell(\tau) = 2N - 1$. But there are no such elements. 
			
			For $H^*(\B_0)$, we  note that for each $\rho \in \B_0$, we can write $\rho$ uniquely in the form
			\[
				\rho  = \rho_0 V_1^{j_1} \cdots V_{N + 1}^{N + 1}.			\]
			where each $j_i$ is a nonnegative integer and $\rho_0$ contains no factors of any $V_i$. If $j_i > 0$ for $1 \leq i \leq N$, then there is not $\tau \in \A$ such that $\rho \otimes \tau$ is a non-zero element of $\B_0[V_0, \ldots, V_{N + 1}] \otimes \A_0$. Hence, we restrict ourselves to the case of $\rho = \rho_0 V_{N + 1}^j$, for $j$ a non-negative integer and $\rho_0$ not containing any factors of $V_i$. Then if $\tau \in \A_0$ is such that $\rho \otimes \tau$ is a nonzero element of $\B_0[V_0, \ldots, V_{N + 1}] \otimes \A_0$, then we have
			\begin{equation}\label{uni35}
				m(\rho \otimes \tau) = - 2 j  - \ell(\rho_0),
			\end{equation}
			Note that $\ell(\rho) = jN + \ell(\rho_0)$. Hence, for $\ell(\rho)  > 2$, we never have never have $m(\rho \otimes \tau) = -1$. This verifies~\eqref{uni7} Also,~\eqref{uni35} means that for $\ell(\rho) > 2$, $m(\rho \otimes \tau) = - 2$ implies that $\ell(\rho_0) = 0$ (i.e. $\rho_0$ is trivial) and $j = 1$ ? that is, $\rho = V_{N + 1}$. Now, the only nonzero element of $\B_0[V_0, \ldots, V_{N + 1}] \otimes \A_0$ with $\B_0[V_0, \ldots, V_{N + 1}]$ component $V_{N + 1}$ is $V_{N + 1} \otimes U_{N + 1}$, which has $\ell(V_{N + 1} \otimes U_{N + 1}) = N$. This verifies~\eqref{uni6}.
		\end{proof}

		We are now ready to prove Proposition~\ref{uni1}, that is, that $\A$ and $\B$ are the unique deformations of $\A_0$ and $\B_0$ with the given $\mu_{2N}$ and $\mu_N$ operations, respectively.  

		\begin{proof}[Proof of Proposition~\ref{uni1}]
			The proof is a direct application of Corollary~\ref{uni27} combined with the calculations from Proposition~\ref{uni5}. 

			The only slightly touchy thing is that the result of Proposition~\ref{uni5} provides a single Hochschild cochain corresponding to the $\mu_{2N}$ for $\A$, and a single Hochschild cochain corresponding to the $\mu_N$ for $\B$. This would seem to imply that there is a \emph{single} non-zero $\mu_{2N}$ operation on $\A$ and a single $\mu_N$ operation on $\B$, rather than the $2N$ non-zero $\mu_{2N}$s for $\A$ stipulated by~\eqref{uni36} and the $N$ non-zero $\mu_N$s stipulated by~\eqref{uni37}. However, each of the operations from~\eqref{uni36} is in fact different components (in separate Alexander gradings) of the single Hochschild cocycle corresponding to the $\mu_{2N}$; indeed, looking back at the explicit computation of the Hochschild cohomology from the proof of Proposition~\ref{uni5}, it is clear that the cocycle corresponding to the $\mu_{2N}$ has one component in each Alexander grading, just like the operations stipulated by~\eqref{uni36}. The case of the $\mu_N$ for $\B$ is analogous.
		\end{proof}

	\section{Proof of Theorem~\ref{fukaya}}\label{proof}

		The goal of this section is to apply Proposition~\ref{uni1} in order to verify Theorem~\ref{fukaya}. For both $\A$ and $\B$, the proof has the same three steps. Using $\A$ as an example, the first step of the proof is to identify the generators of $\A_+$ (the augmentation ideal in the underlying associative algebra $\A_0$) with the elements of $\End_{\W(M)}(\alpha_1 \oplus \cdots \oplus \alpha_N)$. The second step is to show that for each $x \in \End_{\W(M)}(\alpha_1 \oplus \cdots \oplus \alpha_N)$ corresopnding to an element of $a \in \A_+$, the $\Z$-component of $\gr(x)$ equals $m(a)$. Finally, we do a series of model calculations which show that $\End_{\W_{\alpha}}(\alpha_1 \oplus \cdots \oplus \alpha_N)$ satisfies the conditions of Proposition~\ref{uni1}(a); that is, that
		\begin{itemize}
			\item Multiplication preserves the $\Z$-component of $\gr$;

			\item There are no non-zero multiplication operations of order $2 < n < 2N$, and there are  $\mu_{2N}$ in each Alexander grading, corresponding to the cyclic permutations of~\eqref{uni36}. 
		\end{itemize}
		
		\begin{wrapfigure}{r}{3cm} \vspace{-20pt}
			\includegraphics[width = 3cm]{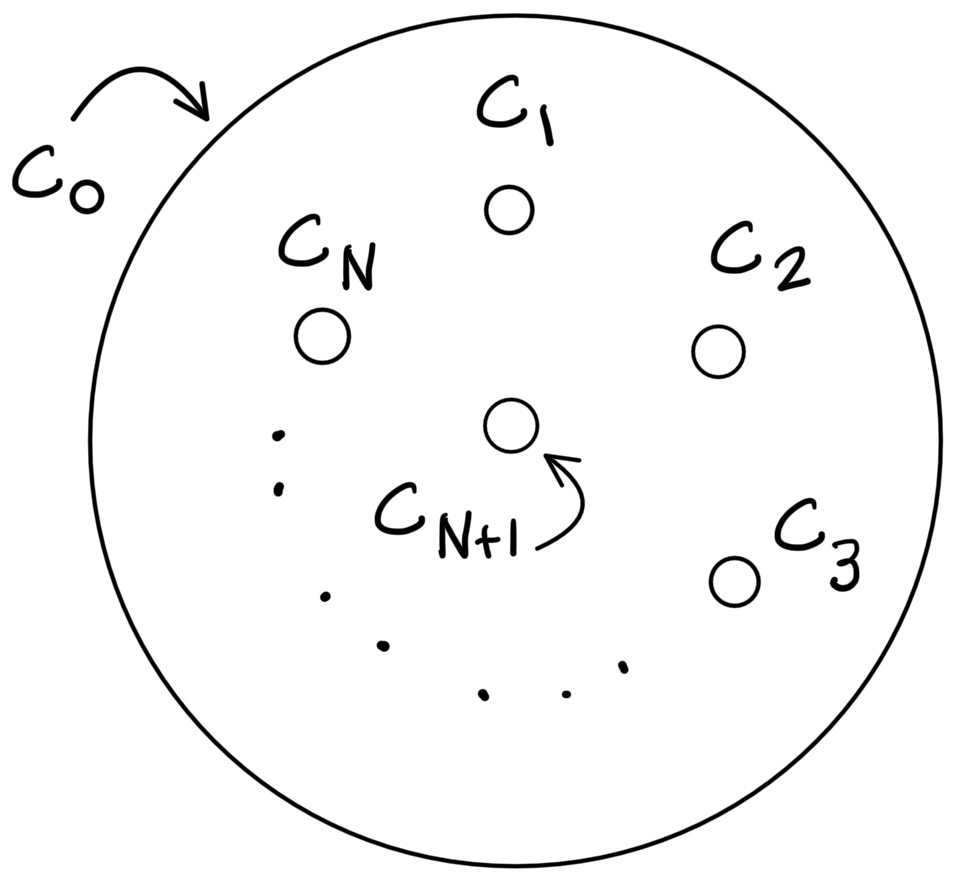}
			
			\caption{}\label{proof5} \vspace{-20pt}
		\end{wrapfigure}
		
		Let $C_0, \ldots, C_{N + 1}$ denote the boundary circles of the star diagram, labelled as in Figure~\ref{proof5}. Write
		\begin{align*}
			\tilde{\A} &= \End_{\W(M)}(\alpha_1 \oplus \cdots \oplus \alpha_N) \\
			\tilde{\B} &= \End_{\W(M)}(\beta_1 \oplus \cdots \oplus \beta_N)
		\end{align*}
		Let $\tilde{m}$ denote the $\Z$-valued gradings induced on both $\tilde{A}$ and $\tilde{\B}$ as the projection of $\gr$ to the $\Z$-coordinate. The $\alpha$- and $\beta$-arcs are defined as in Figure~\ref{proof80}, for arbitrary $N > 2$. (Figure~\ref{proof80} has $N = 3$.)
		
		\begin{wrapfigure}{r}{3.5cm}
			\includegraphics[width = 3.5cm]{intro1}
			
			\caption{}\label{proof80}
		\end{wrapfigure}

		To prove~\eqref{fukaya1} from Theorem~\ref{fukaya}, we first identify the $I_i, U_i$ and $s_i, (1 \leq i \leq N)$, with elements of  $\tilde{\A}$. For each $1 \leq i \leq N$, define $\tilde{\alpha}_i$ to be a slight perturbation of $\alpha_i$ such that $\tilde{\alpha}_i \cap C_i$ and $\tilde{\alpha}_i \cap C_{N + 1}$ are slightly clockwise of $\alpha_i \cap C_i$ and $\alpha_i \cap C_{N + 1}$, respectively. Define $\alpha_i^j = \varphi^j(\tilde{\alpha}_i)$. Then 
		\begin{equation}\label{proof1}
			q_{ii}^0 := \alpha_i \cap \tilde{\alpha}_i,
		\end{equation}
		is a single point on $\alpha_i$, and
		\begin{equation}\label{proof2}
			\alpha_i \cap \alpha_i^1 = \{q_{ii}^n, r_{ii}^n\}_{n = 1}^{\infty},
		\end{equation}
		\begin{wrapfigure}{r}{5cm}\vspace{-20pt}
			\includegraphics[width = 6cm]{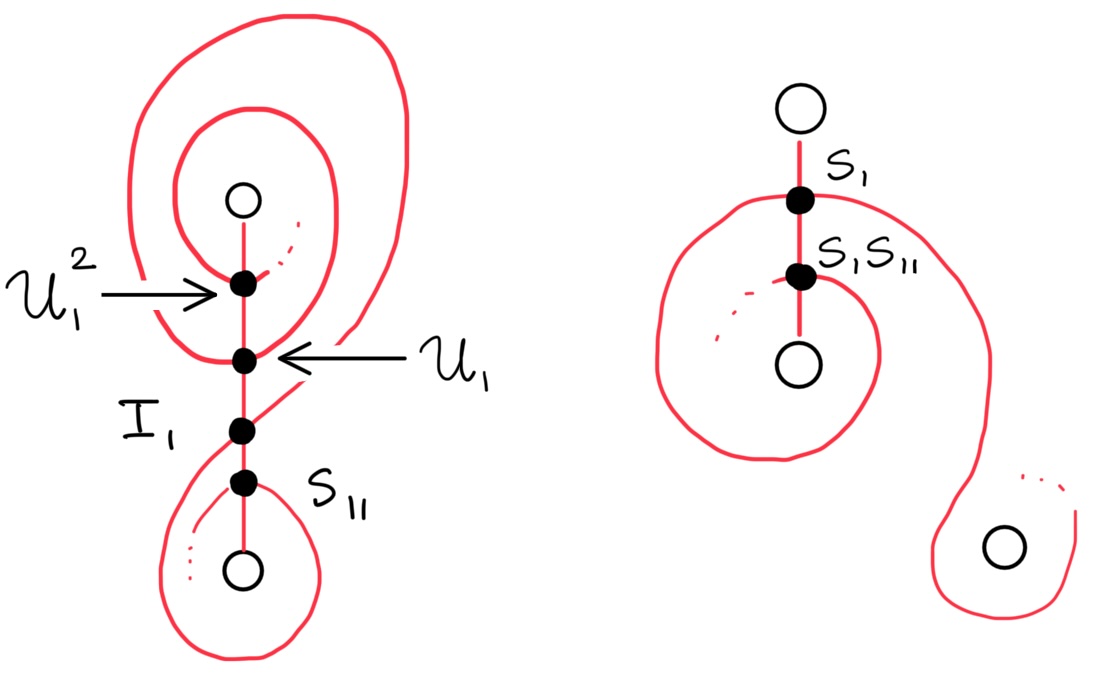}
		
		\caption{}\label{proof4}
		\end{wrapfigure}
		where the $q_{ii}^0$ are a sequence of points radiating inward from $q_{ii}^0$ along $\alpha_i$, towards $C_{N + 1}$, and $r_{ii}^n$ are a sequence of points radiating outward from $q_{ii}^0$ along $\alpha_i$ towards $C_i$. These points are as on the left of Figure~\ref{proof4}. Likewise, for $i \neq j$, 
		\begin{equation}\label{proof3}
			\alpha_i \cap \alpha_j^1 = \{q_{ij}^n\}_{n = 1}^{\infty}
		\end{equation}
		is a sequence of points radiating inward from $q_{ii}^0$ along $\alpha_i$, towards $C_{N + 1}$. These points are as on the right of Figure~\ref{proof4}.

		Note also that up to Hamiltonian isotopy, each of these can be viewed as the corresponding intersection point in $\alpha_i^r \cap \alpha_j^{r + 1}$ for any $r \in \N$. Note also that by Proposition~\ref{fukDef8}(b), the gradings of these two points are the same, so it does not matter which point we consider.

		The identification between generators of $\tilde{\A}$ and elements of $\A_0$ is as follows:
		\begin{align*}
			I_i &\leftrightarrow q_{ii}^0; \quad U_i^n \leftrightarrow r_{ii}^n \\
			s_{ii}^n &\leftrightarrow q_{ii}^n \quad s_{ij} s_{jj}^n \leftrightarrow q_{ij}^{n + 1} \text{ for } i \neq j;
		\end{align*} 
		For the rest of the section, we will abuse notation by writing the corresponding element of $\A_0$ instead of the $q_{ij}^k$ and $r_{ii}^n$. 
		
		Define a $\Z_{\geq 0}$-valued \emph{length grading} $\ell$ on $\tilde{\A}$ according to the rules:
		\begin{enumerate}[label = (\roman*)]
			\item $\ell(I_i) = \ell(U_i) = \ell(s_i) = 1$ for each $i$;
			
			\item $\ell(V_0) = 2N$
			
			\item $\ell(\mu_n(a_1, \ldots, a_n) ) = \sum_{i = 1}^n \ell(a_i)$.
		\end{enumerate}
		
		The next step is to verify that the non-zero $\mu_2$s on $\tilde{\A}$ are the same as in $\A_0$, and that the $\mu_2$ on $\tilde{\A}$ preserves $\tilde{m}$.

		\begin{lemma}\label{end1}
			\begin{enumerate}[label = (\alph*)]
				\item The identifications above define a bijection between $\tilde{\A}$ and $\A_0$; 
				
				\item Each $U_i, s_i$ has trivial $\tilde{m}$-grading;

				\item The non-zero $\mu_2$ operations on $\tilde{\A}$ are in one-to-one correspondence with the non-zero $\mu_2$ operations on $\A$.

				\item These non-trivial $\mu_2$ operations preserve $\tilde{m}$;
			\end{enumerate}
		\end{lemma}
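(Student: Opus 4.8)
The plan is to verify parts (a)--(d) in sequence, as each is essentially a finite local computation in the star diagram once the wrapping picture from Figures~\ref{proof4} and~\ref{proof80} is set up carefully.

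\textbf{Part (a): bijectivity.} First I would argue that the list of intersection points $q_{ij}^n$, $r_{ii}^n$ exhausts $\varphi^1(\tilde\alpha_i)\cap\alpha_j$ for all $i,j$. The key input is that each $\alpha$-arc is a conical (radial) arc running from a boundary circle $C_i$ out to the central circle $C_{N+1}$, so that after applying the time-one wrapping $\varphi^1$ the image arc $\alpha_i^1$ spirals once around, crossing each $\alpha_j$ either in a single sequence radiating inward (when $i\neq j$, as in~\eqref{proof3}) or in the two sequences of~\eqref{proof2} together with the perturbation point $q_{ii}^0$ of~\eqref{proof1}. Matching these against the monomials of $\A_0$ — which by the multiplication rules of Section~\ref{alphaAlg} are exactly $I_i$, the powers $U_i^n$, the ``short'' chains $s_{ii}^n$, and the longer chains $s_{ij}s_{jj}^n$ — gives the stated correspondence, and injectivity/surjectivity is then a bookkeeping check that no monomial of $\A_0$ is omitted or doubled. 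Here one uses that $\A_0$ has a normal form (every nonzero element is $V_0^k$ times one of these generators), which follows from~\eqref{alg1}--\eqref{alg3}.

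\textbf{Parts (b) and (d): the $\Z$-grading.} For (b) I would compute $\gr(U_i)$ and $\gr(s_i)$ directly from~\eqref{fukDef11}, or more efficiently from~\eqref{fukDef6}, projecting to the $\Z$-component. Since $\dim M = 2$, the $\Z$-part of $\gr(x)$ for $x\in\varphi^1(L_0)\cap L_1$ counts (with the conventions of Section~\ref{fukaya2}) the winding contributed by the wrapping path $\eta$ plus the small clockwise rotation $\gamma_x$ at the intersection point; for $U_i$ and $s_i$, which are the ``length-zero'' generators sitting right near the boundary before any full loop of wrapping has accumulated, both contributions cancel and the count is $0$. For (d), the $\mu_2$ operations in $\tilde\A$ are counts of triangles; by Proposition~\ref{fukDef8}(a), $\gr$ satisfies~\eqref{fukDef4}, so $\gr(\mu_2(a_1,a_2)) = \lambda^0\gr(a_1)\gr(a_2) = \gr(a_1)\gr(a_2)$, and projecting to $\Z$ gives $\tilde m(\mu_2(a_1,a_2)) = \tilde m(a_1)+\tilde m(a_2)$, which is exactly the $n=2$ case of the Maslov-grading formula; in particular $\mu_2$ preserves $\tilde m$ in the sense required. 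So (d) is almost formal given Proposition~\ref{fukDef8}.

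\textbf{Part (c): matching the $\mu_2$'s.} This is the substantive step. I would go generator by generator: a nonzero $\mu_2(x,y)$ in $\tilde\A$ is an immersed triangle with boundary on $\varphi^2(\alpha_i)$, $\varphi^1(\alpha_j)$, $\alpha_k$ and corners at $x\in\varphi^1\alpha_i\cap\alpha_j$ (isotoped appropriately), $y$, and the output. I would enumerate which pairs of the points $q_{ij}^n, r_{ii}^n$ cobound such a triangle in the star diagram, using that the surface is planar/conical so small triangles are rigid and readily visible (this is the content of the ``model calculation'' figures referenced in the statement). The outcome should be precisely: $I_i\cdot(-)$ and $(-)\cdot I_j$ act as idempotents; $U_i\cdot U_i = U_i^2$ and more generally concatenation of the $r_{ii}^n$; $s_i\cdot s_{i+1}$ etc.\ concatenate along the cyclic order to give $s_{ij}$; and all the products that vanish in $\A_0$ (e.g.\ $U_i s_j = 0$, $s_i s_j=0$ unless $j=i+1$, mismatched idempotents) correspond to pairs with no bounding triangle, because the relevant arcs simply do not close up a disk. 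The main obstacle is getting the orientation/immersion conventions for these triangles exactly right so that the count is nonzero precisely in the expected cases and the ``unexpected'' products genuinely vanish — i.e.\ ruling out spurious triangles (including ones wrapping further around) by a combination of the Alexander/length grading (which is additive under $\mu_2$ by rule (iii) in the definition of $\ell$ on $\tilde\A$, so $\ell(\mu_2)=2$ forces both inputs to be length-one generators and the output to be length-two) and the $\tilde m$-grading constraint from (d). Once the triangle count is pinned down, (c) follows by comparing with the explicit $\mu_2$ table of Section~\ref{alphaAlg}, and I would remark that the analogous statement for $\tilde\B$ (needed for~\eqref{fukaya2}) is proved by the same method with the $\beta$-arcs.
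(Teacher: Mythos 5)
Your proposal follows essentially the same route as the paper's proof: (a) and (b) are direct figure-based computations, (d) is the $n=2$ case of Proposition~\ref{fukDef8}(a), and (c) is verified by enumerating triangles in the star diagram. One place where the paper is slightly tighter: for (c) the paper explicitly invokes associativity of $\mu_2$ on $\tilde{\A}$ to reduce the forward direction to just the two families of basic products $U_iU_i$ and $s_is_{i+1}$, and for the converse observes that, given matching idempotents, the only remaining basic products to rule out are $s_{i-1}U_i$ and $U_is_i$ (shown to bound no correctly-oriented Maslov-index-one triangle). Your argument instead leans on the length and $\tilde m$ gradings to exclude spurious triangles; that works, but the paper's reduction to those two explicit vanishing products is cleaner and worth incorporating.
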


		\begin{proof}
			(a) follows from Figure~\ref{proof4}. (d) follows directly from Proposition~\ref{fukDef8}(a), and (b) then follows from Figures~\ref{proof12} and~\ref{proof72}. There are two things to note about Figures~\ref{proof12} and~\ref{proof72}. First is that we are grossly abusing notation by denoting by $\tilde{b}_{\bullet}$ the respective projections in the original manifold $M$ (along with shadows of the coresponding element of $\lLag(2) = S^1$. Second is that the copy of  $S^1$ which forms $\lLag(2)$ is actually the quotient of $S^1$ under the action of $\{\pm 1\}$; therefore, we should actually be considering the \emph{quotients} of the curves pictured on the right of Figures~\ref{proof12} and~\ref{proof72} under the $\{\pm 1\}$-action. However, the curves pictured below convey the same information, and are easier to draw.
			
		\begin{figure}[H]
			\includegraphics[width = 10cm]{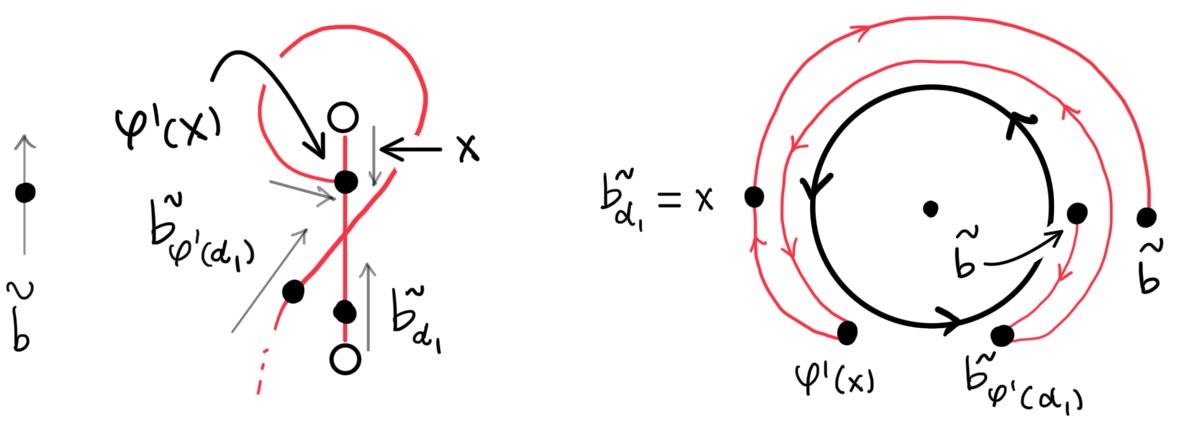}
			
			\caption{}\label{proof12} 
		\end{figure}
		
		\begin{figure}[H]
			\includegraphics[width = 10cm]{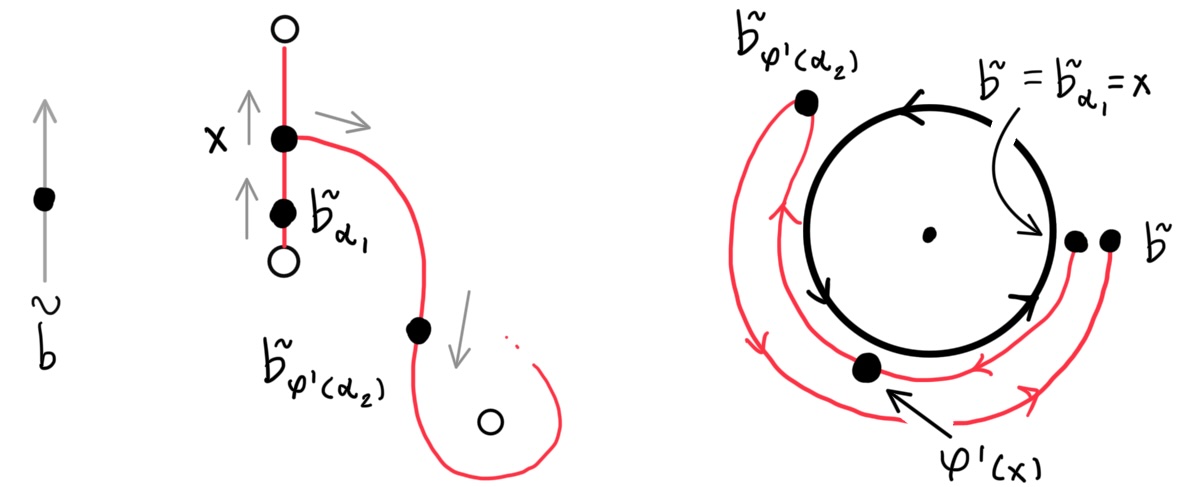}
			
			\caption{}\label{proof72}
		\end{figure}

			For (c), we need to verify first that all the multiplications from $\A_0$ correspond to $\mu_2$ operations on $\tilde{\A}$, and then the converse. For the first direction, the fact that multiplication on $\tilde{\B}$ is associative implies that we only need to verify that the multiplications match up in the case of the basic multiplications. That is, we need only show that for each $1 \leq i \leq N$, there exist unique (correctly oriented) triangles with
				\begin{itemize}
					\item Vertices $U_i, U_i$, and $U_i^2$, and edges on $\alpha_i, \alpha_i^1,$ and $\alpha^2$;
					\item $s_i, s_{i + 1}$ and $s_{i (i + 2)}$ and edges on $\alpha_i$, $\alpha_{i + 1}^1$, and $\alpha_{i + 2}^2$;
				\end{itemize}
				up to the identifications above, and with all indices counted $\emm N$. These triangles are pictured (for $N = 3, i = 1$) in Figure~\ref{proof13}.
				
		\begin{figure}[H]
			\includegraphics[width = 6.5cm]{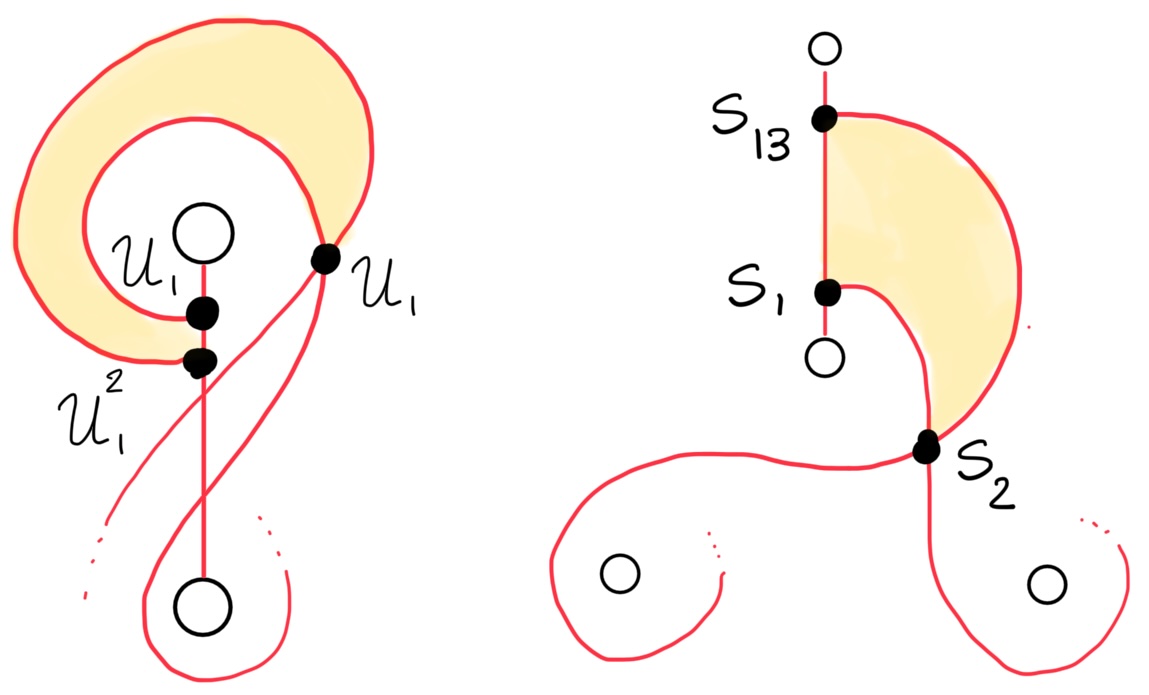}
			
			\caption{}\label{proof13}
		\end{figure}
		
		 Hence, every non-zero $\mu_2$ in $\A_0$ corresponds to a non-zero multiplication on $\tilde{\A}$. The last step is to show that these are the \emph{only} non-zero $\mu_2$'s on $\tilde{\A}$. To do this, note that if $a b \neq 0$ in $\tilde{\A}$, then the final idempotent of $a$ matches the initial idempotent of $b$. The options (for basic multiplication) which are not already covered are $s_{i - 1} U_{i}$ and $U_i s_i$ (with all indices counted $\emm N$). But these cannot be connected by any Maslov index one, correctly oriented triangle with the correct edges (which are spun correctly), as illustrated in Figure~\ref{proof17} (for the generic case $i = 1$). 
		 \end{proof}
		 \begin{figure}[H]
			\includegraphics[width = 6.5cm]{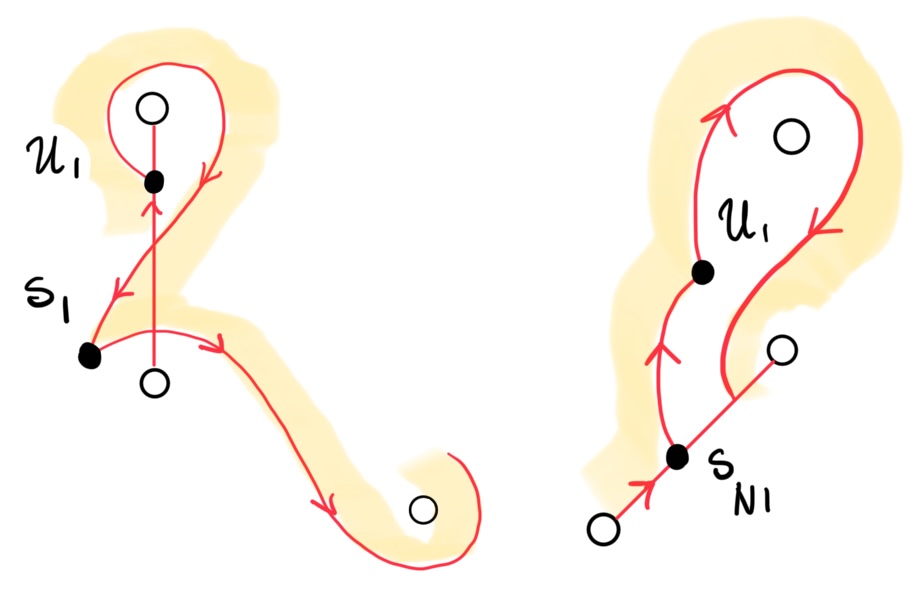}
			
			\caption{}\label{proof17}
		\end{figure}

		The next step is to verify

		\begin{lemma}\label{end2}
			\begin{enumerate}[label = (\alph*)]
				\item There are no non-trivial operations $\mu_n$ on $\tilde{\A}$, with $2 < n < 2N$;
				
				\item There are exactly $2N$ distinct $\mu_{2N}$ operations on $\tilde{\A}$, which are precisely those from~\eqref{uni36} of Proposition~\ref{uni1};
			\end{enumerate}
		\end{lemma}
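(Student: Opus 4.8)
The plan is to prove both parts by a direct count of the holomorphic $(n+1)$-gons that define $\mu_n$ on $\tilde{\A}$, exactly as in the model calculations behind Lemma~\ref{end1}: such a polygon is an immersed $(n+1)$-gon in the surface $M$ with right-handed corners at the prescribed intersection points and boundary running along $\varphi^n(\alpha_{i_0}), \varphi^{n-1}(\alpha_{i_1}), \ldots, \varphi^1(\alpha_{i_{n-1}}), \alpha_{i_n}$ in cyclic order, and in a surface these are rigid combinatorial objects. Throughout I would use the grading constraint from Proposition~\ref{fukDef8}(a) and~\eqref{fukDef4} (for $\G = \Z$, $\lambda = 1$), namely $\tilde m(\mu_n(x_1, \ldots, x_n)) = \sum_i \tilde m(x_i) + (n-2)$, together with Lemma~\ref{end1}(b), which says each basic generator $U_i, s_i$ has $\tilde m = 0$, and the length grading $\ell$, which is additive on $\tilde{\A}$ with $\ell(U_i) = \ell(s_i) = 1$ and $\ell(V_0) = 2N$. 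A preliminary reduction handles non-basic inputs: if some $a_i$ is a higher intersection point, it is one of the $q_{ij}^m, r_{ii}^m$ already fixed by the identification, and if some $a_i$ carries a power of $V_0$ or $V_{N+1}$ then the Maslov/length bookkeeping on the output pushes $n$ out of the range $2 < n \leq 2N$; so it suffices to treat $a_1, \ldots, a_n$ all equal to $U$'s and $s$'s, in which case the output has $\tilde m = n-2$ and $\ell = n$.

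The geometric heart of the argument is a winding claim: any $(n+1)$-gon contributing to a $\mu_n$ with $n > 2$ must wrap with a positive integral winding number $j \geq 1$ around the boundary circle $C_{N+1}$ (the one toward which the $q$-points radiate), and one full turn forces the boundary to pass the $N$ arcs $\alpha_1, \ldots, \alpha_N$ and their perturbations in cyclic order, contributing exactly $2N$ edges and $2N$ input corners, so that $n = 2Nj$. This is precisely the local picture of Figures~\ref{proof4} and~\ref{proof13} read cyclically all the way around $C_{N+1}$: a polygon of winding number $0$ is null-homotopic rel boundary, and by the classification of complementary regions of the wrapped $\alpha$-arc arrangement near the center together with the positivity-of-corners checks of the type in Figure~\ref{proof17}, such a polygon can only be a constant, a unit bigon, or one of the $\mu_2$-triangles already counted in Lemma~\ref{end1}(c). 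Since $n = 2Nj$ has no solution with $2 < n < 2N$, this proves part (a).

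For part (b), $n = 2N$ forces winding number $j = 1$, and then the region is pinned down up to cyclically rotating which of its $2N+1$ corners is the output: reading the corners around $C_{N+1}$ gives exactly $U_1, s_1, U_2, s_2, \ldots, U_N, s_N$ in cyclic order, with the remaining corner the unique intersection point of the relevant $\Hom(\alpha_{i_0}, \alpha_{i_0})$ having $\tilde m = 2N-2$ and $\ell = 2N$, which is the idempotent component of $V_0$ predicted by~\eqref{uni36}. Each such polygon is embedded, hence contributes with coefficient $1$ over $\F_2$, the $2N$ cyclic choices of input sequence yield the $2N$ distinct operations of~\eqref{uni36} (landing in the various $\Hom(\alpha_i,\alpha_i)$, i.e.\ the various idempotent components of $V_0$), and no polygon with $4 \leq n+1 \leq 2N+1$ corners other than these can close up, so there are no other nonzero $\mu_{2N}$'s.

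I expect the main obstacle to be making the winding claim and the accompanying orientation bookkeeping fully rigorous: one must argue that every contributing $(n+1)$-gon with right-handed corners genuinely sweeps an integral number of full turns around $C_{N+1}$ — so that no "partial" polygon with $4 \leq n+1 < 2N+1$ corners can close up — and that winding in the wrong direction is excluded by positivity of corners, which is the same kind of check performed in Figure~\ref{proof17} for the forbidden $\mu_2$'s but now carried out uniformly for all intermediate $n$. Once the local models of Figures~\ref{proof4} and~\ref{proof13} are in hand and the complementary regions near the center are enumerated, the rest of the count is routine.
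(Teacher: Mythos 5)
Your strategy for part (a) is considerably harder than what is needed, and the winding claim that it rests on is exactly the gap you flag. The paper's proof of (a) is a pure grading count: by Lemma~\ref{end1}(b) every generator of $\tilde{\A}$ has $\tilde m = 0$, so (since $\tilde m(V_0) = 2N-2$) every element of $\tilde{\A}$ has $\tilde m$ a nonnegative multiple of $2N-2$; the constraint $\tilde m(\mu_n(a_1,\ldots,a_n)) = \sum_i \tilde m(a_i) + n - 2$ from~\eqref{fukDef4} then forces $n - 2$ to be a multiple of $2N-2$, which has no solution with $2 < n < 2N$. This requires no reduction to basic inputs and no control of the contributing polygons whatsoever. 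Your ``preliminary reduction'' already invokes this grading bookkeeping, which in fact settles (a) outright, so the winding-number claim (that every $(n+1)$-gon with $n > 2$ winds a positive integral number of times around $C_{N+1}$ with exactly $2N$ corners per turn, and the classification of winding-zero regions) is both unproved as you note and unnecessary for this part. For (b), your approach matches the paper's: exhibit the $(2N+1)$-gons (the paper's Figure~\ref{proof6}, your reading-off of corners around $C_{N+1}$), observe they are rigid and land in the idempotent components of $V_0$, and argue uniqueness. That uniqueness step is where genuine geometric work is needed, and the paper also leaves it somewhat terse (``after noting that these are in fact the unique $(2N+1)$-gons\ldots''). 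So the geometric care you anticipate belongs in (b), not (a), and you should replace your winding argument for (a) with the direct grading computation.
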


		\begin{figure}[H]
				\includegraphics[width = 12cm]{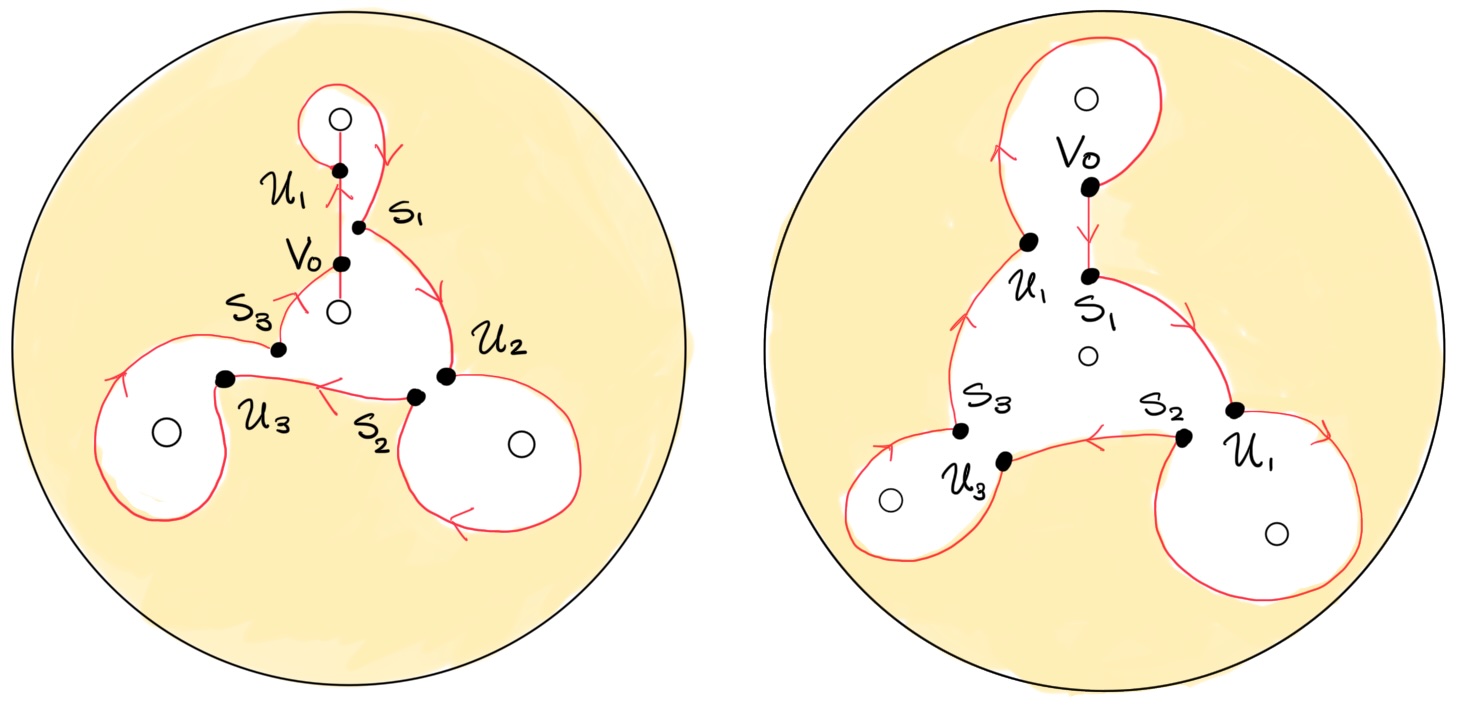}
			
				\caption{}\label{proof6}
		\end{figure}
		
		\begin{proof}
			(a) follows for grading reasons as in the verification of the analogous fact for $\A$. To reiterate, we know from Proposition~\ref{end1}(a) that $\tilde{m}(a) = 0$ for each $a \in \tilde{A}$; hence, by Proposition~\ref{fukDef8}(a), $\tilde{m}$ and any operation on $\tilde{A}$ must satisfy
			\[
				\tilde{m}(\mu_n(a_1, \ldots, a_n)) = \sum_{i = 1}^n \tilde{m}(a_i) + n - 2 = n - 2.
			\]
			However, no element of $\tilde{\A}$ has grading $n - 2$ for $2 < n < 2N - 2$. 
			
			(b) follows directly from the two pictures in Figure~\ref{proof6}, up to cyclic permutation of labels. After noting that these are in fact the unique $(2N + 1)$-gons with boundary on the desired $\alpha$-arcs the proof is complete. 
		\end{proof}

		For $\B_0$, the steps are analogous. For each $1 \leq i \leq N$, let $\tilde{\beta}_i$ be a perturbation of $\beta_i$ such that the points of $\tilde{\beta}_i \cap C_0$ are slightly clockwise of the corresponding point of $\beta_i \cap C_0$. Define $\beta_i^j = \varphi^j(\tilde{\beta}_i)$, and define
		\begin{equation}\label{proof7}
			\tilde{q}_{ii}^0 := \beta_i \cap \tilde{\beta}_i,
		\end{equation}
		and
		\begin{equation}\label{proof8}
			\beta_i \cap \beta_i^1 = \{\tilde{q}_{ii}^n, \tilde{r}_{ii}^n\}_{n = 1}^{\infty},
		\end{equation}
		and for $i \neq j$,
		\begin{equation}\label{proof3}
			\beta_i \cap \beta_j^1 = \{\tilde{q}_{ij}^n\}_{n = 1}^{\infty}
		\end{equation}
		The identification between elements of $\tilde{\B}$ and elements of $\B_0$ is as follows
		\begin{align*}
			I_i &\leftrightarrow \tilde{q}_{ii}^0; \quad \rho_i^n \leftrightarrow \tilde{r}_{ii}^n \\
			s_{ii}^n &\leftrightarrow q_{ii}^n \quad s_{ij} s_{jj}^n \leftrightarrow q_{ij}^{n + 1} \text{ for } i \neq j;
		\end{align*} 
		These points are pictured (labelled by the corresponding element of $\B_0$) in Figure~\ref{proof9}.
		
		\begin{figure}[H]
			\includegraphics[width = 10cm]{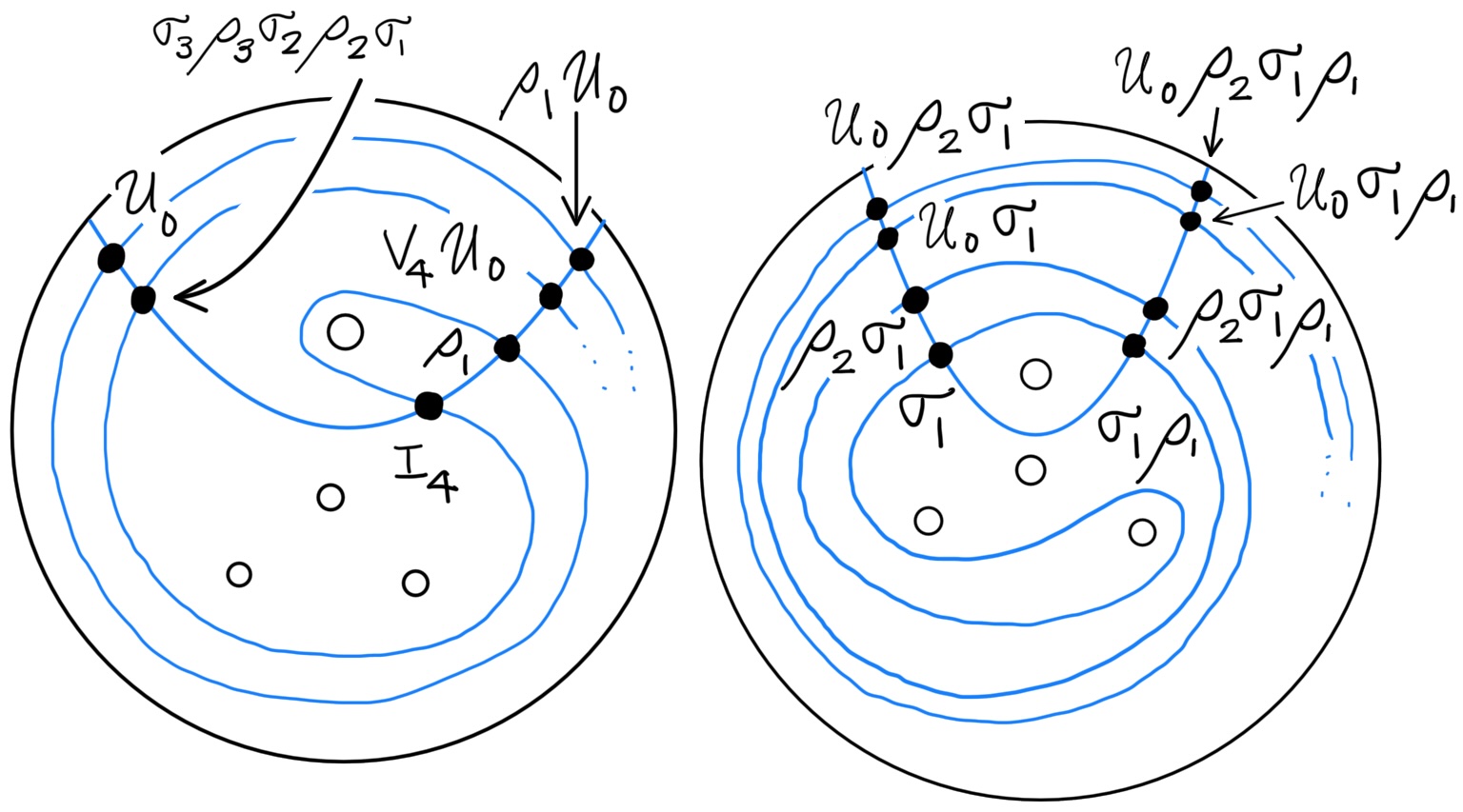}
			
			\caption{}\label{proof9}
		\end{figure}
		
		Define a $\Z_{\geq 0}$-valued length grading $\ell$ on $\tilde{\B}$ according to the same three rules as for $\tilde{\A}$, that is:
		\begin{enumerate}[label = (\roman*)]
			\item $\ell(I_i) = \ell(\rho_i) = \ell(\sigma_i) = 1$ for each $i$;
			
			\item $\ell(V_{N + 1}) = N$;
			
			\item We also require that higher operations satisfy
			\[
				\ell(\mu_n(b_1, \ldots, b_n) ) = \sum_{i = 1}^n \ell(b_i).
			\]
		\end{enumerate}

		\begin{lemma}\label{end3}
			\begin{enumerate}[label = (\alph*)]
				\item The identifications above define a bijection between $\tilde{\B}$ and $\B_0$; 
			
				\item Each $b = \rho_i, \sigma_i$ has $\tilde{m}(b) = -1$;

				\item The non-zero $\mu_2$ operations on $\tilde{\B}$ are in one-to-one correspondence with the non-zero $\mu_2$ operations on $\B$.

				\item These non-trivial $\mu_2$ operations preserve $\tilde{m}$ in the sense that 
				\[
					\tilde{m}(\mu_2(b_1, b_2)) = \tilde{m}(b_1) + \tilde{m}(b_2).
				\]
			\end{enumerate}
		\end{lemma}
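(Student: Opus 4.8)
The plan is to establish Lemma~\ref{end3} by the same four-part model-calculation strategy used for Lemma~\ref{end1}, treating $\tilde{\B}$ as the $\beta$-side mirror of the $\alpha$-side computation. Part (a) is essentially definitional: the identifications between the $\tilde{q}_{ii}^0$, $\tilde{r}_{ii}^n$, and $\tilde{q}_{ij}^n$ and the generators $I_i$, $\rho_i$, $s_{ij}s_{jj}^n$ of $\B_0$ are read off directly from Figure~\ref{proof9}, exactly as part (a) of Lemma~\ref{end1} was read off Figure~\ref{proof4}. One just needs to check that every intersection point of $\beta_i \cap \beta_j^1$ (for all $i,j$) is accounted for exactly once, which follows from the explicit description of the intersection patterns in~\eqref{proof7},~\eqref{proof8}, and~\eqref{proof3}.

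For part (d), the grading-preservation statement for $\mu_2$, I would invoke Proposition~\ref{fukDef8}(a), which gives $\tilde{m}(\mu_n(b_1,\ldots,b_n)) = \sum_i \tilde{m}(b_i) + n - 2$; specializing to $n = 2$ gives exactly the claimed additivity $\tilde{m}(\mu_2(b_1,b_2)) = \tilde{m}(b_1) + \tilde{m}(b_2)$. This is immediate and does not even require any figures. Part (b), that $\tilde{m}(\rho_i) = \tilde{m}(\sigma_i) = -1$, is the analogue of Lemma~\ref{end1}(b): it comes from an explicit computation of $\gr$ using the formula~\eqref{fukDef11} for these specific generators, tracking the $\Z$-component through the anchor paths, the $\epsilon$-paths along $\tilde{\beta}_i$, the $\eta$-path from the Hamiltonian wrapping, and the $\gamma$-path turning $T_x\beta_i^1$ to $T_x\beta_i$ (or $\beta_j$). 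The key point is that, because $m$ on $\B_0$ assigns $-1$ (not $0$) to $\rho_i$ and $\sigma_i$, the $\gamma$-rotation picks up the appropriate half-turn; this would be displayed in figures analogous to Figures~\ref{proof12} and~\ref{proof72}, with the same caveats about working in the quotient $\lLag(2) = S^1/\{\pm 1\}$ and abusing notation by drawing shadows in $M$.

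Part (c) is the substantive one, and I expect it to be the main obstacle. As in Lemma~\ref{end1}(c), the argument has two directions. For the forward direction, associativity of composition in $\tilde{\B}$ reduces the claim to checking the basic multiplications: one must exhibit, for each $1 \leq i \leq N$, unique correctly-oriented Maslov-index-one triangles realizing $I_i\rho_i = \rho_i$, $I_i\sigma_i = \sigma_i$, $\rho_i I_i = \rho_i$, $\sigma_i I_{i-1}$-type relations, $\rho_i\sigma_i$ when $i = j$, and $\sigma_i\rho_j$ when $i = j - 1$ (all indices mod $N$), with edges on the appropriate $\beta_i$, $\beta_{i+1}^1$, $\beta_{i+2}^2$, matching the identifications; these triangles would be drawn in a figure analogous to Figure~\ref{proof13}. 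For the converse, one shows these are the only non-zero $\mu_2$s: any non-vanishing product forces the final idempotent of the first factor to match the initial idempotent of the second, and the remaining a priori possibilities — the $\beta$-analogues of $s_{i-1}U_i$ and $U_i s_i$, namely products like $\sigma_i\sigma_j$, $\rho_i\rho_j$, $\rho_i\sigma_j$ with $i \neq j$, and $\sigma_i\rho_j$ with $i \neq j-1$ — cannot be filled by any correctly-spun, correctly-oriented Maslov-index-one bigon or triangle, as in Figure~\ref{proof17}. The delicate part throughout is bookkeeping: keeping the wrapping indices $\varphi^j$ consistent, ensuring the $(-1)$ Maslov gradings of $\rho_i,\sigma_i$ are compatible with the triangle counts, and confirming that no spurious higher-index polygons sneak in — but since the $\beta$-side intersection combinatorics is strictly simpler than the $\alpha$-side (there is only one family of higher operations, $\mu_N$, rather than the $\mu_{2N}$ family), the case analysis is shorter than for Lemma~\ref{end1}.
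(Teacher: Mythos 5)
Your proposal mirrors the paper's proof exactly: part (a) is read off Figure~\ref{proof9}, part (d) is immediate from Proposition~\ref{fukDef8}(a) at $n=2$, part (b) is the grading computation analogous to Figures~\ref{proof12} and~\ref{proof72}, and part (c) reduces by associativity to exhibiting Maslov-index-one triangles for the basic products (Figure~\ref{proof14}) and then ruling out the remaining products by the spinning argument analogous to Figure~\ref{proof17}. No gaps; same approach.
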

		
		\begin{proof}
			(a) follows from Figure~\ref{proof9}. (d) again follows from Proposition~\ref{fukDef8}, and (b) follows from calculations analogous to the ones done in Figures~\ref{proof12} and~\ref{proof72} for $\tilde{\A}$.

			For (c), we want to show that non-zero $\mu_2$ on $\B$ implies non-zero $\mu_2$ on $\tilde{\B}$, and then the reverse. In more detail, the forward direction says that if $b_1, b_2 \in \B$ so that $b_2 b_1 \neq 0$, and we let $i_1, i_0$ denote the initial and final idempotents of $b_1$, respectively, and let $i_2$ denote and the initial idempotent of $b_2$, then we want to find a unique, Maslov-grading 1, triangle with vertices $b_2, b_1,$ and $b_2b_1$, and with boundary along $\beta_{i_0}, \beta_{i_1}^1$, and $\beta_{i_2}^2$. In fact, because multiplication on $\tilde{\B}$ is associative, we only need to verify it for the basic elements of $\B_+$, that is, the $\sigma_i$ and $\rho_i$. For the basic elements 
			\begin{itemize}
				\item $b_1 = \rho_1, b_2 = \sigma_1$;
				
				\item $b_1 = \sigma_1, b_2 = \rho_2$;
			\end{itemize}
			these triangles are pictured in Figure~\ref{proof14}. For all other multipliable pairs of basic elements in $\B_+$, the triangles are analogous. Now, we just need to verify that given a Maslov index 1 triangle with vertices $b_2, b_1,$ and $b_3$, and boundary along $\beta_{i_0}, \beta_{i_1}^1$, and $\beta_{i_2}^2$, $b_2 b_1 \neq 0$ in $\B$ and is equal to $b_3$. The argument that these are the \emph{only} non-zero $\mu_2$'s in $\tilde{\B}$ is analogous to the case of $\tilde{\A}$ (but with order of multiplication reversed).
		\end{proof}
		
		\begin{figure}[H]
			\includegraphics[width = 8cm]{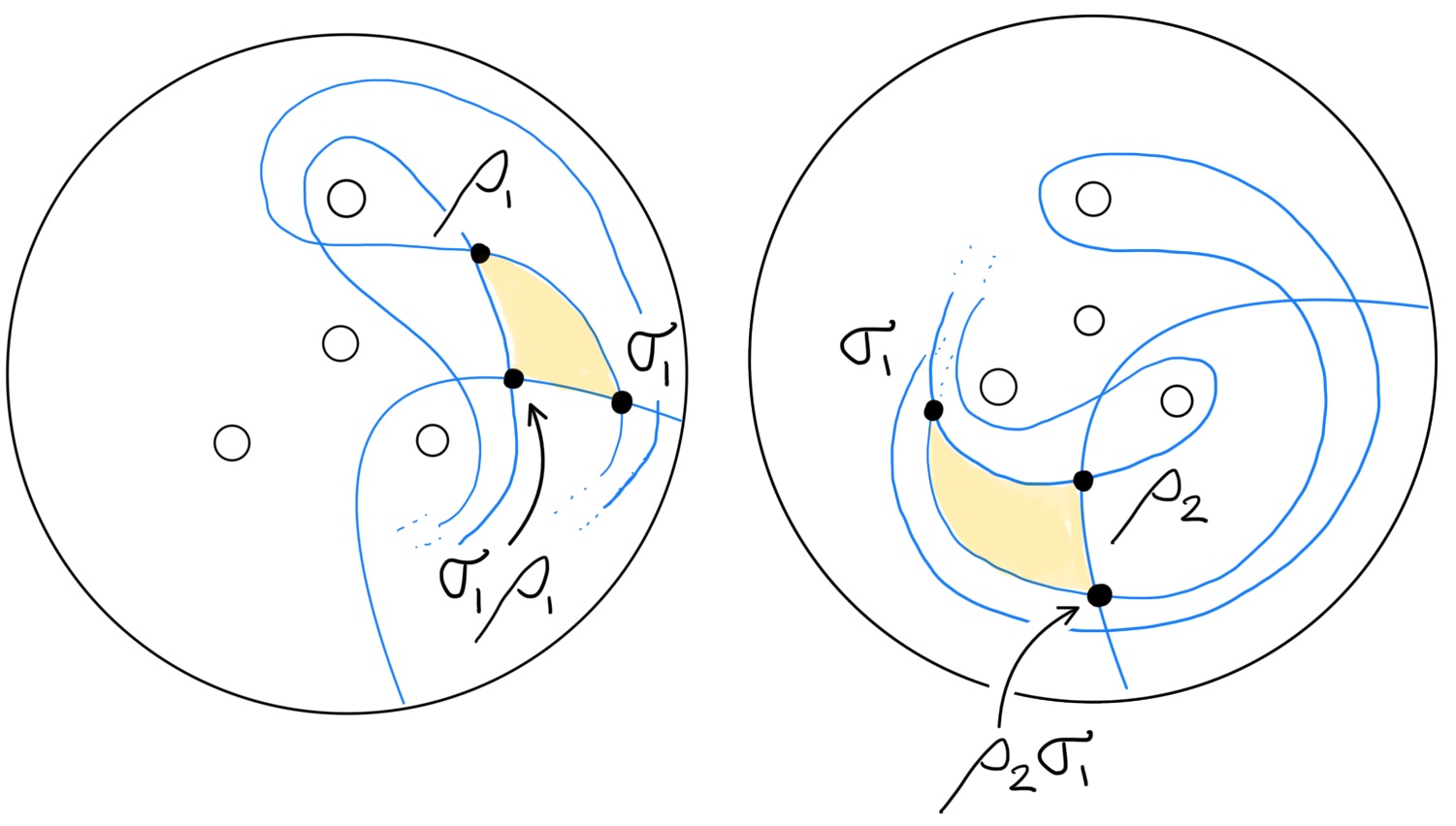}
			
			\caption{}\label{proof14} 
		\end{figure}

		\begin{lemma}\label{end4}
			\begin{enumerate}[label = (\alph*)]
				\item There are no non-trivial operations $\mu_n$ on $\tilde{\A}$, with $2 < n < N$;
				
				\item There are exactly $N$ distinct non-extended $\mu_{N}$ operations on $\tilde{\A}$, which are precisely those from~\eqref{uni37} of Proposition~\ref{uni1};
			\end{enumerate}
		\end{lemma}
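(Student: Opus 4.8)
The plan is to run the $\beta$-side analogue of the proof of Lemma~\ref{end2}, so throughout I write $\tilde{\B} = \End_{\W(M)}(\beta_1 \oplus \cdots \oplus \beta_N)$ for the algebra in question. Part~(a) should fall out of a combined Maslov/length count. By Lemma~\ref{end3}(b) every generator $\rho_j, \sigma_j$ of $\tilde{\B}$ has $\tilde m = -1$, and by Lemma~\ref{end3}(d) the grading $\tilde m$ is additive under $\mu_2$; combined with rules (i)--(iii) for $\ell$ on $\tilde{\B}$ (so $\ell(\rho_j) = \ell(\sigma_j) = \ell(I_j) = 1$ and $\ell(V_{N+1}) = N$), this forces every nonzero $b \in \tilde{\B}$ with $\ell(b) = L$ to have the form $b_0 V_{N+1}^{k}$ with $\tilde m(b) = -L_0 - 2k$, where $L_0 = \ell(b_0)$ and $L = L_0 + kN$. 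On the other hand, Proposition~\ref{fukDef8}(a) gives $\tilde m(\mu_n(b_1,\dots,b_n)) = \sum_i \tilde m(b_i) + n - 2$, and rule (iii) gives $\ell(\mu_n(b_1,\dots,b_n)) = \sum_i \ell(b_i)$. Using $R$-multilinearity to reduce to the case where each $b_i$ is a generator (idempotent inputs being killed by unitality, or again by the grading count), a nonzero $\mu_n$ with $n > 2$ would produce an element of $\tilde{\B}$ of $\ell$-value $n$ and Maslov grading $-2$; but $L_0 + 2k = 2$ together with $L_0 + kN = n$ has no solution for $2 < n < N$, and its only solution for $n = N$ is $(L_0,k) = (0,1)$. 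This proves~(a), and shows that any nonzero $\mu_N(b_1,\dots,b_N)$ on generators has output $V_{N+1}$ in the appropriate Alexander grading.

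For part~(b) it remains to carry out the geometric count, exactly as for Lemma~\ref{end2}(b) but with a figure analogous to Figure~\ref{proof6}. One shows that for each $1 \le i \le N$ there is a unique, correctly oriented, immersed (indeed embedded) $(N+1)$-gon whose $N+1$ successive edges run along suitably wrapped copies of $\beta_i, \beta_{i+1}, \dots, \beta_{i+N-1}$ and then $\beta_i$ again (in the composition convention of Section~\ref{fukayaDef}, and with the wrapping chosen up to the Hamiltonian isotopy permitted by Proposition~\ref{fukDef8}(b)), with input vertices at the points labelled $\sigma_i, \sigma_{i+1}, \dots, \sigma_{i+N-1}$ (indices $\bmod N$) and output vertex at $V_{N+1}$, and that no other $(N+1)$-gon with boundary on the $\beta$-arcs contributes. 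The picture is drawn in a neighbourhood of the common boundary circle $C_0$, where all $N$ relevant $\beta$-arcs meet: there one draws the successively wrapped copies and observes that they bound a single embedded polygon, the cyclic relabelling $i \mapsto i+1$ accounting for the factor of $N$. Since each of these $N$ polygons has output exactly $V_{N+1}$, rather than $b' V_{N+1}$ or $V_{N+1} b'$, they are precisely the $N$ non-extended $\mu_N$-operations of~\eqref{uni37}.

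The main obstacle I anticipate is this last uniqueness claim. One must check that, after the chosen perturbation $\tilde\beta_i$ and wrapping, exactly one immersed $(N+1)$-gon closes up, that it is embedded, and---crucially---that no polygon carrying a boundary puncture (which would output an extended element $b' V_{N+1}$ or $V_{N+1} b'$) or of higher multiplicity contributes. As in the $\alpha$-case this is cleanest to handle by passing to a model neighbourhood of $C_0$, where the count is pinned down by the cyclic combinatorics of the marked points on the boundary circles together with the Maslov constraint $\tilde m(\mu_N(\dots)) = -2$ from part~(a), which fixes the output and excludes the extended variants. A minor bookkeeping point, needed so that the sieve in~(a) genuinely closes off the window $2 < n < N$, is to record that $\tilde m(V_{N+1}) = -2$, i.e.\ that $\tilde m(\mu_N(\sigma_{i+N-1}, \dots, \sigma_i)) = N \cdot (-1) + N - 2$, which is immediate from Proposition~\ref{fukDef8}(a) and~\eqref{uni37}.
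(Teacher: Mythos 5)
Your part~(a) arrives at the same Diophantine obstruction as the paper, but the reduction ``Using $R$-multilinearity to reduce to the case where each $b_i$ is a generator'' is not justified: $R$-multilinearity lets you strip scalars in $R = \F_2[V_0, V_{N+1}]$, not decompose a length-two input of $\tilde{\B}$ into generators. The paper avoids this by working with the basic factors $\tau_1, \ldots, \tau_k$ of \emph{all} the inputs $b_1, \ldots, b_n$ at once; writing $j$ for the number of $V_{N+1}$-factors appearing in the output and observing that these absorb $jN$ of the $\tau_i$, it reduces~\eqref{proof10} to $(N-2)j = n-2$ without supposing the $b_i$ are generators. Your equations $L_0 + 2k = 2$ and $L_0 + kN = n$ are the special case where every input has length one; tracking $V_{N+1}$-exponents on both sides recovers the paper's equation and makes the reduction unnecessary.

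The genuine gap is in part~(b). Your plan -- a direct polygon count in a model neighbourhood of $C_0$ -- is not the paper's argument, and you yourself flag its hardest step (showing no polygon with boundary punctures, $\rho$-inputs, or higher multiplicity contributes) without supplying it. Cyclic combinatorics together with the Maslov constraint $\tilde m(b_0) = -2$ pin down $\ell(b_0) = N$ and hence $b_0 = V_{N+1}$, but they cannot distinguish $\rho_j$ from $\sigma_j$ as inputs: both have $\tilde m = -1$ and $\ell = 1$. The paper's key idea, absent from your sketch, is to invoke the full $G = \Z \times F_{N+1}$-grading of Section~\ref{fukaya2}: the calculations of Figures~\ref{proof73} and~\ref{proof74} show that the $F_{N+1}$-component of $\gr(\rho_i)$ is the nontrivial generator $\overline{i}$, while $\gr(\sigma_i)$ and $\gr(V_{N+1})$ both have trivial $F_{N+1}$-component. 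The grading constraint~\eqref{fukDef4} then forces every input to be some $\sigma_j$ (no cancellation being possible in the free group $F_{N+1}$), after which the idempotent conditions and the existence count of Figure~\ref{proof15} close the argument. Using only the $\Z$-component $\tilde m$, as you do, cannot supply this last step.
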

		
		\begin{proof}
			(a) follows for grading reasons; indeed, consider $\mu_n(b_1, \ldots, b_n) = b_0$ with $2 < n < N$ and $b_i \in \tilde{\B}$ for each $i$. Then
			\begin{equation}\label{proof10}
				\tilde{m}(b_0) = \sum_{i = 1}^n \tilde{m}(b_i) + n- 2 = n - 2 - \sum_{i = 1}^n \ell(b_i).
			\end{equation}
			Moreover, $\ell(b_0) = \sum_{i = 1}^n \ell(b_i)$. If $\ell(b_0) < N$, then $\tilde{m}(b_0) = - \ell(b_0)$, so that~\eqref{proof10} implies $n = 2$, which we are not considering; we can therefore exclude this case. If $\ell(b_0) \geq N$, then $b_0$ contains some $j$ factors of $V_{N + 1}$, which contribute $-2j$ to $\tilde{m}(b_0)$, and $+j$ to $\ell(b_0)$.  
			
			Write $\tau_1, \ldots \tau_k \in \B_+$ for the basic factors of $b_1, \ldots b_n$. Arguing using the fact that $\sum_{i = 1}^N \ell(b_i) = \ell(b_0)$, we see that if we temporarily ignore the $\tau_i$ which correspond to the factors of $b_0$, we are left with $jN$ basic factors. These $\tau_i$ contribute $-jN$ to the sum $\sum_{i = 1}^n \tilde{m}(b_i)$. The equality of~\eqref{proof10} now reduces to:
			\begin{equation}\label{proof11}
				-2j = -jN + n - 2. 
			\end{equation}
			We are assuming $n < N$, which means we have
			\[
				j = \frac{n - 2}{N - 2} < 1,
			\]
			which contradicts $j \in \N$. Hence, (a) follows.
			
			For (b),~\eqref{proof11} likewise shows that if $n = N$, then we need $j = 1$. Because we are only interested in the generating  Hochschild cocycle -- that is, the unextended $\mu_N$ operation -- we want to consider the case when 
			\begin{equation}\label{proof75}
				\ell(b_0) = \sum_{i = 1}^N \ell(b_i) = N,
			\end{equation}
			and show that the $\mu_N$ must be of the form
			\begin{equation}\label{proof16}
				\mu_{N} (  \sigma_{i + N - 1}, \sigma_{i + N - 2}, \ldots, \sigma_{i + 1}, \sigma_i) = V_{N + 1},
			\end{equation}
			where the indices are counted $\emm N$ as usual.  In general, because we are only considering the case where~\eqref{proof75} holds, and all basic elements of $\tilde{\B}$ have length $1$, we have $b_0 = V_{N + 1}$. It remains to show that 
			\[
				(b_1, \ldots, b_N) = (  \sigma_{i + N - 1}, \sigma_{i + N - 2}, \ldots, \sigma_{i + 1}, \sigma_i),
			\]
			for some $1 \leq i \leq N$. Arguing using~\eqref{proof75} again, we know that each $b_i$ is a length $1$ element, and is therefore one of the $\rho_j$ or $\sigma_j$. Because we are only looking at sequences $(b_1, \ldots, b_n)$ so that the initial idempotent of $b_i$ is the final idempotent of $b_{i - 1}$ for each $i$, it will suffice to show that each $b_i$ must be \emph{some} $\sigma_j$.
			
			We do this by considering the canonical $\mu_N$ operation and its corresponding $(N + 1)$-gon. That there exists such a (unique) non-trivial $\mu_N$ with the desired input and output follows directly from Figure~\ref{proof15} (and the analogous figures for cyclic permutations of $\{1, \ldots N\}$ and arbitrary $N$).
		\begin{figure}[H]
			\includegraphics[width = 5cm]{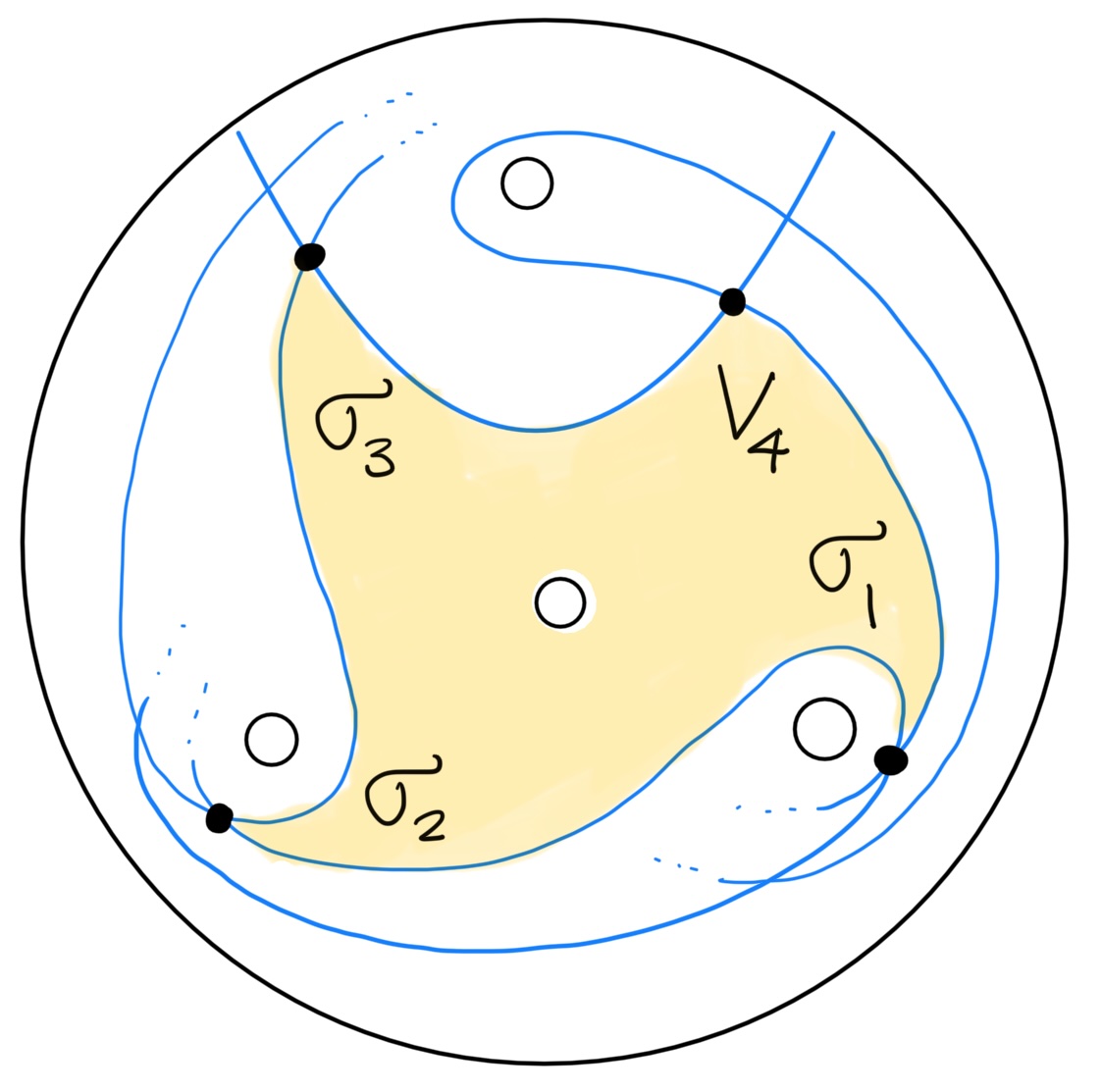}
			
			\caption{}\label{proof15}
		\end{figure}
			\noindent Moreover, the calculations in Figures~\ref{proof73} and~\ref{proof74} show that the $F_{N + 1}$-components of $\gr(\rho_i)$ and $\gr(\sigma_i)$ are $\overline{i}$ and $1$, respectively.
			
			\begin{figure}[H]
				\includegraphics[width = 9cm]{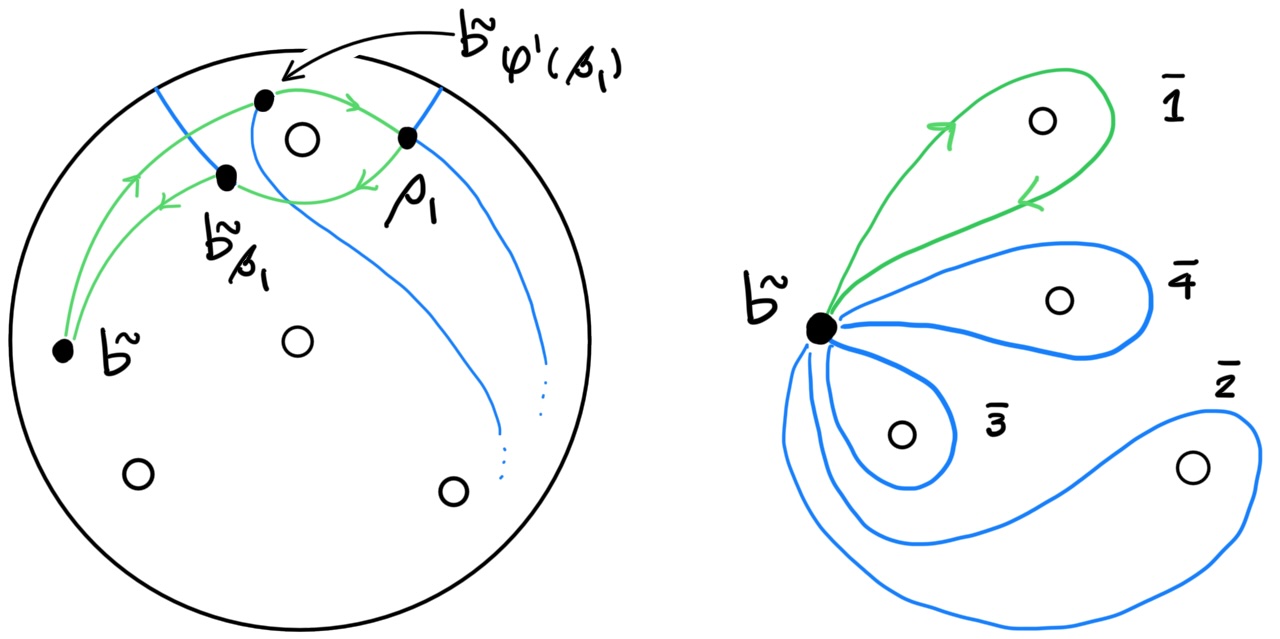}
			
				\caption{}\label{proof73}
			\end{figure}
		
			\begin{figure}[H]
				\includegraphics[width = 9cm]{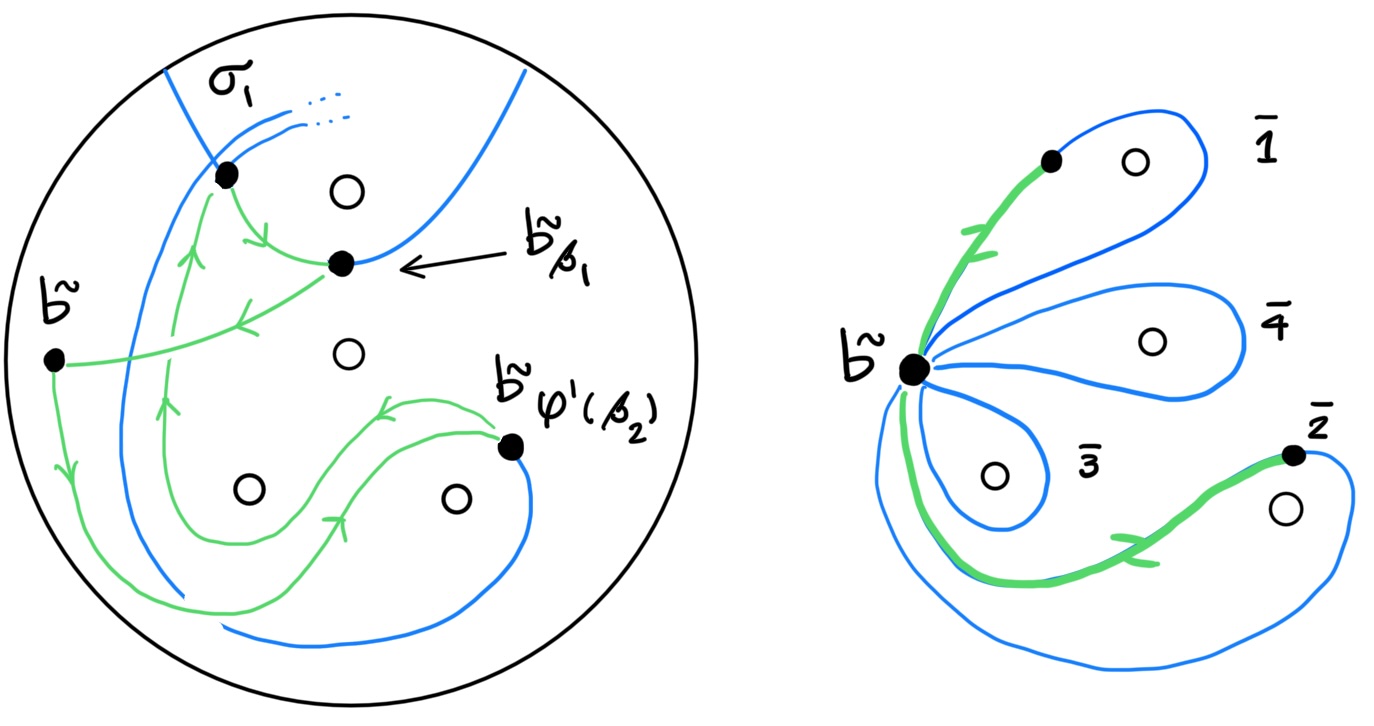}
			
				\caption{}\label{proof74}
			\end{figure}
			
			\noindent Combined with Figure~\ref{proof15} and Proposition~\ref{fukDef8}(a), Figure~\ref{proof74} shows that the $F_{N + 1}$-component of $\gr(V_{N + 1})$ is equal to $1$. Therefore, by Figures~\ref{proof73} and~\ref{proof74} again, combined with Proposition~\ref{fukDef8}(a) as before, each $b_i$ must be one of the $\sigma_i$. Given the discussion of idempotents above, this suffices to verify that the operation is of the form in~\eqref{proof16}.
		\end{proof}

		 Lemmas~\ref{end1} through~\ref{end4} confirm that $\tilde{\A}$ and $\tilde{\B}$ satisfy the conditions of Proposition~\ref{uni1}. Applying Proposition~\ref{uni1}, we can now conclude that Theorem~\ref{fukaya} holds. 
		 
		 \newpage

\end{document}